\newlist{steps}{enumerate}{1}
\setlist[steps, 1]{label = Step \arabic*:}
\newtheorem{definition}{Definition}
\renewcommand{\leq}{\leqslant}
\renewcommand{\geq}{\geqslant}
\newcommand{\sa}{{\mathcal{C}}}
\newcommand{\p}{{\mathbb{P}}}
\newcommand{\e}{{\mathbb{E}}}
\newcommand{\bs}{{\boldsymbol{\sigma}}}
\newcommand{\bt}{{\boldsymbol{\tau}}}
\newcommand{\os}{{\overline{\sigma}_N}}
\newcommand{\cp}{{\mathscr{C}_p}}
\newcommand{\ma}{{\mathcal{A}}}
\newcommand{\mysquare}[1][black]{\small\textcolor{#1}{\ensuremath\blacksquare}}
\title[ML Estimation in the $p$-Spin Curie-Weiss Model]{Phase Transitions of the Maximum Likelihood Estimates in the $p$-Spin Curie-Weiss Model}
\author[Mukherjee]{Somabha Mukherjee$^\ast$}\thanks{$^\ast$The first two authors contributed equally to the paper.}
\address{Department of Statistics and Data Science, National Institute of Singapore, Singapore} 
\email{somabha@nus.edu.sg}
\author[Son]{Jaesung Son$^\ast$}
\address{Department of Statistics, Columbia University, New York, USA} 
\email{js4638@columbia.edu}
\author[Bhattacharya]{Bhaswar B. Bhattacharya}
\address{Department of Statistics and Data Science, University of Pennsylvania, Philadelphia, USA} 
\email{bhaswar@wharton.upenn.edu}
\begin{document}
	
	
	\keywords{Central limit theorems, Estimation, Ising models, Magnetization, Phase transitions, Spin-systems, Superefficiency.} 
	
	\begin{abstract} 
		In this paper we consider the problem of parameter estimation in the  $p$-spin Curie-Weiss model, for $p \geq 3$. We provide a complete description  of the limiting properties of the maximum likelihood (ML) estimates of the inverse temperature and the magnetic field given a single realization from the $p$-spin Curie-Weiss model, complementing the well-known results in the 2-spin case \cite{comets}. Our results unearth various new phase transitions and surprising limit theorems, such as the existence of a  `critical' curve in the parameter space, where the limiting distribution of the ML estimates is a mixture with both continuous and discrete components. The number of mixture components is either two or three,  depending on, among other things, the sign of one of the parameters and the parity of $p$. Another interesting revelation is the existence of certain `special' points in the parameter space where the ML estimates exhibit a superefficiency phenomenon, converging to a non-Gaussian limiting distribution at rate $N^{\frac{3}{4}}$. Using these results we can obtain  asymptotically valid confidence intervals for the inverse temperature and the magnetic field at all points in the parameter space where consistent estimation is possible. 
	\end{abstract}

	\maketitle

	\section{Introduction}\label{intro}

	The Ising model is a discrete random field, where the Hamiltonian has a quadratic term designed to capture pairwise interactions between neighboring vertices of a network. This model was initially studied more than a century ago as a model of ferromagnetism \cite{ising}, and has since then emerged as one of the fundamental mathematical tools for understanding interacting  spin systems on graphs.  Recently, the Ising model has also turned out to be an useful primitive for capturing pairwise dependence among binary attributes with an underlying network structure, which arise naturally in spatial statistics, social networks, computer vision, neural networks, and computational biology, among others (cf.~\cite{spatial,cd_trees,geman_graffinge,disease,neural,innovations} and the references therein).  However, in many situations, both in modeling interacting spin systems and in real-world network data, dependencies arise not just from pairs, but from interactions between groups of particles or individuals. This leads to the study of $p$-spin Ising models, where the Hamiltonian is a multilinear polynomial of degree $p\geq 2$, designed for capturing higher-order interactions between the different particles. As in the case of 2-spin models, the $p$-spin Ising model can be represented as a spin system on a $p$-uniform hypergraph, where the individual entities represent the vertices of the hypergraph and the $p$-tuples of interactions are indexed by the hyperedges.  Higher-order Ising models arise naturally in the study of multi-atom interactions in lattice gas models, which includes, among others, the square-lattice eight-vertex model, the Ashkin-Teller model, and Suzuki's pseudo-3D anisotropic model  (cf.~  \cite{ab_ferromagnetic_pspin,multispin_simulations,pspinref1,ising_suzuki,ising_general,ferromagnetic_mean_field,turban,pspinref2} and the references therein).  More recently, higher-order spin systems  have been proposed as effective and mathematically tractable models for simultaneously capturing both peer-group effects and individual effects in social networks \cite{cd_ising_II}. 
	

	Estimating the parameters of a spin system and understanding the asymptotic properties of the resulting estimates, have become increasingly important due the ubiquity of dependent network data in computer science, physics, probability, and  statistics. In this paper, we consider the problem of estimating the parameters of a $p$-spin Ising model given a single spin realization from the model. This problem has been extensively studied for the $p=2$ (2-spin) case, which includes, among others, the classical results on consistency and optimality of the maximum likelihood (ML) estimates for lattice models \cite{comets_exp,gidas,guyon,discrete_mrf_pickard}, and the seminal paper of Chatterjee \cite{chatterjee}, where general conditions for  $N^{\frac{1}{2}}$-consistency of the maximum pseudolikelihood estimate  (MPLE) \cite{besag_lattice,besag_nl} were derived. The results in \cite{chatterjee} were later extended in \cite{BM16} and \cite{pg_sm} to obtain rates of estimation for 2-spin Ising models on general weighted graphs and joint estimation of parameters, respectively. These techniques were recently  used in Daskalakis et al. \cite{cd_ising_I,cd_ising_II} to obtain rates of convergence of the MPLE in general logistic regression models with dependent observations. Very recently, Dagan et al. \cite{cd_ising_estimation} considered the problem of parameter estimation in a more general model where the binary outcomes can be influenced by various underlying networks, and, as a consequence, improved some of the results in \cite{BM16}.  Related problems in hypothesis testing given a single sample from the 2-spin Ising model are studied in \cite{gb_testing,ising_testing}. 
	However, none of these results  say anything about the limiting distribution of the estimates, and hence,  cannot be used for inferential tasks, such as constructing confidence intervals and hypothesis testing. In fact, proving general limit theorems in these models is often extremely difficult, if not impossible, because of the presence of an unknown normalizing constant (partition function) in the estimation objective function, which is both computationally and mathematically intractable for Ising models on general graphs. As a consequence, it is natural to assume certain special structures on the underlying network interactions if one desires to obtain precise results such as central limit theorems. A particularly useful structural condition which preserves several interesting properties of general systems, is to assume that all pairwise interactions between the nodes of the network are present. This is the Ising model on the complete graph, more commonly known as the 2-spin Curie-Weiss model \cite{dm_gibbs_notes,cw_stein_nonnormal,ellis_book,ellis,glauber_dynamics}, which has been extensively studied in probability and statistical physics, and provides the foundations of our understanding of mean-field models with pairwise interactions. In particular, Comets and Gidas \cite{comets} provided a complete description of the limiting distribution of the ML estimates of the parameters in the 2-spin Curie-Weiss model.

	The 2-spin Curie-Weiss model naturally extends to the $p$-spin Curie-Weiss model, for any $p \geq 2$, in which the Hamiltonian has all the possible $p$-tuples of interactions (the Ising model on a complete hypergraph). More precisely, given an inverse temperature $\beta \geq 0$ and a magnetic field $h \in \R$, the $p$-spin Curie-Weiss model is a spin system on $\sa_N := \{-1,1\}^N$, defined as:
	\begin{align}\label{cwwithmg}
		\mathbb{P}_{\beta,h,p}(\bs)  = \frac{ \exp \left\{ \frac{\beta}{N^{p-1}} \sum_{1 \leq i_1, i_2, \ldots, i_p \leq N} \sigma_{i_1} \sigma_{i_2} \cdots \sigma_{i_p} + h \sum_{i=1}^N \sigma_i \right \} }{2^{N} Z_N(\beta,h,p)} ~,
	\end{align}
	for $\bs :=(\sigma_1,\ldots,\sigma_N) \in \sa_N$.
	The normalizing constant, also referred to as the partition function, $Z_N(\beta,h,p)$ is determined by the condition $\sum_{\bs \in \sa_N}\mathbb{P}_{\beta,h,p}(\bs)
	\newblock=1$, that is, 
	\begin{align}\label{ptnepn}
		Z_N (\beta,h,p)  = \frac{1}{2^{N}} \sum_{\bs \in \sa_N}  \exp \left\{ \frac{\beta}{N^{p-1}} \sum_{1 \leq i_1, i_2, \ldots, i_p \leq N} \sigma_{i_1} \sigma_{i_2} \cdots \sigma_{i_p} + h \sum_{i=1}^N \sigma_i \right \} .
	\end{align}
	Denote by $F_N(\beta,h,p) := \log Z_N(\beta,h,p)$ the log-partition function of the model. Hereafter, we will often abbreviate $\mathbb{P}_{\beta,h,p}, Z_N(\beta,h,p)$, and $F_N(\beta,h,p)$, by $\mathbb{P}, Z_N$, and $F_N$, respectively, when there is no scope of confusion. Various thermodynamic properties of this model, which is alternatively referred to as the fully connected $p$-spin model or the ferromagnetic $p$-spin model, are studied in \cite{ab_ferromagnetic_pspin,pspinref1,ferromagnetic_mean_field,pspinref2}.

	In this paper we consider the problem of estimating the parameters $\beta$ and $h$ given a single sample $\bm \sigma \sim \P_{\beta, h, p}$ from the $p$-spin Curie-Weiss model. It is well-known, since the model \eqref{cwwithmg} has only one sufficient statistic (the average magnetization $\os = \frac{1}{N} \sum_{i=1}^n \sigma_i$), that joint estimation of the parameters $(\beta, h)$ in this model is, in general, impossible. This motivates the study of individual (marginal) estimation, that is, estimating $h$ when $\beta$ is assumed to be known and  estimating $\beta$ when $h$ is assumed to be known. As mentioned before, for the 2-spin Curie-Weiss model, this problem was studied in \cite{comets},  where the limiting properties of the individual ML estimates were derived. Here, we consider the analogous problem for the $p$-spin Curie-Weiss model, for $p \geq 3$. In particular, we derive precise limit theorems for the individual ML estimates of $\beta$ and $h$, hereafter, denoted by $\hat{\beta}_N$ and $\hat{h}_N$,  at all the parameter points. In addition to providing a complete description of the asymptotic properties of the ML estimates, our results unearth several remarkable new phenomenon, which we briefly summarize below. 
	
	\begin{itemize}
		
		\item We identify a region of `regular' points in the parameter space, where the ML estimates $\hat{\beta}_N$ and $\hat{h}_N$ are $N^{\frac{1}{2}}$-consistent\footnote{A sequence of estimators $\{\theta_n\}_{n \geq 1}$ is said to be $a_n$-consistent for the parameter $\theta$, if $a_n(\hat \theta_n-\theta)$ is bounded in probability.} and asymptotically normal (Theorem \ref{cltintr3_1} and Theorem \ref{thmmle1}). Here, the limiting variance equals the limiting inverse Fisher information, which implies that the ML estimates are asymptotically efficient at these points (Remark \ref{remark1}). The variance of the limiting normal distribution can be easily estimated as well, hence, this result also provides a way to construct asymptotically valid confidence intervals for the model parameters (Section \ref{sec:applications}).

		\item More interestingly, there are certain `critical' points, which form a 1-dimensional curve in the parameter space, where $\hat{\beta}_N$ and $\hat{h}_N$ are still $N^{\frac{1}{2}}$-consistent, but the limiting distribution is a mixture with both continuous and discrete components. The number of mixture components is either two or three, depending on, among other things, the sign of one of the parameters and the parity of $p$. In particular, at the points where the critical curve intersects the region $h\ne 0$, the scaled ML estimates $N^{\frac{1}{2}}(\hat{\beta}_N - \beta )$ and $N^{\frac{1}{2}}(\hat{h}_N-h)$ have a surprising  three component mixture distribution, where two of the components are folded (half) normal distributions and the other is a point mass at zero (Theorem \ref{cltintr3_III} and Theorem \ref{thmmle_III}). This new phenomenon, which is absent in the 2-spin case, is an example of the many intricacies of models with higher-order interactions. 
		
		\item Finally, there are one or two `special' points in the parameter space, depending on whether $p \geq 3$ is odd or even, respectively, where both the individual ML estimates are superefficient, with fluctuations of order $N^{\frac{3}{4}}$ and non-Gaussian limiting distributions (Theorem \ref{cltintr3_2} and Theorem \ref{thmmle2}). 
	\end{itemize} 
	Our results also reveal various other interesting phenomena, such as, inconsistency of $\hat{\beta}_N$ in a region of the parameter space, and an additional (strongly) critical point, where $\hat{h}_N$ is $N^{\frac{1}{2}}$-consistent, but $\hat{\beta}_N$ is not.   These results, which are formally stated in Section \ref{sec:samplemean_mle}, together provide a complete characterization of the limiting properties of the ML estimates in the $p$-spin Curie-Weiss model.

	\subsection{Related Work on Structure Learning}
	
	Another related area of active research is the problem of structure learning in Ising models and Markov Random Fields. Here, one is given access to {\it multiple} i.i.d. samples from an Ising model, or a more general graphical model, and the goal is to estimate the underlying graph structure. Efficient algorithms and statistical lower bounds for this problem has been developed over the years under various structural assumptions on the underlying graph (cf.~\cite{structure_learning,discrete_tree,highdim_ising,graphical_models_binary,ising_nonconcave} and the references therein). Bresler \cite{bresler} made the first breakthrough for general bounded degree graphs, giving an efficient algorithm for structure learning, which required only logarithmic samples in the number of nodes of the graph.  This result has been subsequently generalized to Markov-random fields with higher-order interactions and alphabets with more than two elements (cf. \cite{graphical_models_algorithmic,multiplicative} and the references therein). The related problems of goodness-of-fit and independence testing given multiple samples from an Ising model has been studied in Daskalakis et al. \cite{cd_testing}. Recently, Neykov and Liu \cite{neykovliu_property} and Cao et al. \cite{high_tempferro} considered the problem of testing graph properties, such as connectivity and presence of cycles or cliques, using multiple samples from the Ising model on the underlying graph.

	All these results, however, are in contrast with the present work, where the underlying graph structure is assumed to be known and the goal is to  estimate the parameters given a {\it single} sample from the model. This is motivated by the applications described above, where it is more common to have access to only a single sample of node activities across the whole network, such as in disease modeling or social network interactions, where it is unrealistic, if not 
	impossible, to generate many independent samples from the underlying model within a reasonable amount of time.

	\subsection{Organization}
	The rest of the paper is organized as follows. We state our main results about the limiting distribution of the ML estimates in Section \ref{sec:samplemean_mle}. The proofs of the main results are given in Section \ref{sec:mlebetahpf}. In Section \ref{sec:applications} we describe how the asymptotic results can be used to construct confidence intervals for the model parameters. Various technical details are given in the Appendix. 
	
	\section{Main Results}
	\label{sec:samplemean_mle}
	
	In this section we state our main results on the limiting properties of the the ML estimates in the $p$-spin Curie-Weiss model. Recall that our aim is to estimate the parameters $\beta$ and $h$ given a single sample $\bs \sim \p_{\beta,h,p} $ using the method of maximum likelihood, which involves maximizing the probability mass function \eqref{cwwithmg} with respect to $\beta$ and $h$. Note that the model \eqref{cwwithmg} can be written more compactly as  
	\begin{align}\label{cwss}
		\mathbb{P}_{\beta,h,p}(\bs) & = \frac{1}{2^{N} Z_N(\beta,h,p)} \exp\Big\{N\left(\beta \overline{\sigma}^p_N + h\os  \right) \Big\} , 
	\end{align}
	where $\os = \frac{1}{N} \sum_{i=1}^N \sigma_i	$ is the average magnetization. 
This shows that the $p$-spin Curie-model has a single sufficient statistic $\os$ which suggests, as mentioned before, that the parameters $(\beta, h)$ cannot be estimated simultaneously. In fact, one can show that the joint ML estimates for $(\beta, h)$ might not exist with probability 1 (see Lemma \ref{mleexist} for details). However, it is possible to marginally estimate one of the parameters assuming that the other is known. Hereafter, given $\bs \sim \p_{\beta,h,p} $, we denote by $\hat{\beta}_N$ and $\hat{h}_N$ the marginal maximum likelihood estimators of $\beta$ and $h$, respectively. More precisely, for fixed $h \in \R$, $\hat{\beta}_N$ is a solution of the equation (in $\beta$)
	\begin{equation}\label{eqmle}
		\e_{\beta,h,p} \left(\overline{\sigma}^p_N \right) = \overline{\sigma}^p_N . 
	\end{equation} 
	Similarly, for fixed $\beta$, $\hat{h}_N $ is a solution of the equation (in $h$)
	\begin{equation}\label{eqmleh}
		\e_{\beta,h,p} \left(\os \right) = \os . 
	\end{equation} 
	The limiting distribution of the ML estimates of $h$ and $\beta$ therefore depend on the fluctuations of the average magnetization $\os$ across the parameter space $\Theta:=\{(\beta,h): \beta \ge 0, h \in \mathbb{R}\}$. These are described in Section \ref{sec:mle_h} and Section \ref{sec:mle_beta}, respectively. The results are summarized in a phase diagram in Section \ref{sec:mle_beta_h_II}. We begin by recalling some relevant definitions from \cite{cmp} in Section \ref{sec:samplemean}.

	\subsection{Preparations}
	\label{sec:samplemean}

For $p \geq 2$ and $(\beta,h) \in \Theta:=[0, \infty) \times \R$, define the function $H = H_{\beta,h,p}:[-1,1]\rightarrow \mathbb{R}$ as
	\begin{align}\label{eq:H}
		H(x) := \beta x^p + hx - I(x),
	\end{align} where $I(x) := \frac{1}{2}\left\{(1+x)\log(1+x) + (1-x)\log(1-x) \right\}$, for $x \in  [-1, 1]$, is the binary entropy function. The points of maxima of this function determine the typical values of $\bar \sigma_N$ and, hence, play a crucial role in our results. It follows from results in \cite{cmp} that the function $H$ can have one, two, or three global maximizers in the open interval $(-1, 1)$, which leads to the following definition:\footnote{For a smooth function $f: [-1, 1] \rightarrow \R$ and $x \in (-1, 1)$, the first and second derivatives of $f$ at the point $x$ will be denoted by $f'(x)$ and $f''(x)$, respectively. More generally, for $s \geq 3$, the $s$-th order derivative of $f$ at the point $x$ will be denoted by $f^{(s)}(x)$. } 
	
	\begin{definition}\label{punique} Fix $p \geq 2$ and $(\beta,h) \in \Theta$, and let $H$ be as defined above in \eqref{eq:H}. 
		\begin{enumerate} 
			
			\item The point $(\beta,h)$ is said to be $p$-{\it regular}, if the function $H_{\beta,h,p}$ has a unique global maximizer $m_* = m_*(\beta,h,p) \in (-1,1)$ and $H_{\beta,h,p}''(m_*) < 0$.\footnote{A point $m \in (-1, 1)$ is a global maximizer of $H$ if $H(m) > H(x)$, for all $x\in [-1,1]\setminus \{m\}$.} Denote the set of all $p$-regular points in $\Theta$ by $\cR_p$.
			
			\item The point $(\beta,h)$ is said to be $p$-{\it special}, if $H_{\beta,h,p}$ has a unique global maximizer $m_* = m_*(\beta,h,p) \in (-1,1)$ and $H_{\beta,h,p}''(m_*) = 0$.  
			
			\item The point $(\beta,h)$ is said to be $p$-{\it critical}, if $H_{\beta,h,p}$ has more than one global maximizer. 
			
		\end{enumerate}
	\end{definition} 
	
	Note that the three cases above form a disjoint partition of the parameter space $\Theta$. Hereafter, we denote the set of $p$-critical points by $\cp$, and the set of points $(\beta, h)$ where $H_{\beta, h, p}$ has exactly two global maximizers by $\cp^+$.  It follows from \cite[Lemma B.3]{cmp} that the set of points in $\cp$ form a continuous $1$-dimensional curve in the parameter space $\Theta$ (see also Figure \ref{figure:ordering1} and Figure \ref{figure:ordering2}). Next, we consider points with three global maximizers, that is $\cp \backslash \cp^+$. To this end, define 
	\begin{align}\label{eq:betatilde}	
		\tilde{\beta}_p := \sup\left\{\beta \geq 0: \sup_{x\in [-1,1]}H_{\beta,0,p}(x) = 0  \right\}.  
	\end{align} 
	Now, depending on whether $p$ is odd or even we have the following two cases (see \cite[Lemma B.1]{cmp}):

	\begin{itemize}
		
		\item $p \geq 3$ odd: In this case  for all points $(\beta, h) \in \cp$, the function $H_{\beta, h, p}$ has exactly two global  maximizers, that is, $\cp=\cp^+$. 
		
		\item $p \geq 4$ even:  Here, there is a unique point 
		$\lambda_p:=(\tilde{\beta}_p,0) \in \cp$, with $\tilde \beta_p$ as defined in \eqref{eq:betatilde}, at which the function $H_{\tilde{\beta}_p,0,p}$ has exactly three global maximizers. For all other points in $(\beta, h)  \in \cp$, $H_{\beta, h, p}$ has exactly two global maximizers, that is,  $\cp=\cp^+ \cup \{\lambda_p \}$. In the case, $p\geq 4$ is even, we will refer to the point $\lambda_p$, or, equivalently, the point $\tilde{\beta}_p$, as the $p$-{\it strongly critical} point.\footnote{Note that the point $\tilde \beta_p$ is defined for all $p \geq 2$ (even or odd) as in \eqref{eq:betatilde}. However, for $p \geq 3$ odd, this point is  $p$-critical, but not $p$-strongly critical (that means it belongs to $\sC_p^+$). On the other hand, for $p=2$ this point is 2-special (see discussion in Remark \ref{remark:theta}).} Hereafter, when the need while arise to distinguish strongly critical points from other critical points, we will refer to a point which is $p$-critical but not $p$-strongly critical, as $p$-{\it weakly critical}. Note that the collection of all $p$-weakly critical points is precisely the set $\cp^+$.
	\end{itemize} 
	It remains to describe the structure of $p$-special points. To this end, fix $p \geq 3$ and define the following quantities: 
	\begin{align}\label{eq:beta_h_special}
		\check{\beta}_p := \frac{1}{2(p-1)} \left(\frac{p}{p-2}\right)^{\frac{p-2}{2}}\quad\textrm{and}\quad\check{h}_p := \tanh^{-1}\left(\sqrt{\frac{p-2}{p}}\right) - \check{\beta}_p p \left(\frac{p-2}{p}\right)^{\frac{p-1}{2}}. 
	\end{align}
	Again, depending on whether $p$ is even or odd there are two cases (\cite[Lemma B.2]{cmp}): 
	
	\begin{itemize}
		
		\item $p \geq 3$ odd: In this case,  there is only one $p$-special point $\tau_p:=(\check{\beta}_p,\check{h}_p)$.
		
		\item $p \geq 4$ even: Here, by the symmetry of the model about $h=0$, there are two $p$-special points $\tau_p^+:=(\check{\beta}_p, \check{h}_p)$ and $\tau_p^-:=(\check{\beta}_p, -\check{h}_p)$. 
		
	\end{itemize} 
	These points are especially interesting, because, as we will see in a moment, here the average magnetization has fluctuations of order $N^{\frac{1}{4}}$ and a non-Gaussian limiting distribution.

Note that, on recalling that $\cR_p$ denotes the set of all $p$-regular points and $\sC_p^+$ the set of points $(\beta, p)$ where $H_{\beta, h, p}$ has exactly two maximizers, the discussion above can be summarized as follows: 
	\begin{align}\label{eq:parameter_space}
		\Theta= 
		\left\{
		\begin{array}{cc}
			\cR_p \bigcup \sC_p^+ \bigcup \{\tau_p\}  &  \text{ for } p \geq 3  \text{ odd},    \\
			\cR_p \bigcup \sC_p^+ \bigcup  \{\lambda_p, \tau_p^+, \tau_p^-\}  &  \text{ for } p \geq 4 \text{ even}. 
		\end{array}
		\right. 
	\end{align}
	Figure \ref{figure:ordering1} and Figure \ref{figure:ordering2} illustrates this decomposition of the parameter space for $p=4$ and $p=5$, respectively.

	\begin{remark}\label{remark:theta} Note that \eqref{eq:parameter_space} provides a complete characterization of the parameter space for $p\geq 3$. As mentioned before, in the well-studied case of $p=2$, the situation is relatively simpler \cite{dm_gibbs_notes,ellis_book}. In this case, $H_{\beta, h, p}$ can have at most two global maximizers, that is, it has no strongly critical points, hence, $\cC_2=\cC_2^+$. In fact, it follows from \cite{ellis_book} that the set of points $(\beta, h)$ with exactly two global maximizers $\cC_2^+$ is the open half-line $(0.5, \infty)\times \{0\}$. Moreover, there is a single 2-special point $(0.5, 0)$ (where there the function $H$ has a unique maximum, but the double derivative is zero), and all the remaining points $\Theta \backslash [0.5, \infty)$ are 2-regular. This shows that for $p=2$ there is no point in $\Theta$ with $h \ne 0$ that is critical. 
	\end{remark}

	\subsection{ML Estimate of $h$} 
	\label{sec:mle_h}
	
	In order to describe the asymptotic distribution of the ML estimate of $h$, we need the following definition: 
	
	\begin{definition}\label{defn:halfnormal}
		For $\sigma > 0$, the {\it positive half-normal distribution} $N^+(0,\sigma^2)$ is defined as the distribution of $|Z|$, where $Z \sim N(0,\sigma^2)$. The {\it negative half-normal distribution} $N^-(0,\sigma^2)$ is defined as the distribution of $-|Z|$, where $Z \sim N(0,\sigma^2)$.
	\end{definition}
	
	The asymptotic distribution of the ML estimate of $h$ is summarized in the theorem below. As expected, the results depend on whether $(\beta, h)$ is regular, critical, or special, which we state separately in the theorems below. In this regard, denote by $\delta_x$ the point mass at $x$.  We begin with the case when $(\beta, h)$ is regular. Throughout, $H=H_{\beta,p,h}$ will be as defined in \eqref{eq:H}.

	\begin{thm}[Asymptotic distribution of $\hat{h}_N$ at $p$-regular points]\label{cltintr3_1}  Fix $p \geq 3$ and suppose $(\beta, h) \in \Theta$ is $p$-regular. Assume $\beta$ is known and $\bs \sim \mathbb{P}_{\beta,h,p} $. Then denoting the unique maximizer of $H$ by $m_*= m_*(\beta,h,p)$, as $N \rightarrow \infty$,
		\begin{align}\label{eq:h_1}
			N^{\frac{1}{2}}(\hat{h}_N  - h) \xrightarrow{D} N\left(0, -H''(m_*)\right).
		\end{align} 
	\end{thm}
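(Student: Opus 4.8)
\emph{Proof proposal.} Since $\hat h_N$ is a monotone function of the sufficient statistic $\os$, the plan is to invert this relation and transfer the magnetization central limit theorem of Theorem \ref{cltintr1}(1) through the inverse map. First I would set up the inversion. For $\eta \in \R$ let $\varphi_N(\eta) := \e_{\beta,\eta,p}(\os)$; differentiating under the finite sum gives $\varphi_N'(\eta) = N\var_{\beta,\eta,p}(\os) > 0$, so $\varphi_N$ is a strictly increasing continuous bijection from $\R$ onto $(-1,1)$. Moreover $\frac{\mathrm{d}}{\mathrm{d}\eta}\log\p_{\beta,\eta,p}(\bs) = N(\os - \varphi_N(\eta))$ changes sign from $+$ to $-$ at the unique root of \eqref{eqmleh}, so $\hat h_N = \varphi_N^{-1}(\os)$ is the unique maximizer, well-defined whenever $\os \in (-1,1)$ --- an event of probability $1-o(1)$ by the concentration of $\os$ near $m_* \in (-1,1)$ (Lemma \ref{conc}). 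Writing $c_N(t) := N^{\frac{1}{2}}(\varphi_N(h + tN^{-\frac{1}{2}}) - m_*)$, monotonicity then yields, for each fixed $t \in \R$,
\[
\p_{\beta,h,p}\!\left(N^{\frac{1}{2}}(\hat h_N - h) \leq t\right) = \p_{\beta,h,p}\!\left(N^{\frac{1}{2}}(\os - m_*) \leq c_N(t)\right) + o(1),
\]
the crucial point being that the right-hand side is now an event concerning $\os$ alone, evaluated at the \emph{true} parameter, with the randomness of $\hat h_N$ absorbed into the deterministic threshold $c_N(t)$.

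The next step is to show $c_N(t) \to -t/H''(m_*)$. Using $\varphi_N' = N\var$ and the fundamental theorem of calculus, one writes $\varphi_N(h + tN^{-\frac{1}{2}}) - m_* = (\varphi_N(h) - m_*) + N^{-\frac{1}{2}}\int_0^t N\var_{\beta,\,h+sN^{-\frac{1}{2}},\,p}(\os)\,\mathrm{d}s$. The bias $\varphi_N(h) - m_*$ is $O(N^{-1})$ --- a fixed-parameter consequence of the second-order partition-function expansion behind Theorem \ref{cltintr1}(1) (equivalently, of moment convergence in that CLT) --- hence negligible after multiplying by $N^{\frac{1}{2}}$. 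For the integral, I would use that the set of $p$-regular points is open, so $(\beta, h+sN^{-\frac{1}{2}})$ stays $p$-regular and $m_*(\beta,h+sN^{-\frac{1}{2}},p) \to m_*$; then the perturbed-parameter version of the magnetization CLT (Theorem \ref{cltun}) gives $N\var_{\beta,h_N,p}(\os) \to -1/H''(m_*)$ for any $h_N \to h$, and dominated convergence yields $c_N(t) = o(1) + \int_0^t N\var_{\beta,\,h+sN^{-\frac{1}{2}},\,p}(\os)\,\mathrm{d}s \to -t/H''(m_*)$.

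To conclude, Theorem \ref{cltintr1}(1) gives $N^{\frac{1}{2}}(\os - m_*) \xrightarrow{D} Z \sim N(0,-1/H''(m_*))$ under $\p_{\beta,h,p}$; combining this with $c_N(t) \to -t/H''(m_*)$ and the continuity of the Gaussian distribution function, a standard converging-threshold argument (sandwich $c_N(t)$ between $-t/H''(m_*) \pm \eps$ and let $\eps \downarrow 0$) would give, for every $t \in \R$,
\[
\p_{\beta,h,p}\!\left(N^{\frac{1}{2}}(\hat h_N - h) \leq t\right) \longrightarrow \p\!\left(Z \leq -t/H''(m_*)\right) = \Phi\!\left(t\sqrt{-1/H''(m_*)}\right),
\]
using $\var(Z) = -1/H''(m_*)$ in the last equality. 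Since $t \mapsto \Phi(t\sqrt{-1/H''(m_*)})$ is precisely the distribution function of $N(0,-H''(m_*))$, this is \eqref{eq:h_1}. (Taking $t = \pm\eps N^{\frac{1}{2}}$ in the same sandwich gives, as a free byproduct, the consistency $\hat h_N \xrightarrow{P} h$.)

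The main obstacle is the fine control of $\varphi_N$ at scale $N^{-1/2}$ near $h$: both the $O(1/N)$ bound on the bias $\varphi_N(h)-m_*$ and the convergence of $N\var_{\beta,h_N,p}(\os)$ under vanishing field perturbations hinge on the sharp (second-order) asymptotics of $Z_N$ obtained in Appendix \ref{approx} and packaged in Theorem \ref{cltun}; everything else is bookkeeping. It is worth recording where the answer comes from: implicitly differentiating $H_{\beta,\eta,p}'(m_*) = p\beta m_*^{p-1} + \eta - \tanh^{-1}(m_*) = 0$ gives $\partial_\eta m_*(\beta,\eta,p) = -1/H''(m_*)$, so $\varphi_N'(\eta) = N\var_{\beta,\eta,p}(\os) \to \partial_\eta m_*$, and it is precisely this slope that makes the limiting variance of $N^{\frac{1}{2}}(\hat h_N - h)$ equal the inverse Fisher information $-H''(m_*)$ (cf.\ Remark \ref{remark1}).
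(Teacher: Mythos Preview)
Your proposal is correct and follows essentially the same route as the paper: invert the ML equation via the strict monotonicity of $\eta\mapsto\e_{\beta,\eta,p}(\os)$ to rewrite $\{N^{1/2}(\hat h_N-h)\le t\}$ as $\{N^{1/2}(\os-m_*)\le c_N(t)\}$, then combine the unperturbed CLT for $\os$ with the perturbed result (Theorem~\ref{cltun}) to identify the limit $c_N(t)\to -t/H''(m_*)$.

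The only real difference is in how you compute the threshold. The paper observes directly that $c_N(t)=\e_{\beta,\,h+tN^{-1/2},\,p}\bigl[N^{1/2}(\os-m_*)\bigr]$ and reads off its limit $-t/H''(m_*)$ as the mean of the limiting Gaussian in \eqref{eq:cltun_I} (MGF convergence $\Rightarrow$ moment convergence). Your fundamental-theorem-of-calculus decomposition into bias plus an integral of $N\var$ reaches the same answer but is slightly more work: the dominated-convergence step needs a uniform-in-$s$ bound on $N\var_{\beta,h+sN^{-1/2},p}(\os)$ over $s\in[0,t]$, which you would have to extract from uniformity of the estimates behind Theorem~\ref{cltun}. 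Also, your bias claim $\varphi_N(h)-m_*=O(N^{-1})$ is stronger than what you actually use or justify; moment convergence in Theorem~\ref{cltintr1}(1) only gives $N^{1/2}(\varphi_N(h)-m_*)\to 0$, which is exactly what you need. None of this is a genuine gap, just places where the paper's direct first-moment argument is cleaner.
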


	This result shows that $\hat{h}_N $ is $N^{\frac{1}{2}}$-consistent and asymptotically normal at the regular points. Before discussing more about the implications of this theorem, we state the result for the asymptotic distribution of $\hat{h}_N$ when $(\beta, h)$ is $p$-special.

	\begin{thm}[Asymptotic distributions of $\hat{h}_N$ at $p$-special points]\label{cltintr3_2}	
		Fix $p \geq 3$ and suppose $(\beta, h) \in \Theta$ is $p$-special. Assume $\beta$ is known and $\bs \sim \mathbb{P}_{\beta,h,p} $. Then denoting the unique maximizer of $H$ by $m_*= m_*(\beta,h,p)$, as $N \rightarrow \infty$, 
		\begin{align}\label{eq:h_5} 
			N^\frac{3}{4}(\hat{h}_N - h) \xrightarrow{D} G_1, 
		\end{align} 
		where the distribution function of $G_1$ is given by  
		$$G_1(t) = F_{0,0}\left(\int_{-\infty}^{\infty} u ~\mathrm d F_{0,t}(u)\right),$$ with $F_{0, t}$ as defined in \eqref{eq:beta_h_distribution} below. 
	\end{thm}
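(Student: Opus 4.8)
The plan is to exploit the fact that the ML estimate $\hat h_N$ is the solution of the score equation \eqref{eqmleh}, namely $\e_{\beta,\hat h_N,p}(\os)=\os$, and turn this into a statement about the fluctuations of $\os$. First I would recall that the map $h\mapsto \e_{\beta,h,p}(\os)$ is strictly increasing in $h$ (standard, since the model is a one-parameter exponential family in $h$ with sufficient statistic $\sum_i\sigma_i$, so the derivative in $h$ equals $N\var_{\beta,h,p}(\os)>0$). Hence $\hat h_N$ is uniquely defined whenever it exists, and $\hat h_N > h$ precisely when $\os > \e_{\beta,h,p}(\os)$, and vice versa. So the sign of $\hat h_N - h$ mirrors the sign of $\os - \e_{\beta,h,p}(\os)$, and since $(\beta,h)$ is $p$-special with unique maximizer $m_*$, we know $\os \to m_*$ in probability and (by the analogue of Theorem \ref{cltintr1}(3), or rather the more general Theorem \ref{cltun} referenced in the text) $N^{1/4}(\os - m_*)\xrightarrow{D} F$ with the quartic-exponential density \eqref{eq:meanclt_III}. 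The heuristic is: at a $p$-special point the second derivative $H''(m_*)$ vanishes, so the "restoring force" in the score equation is of higher order, which inflates the sensitivity of $\hat h_N$ to the fluctuation of $\os$ — this is exactly where the faster $N^{3/4}$ rate (superefficiency) comes from, compared with the $N^{1/2}$ rate at regular points in Theorem \ref{cltintr3_1}.

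The key technical step is a sufficiently precise expansion of $\e_{\beta,h',p}(\os)$ for $h'$ in an $N^{-3/4}$-neighborhood of $h$, uniformly enough to invert the score equation. I would proceed as follows. Step 1: Show that $\hat h_N - h = O_P(N^{-3/4})$, so that we may restrict attention to $h' = h + t N^{-3/4}$ for $t$ in a compact set; this should follow from the concentration of $\os$ (the analogue of Lemma \ref{irrconch}) combined with the monotonicity above. Step 2: For $h_N := h + tN^{-3/4}$, use the partition-function approximation at perturbed parameters (the $p$-special analogue, Lemma \ref{ex2} / Theorem \ref{cltun}) to compute $\e_{\beta,h_N,p}(\os)$; writing $\os = m_* + N^{-1/4}W$, the perturbed model's magnetization $W$ converges to a random variable with density proportional to $\exp\{\frac{H^{(4)}(m_*)}{24}w^4 + c\, t\, w\}$ for an explicit constant $c$ coming from the linear-in-$h$ term $h\sum_i\sigma_i$ in the Hamiltonian. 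This is precisely the two-parameter family $F_{0,t}$ referenced via \eqref{eq:beta_h_distribution}, with $F_{0,0}=F$. Step 3: Hence $N^{1/4}(\e_{\beta,h_N,p}(\os) - m_*) \to \int u\,\mathrm dF_{0,t}(u)$, the mean of $F_{0,t}$, which is a continuous strictly increasing function of $t$. Step 4: The event $\{\hat h_N \leq h + tN^{-3/4}\}$ equals (by Step's monotonicity) $\{\os \leq \e_{\beta,h+tN^{-3/4},p}(\os)\}$, i.e. $\{N^{1/4}(\os-m_*) \leq N^{1/4}(\e_{\beta,h_N,p}(\os)-m_*)\}$; taking $N\to\infty$, the left side converges to $F=F_{0,0}$ evaluated at the limit of the right side, giving $\p(N^{3/4}(\hat h_N - h)\leq t) \to F_{0,0}\!\left(\int u\,\mathrm dF_{0,t}(u)\right) = G_1(t)$, which is the claimed distribution function.

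The main obstacle I anticipate is making Step 2 uniform in $t$ over compacts and carrying enough terms in the Taylor/saddle-point expansion of the perturbed log-partition function. At a $p$-special point, the leading Gaussian-type curvature is absent, so the naive Laplace approximation must be replaced by a quartic Laplace approximation, and one must track the interaction between the $t N^{-3/4}\sum_i\sigma_i$ perturbation and the quartic term — in particular verifying that the perturbation contributes at exactly the same order $N^{-1/4}$ in the rescaled variable $W$ (so the limit is a genuine tilt of $F$ by a linear exponential factor) and does not leak into lower-order terms that would shift $m_*$ itself. A secondary subtlety is justifying the interchange of limit and the integral $\int u\,\mathrm dF_{0,t}(u)$, i.e. uniform integrability of $N^{1/4}(\os-m_*)$ under the perturbed measures, which should follow from the same exponential concentration estimates (the $p$-special concentration lemma) applied with the perturbed parameters. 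Once these approximations are in hand, the inversion of the monotone score equation and the identification of $G_1$ are routine.
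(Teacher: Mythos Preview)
Your proposal is correct and follows essentially the same route as the paper: convert $\{N^{3/4}(\hat h_N-h)\le t\}$ into $\{N^{1/4}(\os-m_*)\le \e_{\beta,h+tN^{-3/4},p}[N^{1/4}(\os-m_*)]\}$ via monotonicity of $h\mapsto\e_{\beta,h,p}(\os)$, then apply Theorem~\ref{cltun}(3) on both sides. Two simplifications relative to your outline: your Step~1 (a priori tightness) is unnecessary since the argument is pointwise in $t$, and your uniform-integrability worry is dispatched in the paper by the fact that Theorem~\ref{cltun} is proved via pointwise convergence of moment generating functions, which automatically yields convergence of the first moment.
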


	Finally, we consider the case $(\beta, h)$ is $p$-critical. Here, it is convenient to consider the cases $p$ is odd or even separately. 
	
	\begin{thm}[Asymptotic distribution of $\hat{h}_N$ at $p$-critical points]\label{cltintr3_III}
		Fix $p \geq 3$ and suppose $(\beta, h) \in \Theta$ is $p$-critical. Assume $\beta$ is known and $\bs \sim \mathbb{P}_{\beta,h,p} $. Denote the $K \in \{2,3\}$ maximizers of $H$ by $m_1 := m_1(\beta, h, p) < \ldots < m_K := m_K(\beta, h, p)$, and let $p_1, \ldots, p_K$ be defined as:
		\begin{equation}\label{eq:p1}
			p_k := \frac{\left[(m_k^2-1)H''(m_k)\right]^{-1/2}}{\sum_{i=1}^K \left[(m_i^2-1)H''(m_i)\right]^{-1/2}}.
		\end{equation} 
		
		\begin{itemize}

			\item[$(1)$] Suppose $p \geq 3$ is odd. In this case, the function $H$ has exactly two (asymmetric) maximizers $m_1 < m_2$ and, as $N \rightarrow \infty$,
			\begin{align}\label{eq:h_31}
				N^{\frac{1}{2}}(\hat{h}_N - h) \xrightarrow{D} \tfrac{p_1}{2} N^{-}\left(0, -H''(m_1)\right) + \tfrac{1-p_1}{2} N^{+}\left(0, -H''(m_2)\right) + \tfrac{1}{2} \delta_0, 
			\end{align} 
			where $N^{\pm}$ are the half-normal distributions as in Definition $\ref{defn:halfnormal}$.

			\item[$(2)$] Suppose $p \geq 4$ is even.  Then the following hold:  
			
			\begin{enumerate}
				
				\item[$\bullet$] If $h \ne 0$,  then the function $H$ has exactly two (asymmetric) maximizers $m_1 < m_2$ and, as $N \rightarrow \infty$,
				\begin{align}\label{eq:h_3}
					N^{\frac{1}{2}}(\hat{h}_N - h) \xrightarrow{D} \tfrac{p_1}{2} N^{-}\left(0, -H''(m_1)\right) + \tfrac{1-p_1}{2} N^{+}\left(0, -H''(m_2)\right) + \tfrac{1}{2} \delta_0.  
				\end{align}

				\item[$\bullet$] If $h=0$ and $\beta> \tilde{\beta}_p$, then the function $H$ has exactly two symmetric maximizers $m_1, m_2$, where $m_2 = -m_1 = m_*$, for some $m_* = m_*(\beta, h, p)> 0$. Then, as $N \rightarrow \infty$, 
				\begin{align}\label{eq:h_4}
					N^{\frac{1}{2}} \hat{h}_N  \xrightarrow{D} \tfrac{1}{2} N\left(0, -H''(m_*)\right)  + \tfrac{1}{2} \delta_0.
				\end{align}
				
				\item[$\bullet$] If $h=0$ and $\beta = \tilde{\beta}_p$, the function $H$ has three maximizers $m_1=-m_*$, $m_2=0$, and $m_3=m_*$, where $m_*=m_*(\beta, h, p) > 0$.  Then, as $N \rightarrow \infty$,
				\begin{align}\label{eq:h_2}
					N^{\frac{1}{2}}\hat{h}_N \xrightarrow{D} p_1 N\left(0, -H''(m_1)\right) + (1-p_1)\delta_0,
				\end{align} 
				where $p_1$ is as defined in \eqref{eq:p1}.  
			\end{enumerate} 
		\end{itemize}
		
	\end{thm}
	
	The proofs of these results are given in Section \ref{sec:mlebetahpf}. The results above show that for all points in the parameter space, the ML estimate $\hat{h}_N$ is a consistent estimate of $h$, that is, $\hat{h}_N \pto h$. Moreover, the rate of convergence is $N^{\frac{1}{2}}$, except at the $p$-special points. However, at the $p$-special point(s), 
	the rate improves to $N^\frac{3}{4}$, that is, the ML estimate of $h$ at these point(s) is {\it superefficient}, converging to the true value of $h$ faster than the usual $N^{\frac{1}{2}}$ rate at the neighboring points. Another interesting feature is that, while at the regular points $\hat h_N$ has a simple Gaussian limit, at the critical points it has a mixture distribution, consisting of (half) normals and a point mass at $0$. The reason the limiting distribution has a point mass at $0$ is because the average magnetization $\os$ is  ``discontinuous"  under the perturbed measure $\p_{\beta,h+ t/\sqrt N,p}$, as $t$ transitions from negative to positive. In fact, Lemma \ref{redunsup} (in Appendix \ref{sec:pf_concentration}) shows that under the measure $\p_{\beta,h+ t/\sqrt{N},p}$, the point where $\os$ concentrates depends on the sign of the perturbation factor $t$. Therefore, since the distribution function of $N^{\frac{1}{2}}(\hat{h}_N - h)$ evaluated at $t$ depends on the law of $\os$ under the perturbed measure $\p_{\beta,h+ t/\sqrt N, p}$ (see the calculations in Section \ref{sec:pf_beta_h_2} for details), it has a discontinuity at the point $t=0$, and, hence, a point mass at $0$ appears in the limit. 
	
	Another interesting revelation are the results in \eqref{eq:h_31} and \eqref{eq:h_3}, where the $H$ function has two (asymmetric) maximizers. In this case, the ML estimate $\hat h_N$ converges to a three component mixture, which has a point mass at zero with probability $\frac{1}{2}$ and is a mixture of two half normal distributions, with probabilities $\frac{p_1}{2}$ and $\frac{1-p_1}{2}$, respectively. This corresponds to the region of the critical curve where $h \ne 0$ (and also the point $(\tilde \beta_p, 0)$, for $p \geq 3$ odd), a striking new phenomena that emerges only when $p \geq 3$. Note that, this does not happen for $p=2$, because, in this case, $\sC_p^+=(0.5, \infty)\times \{0\}$, hence, the two maximizers at any 2-critical point are symmetric about zero, and the two half normal mixing components combine to form a single Gaussian, and the resulting limit is the mixture of a single normal and a point mass at zero, as is the case in \eqref{eq:h_4} above.

	\subsection{ML Estimate of $\beta$}
	\label{sec:mle_beta} 
	
	Here, we consider the ML estimate  $\hat{\beta}_N$ of $\beta$. As before, the results depend on whether $(\beta, h)$ is regular, critical, or special. However, the analysis here is more involved, and each of these cases breaks down into further cases, depending on the value of the maximizers, the parity of $p$, and the sign of $h$. We begin with the case when $(\beta, h)$ is regular. As always, $H=H_{\beta,p,h}$ will be as defined in \eqref{eq:H}.

	\begin{thm}[Asymptotic distributions of $\hat{\beta}_N$ at $p$-regular points]\label{thmmle1} 
		Fix $p \geq 3$ and suppose $(\beta, h) \in \Theta$ is $p$-regular. Assume $h$ is known and $\bs \sim \mathbb{P}_{\beta,h,p} $. Then denoting the unique maximizer of $H$ by $m_*= m_*(\beta,h,p)$, the following hold, 
		\begin{itemize}
			
			\item[$\bullet$]  If $m_* \neq 0$, then, as $N \rightarrow \infty$, 
			\begin{align}\label{eq:bmle_m_1}
				N^{\frac{1}{2}}(\hat{\beta}_N-\beta) \xrightarrow{D} N\left(0, -\frac{H''(m_*)}{p^{2}m_*^{2p-2}} \right). 
			\end{align}
			
			\item[$\bullet$] If $m_* = 0$, (equivalently, $h = 0$ and $\beta < \tilde{\beta}_p$), then, as $N \rightarrow \infty$,
			\begin{align}\label{eq:bmle_m_2} 
				\hat{\beta}_N \xrightarrow{D} 
				\begin{cases}
					\frac{1}{2}\delta_{\tilde{\beta}_p} + \frac{1}{2} \delta_{-\tilde{\beta}_p} &\quad\text{if}~p~\textrm{is odd},\\
					\gamma_p \delta_{-\infty} + (1-\gamma_p)\delta_{\tilde{\beta}_p} &\quad\text{if}~p~\textrm{is even},\\
				\end{cases}
			\end{align}
			where $\gamma_p := \p(Z^p \leq \e Z^p)$ with $Z \sim N(0,1)$. 
		\end{itemize}
	\end{thm}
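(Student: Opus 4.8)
The plan is to study the ML equation \eqref{eqmle} by rewriting it as $\psi_N(\hat{\beta}_N) = \os^p$, where $\psi_N(b) := \e_{b,h,p}(\os^p)$. Since $N^{-(p-1)}\sum_{1\le i_1,\dots,i_p\le N}\sigma_{i_1}\cdots\sigma_{i_p} = N\os^p$, we have $\psi_N(b) = N^{-1}\partial_b F_N(b,h,p)$, so $\psi_N'(b) = N^{-1}\partial_b^2 F_N(b,h,p) = N\var_{b,h,p}(\os^p) > 0$; thus $\psi_N$ is smooth and strictly increasing, and $\hat{\beta}_N = \psi_N^{-1}(\os^p)$ is the unique root whenever $\os^p$ lies in the range of $\psi_N$ (the precise meaning of $\hat{\beta}_N$, and of the point mass at $-\infty$, being as in Lemma \ref{mleexist}). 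I would then treat the cases $m_* \ne 0$ and $m_* = 0$ separately.

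For $m_* \ne 0$ the idea is a delta-method argument. First I would establish, uniformly for $b$ in a small (hence still $p$-regular) neighborhood of $\beta$, the limits
\begin{align}\label{eq:proposal_psi}
\psi_N(b) \longrightarrow m_*(b,h,p)^p =: \psi(b) \qquad\text{and}\qquad \psi_N'(b) \longrightarrow -\frac{p^2\, m_*(b,h,p)^{2p-2}}{H_{b,h,p}''(m_*(b,h,p))} ,
\end{align}
together with $\psi_N(\beta) - m_*^p = o(N^{-1/2})$. These come from combining the perturbed-parameter CLT for $\os$ (Theorem \ref{cltun}) with the expansion $\os^p - m_*^p = p\,m_*^{p-1}(\os - m_*) + O((\os - m_*)^2)$, after upgrading convergence in distribution to convergence of the first two moments of $\os - m_*$; this upgrade, and the uniformity in $b$, is supplied by the sub-Gaussian concentration of $\os$ around $m_*$ in Lemma \ref{conc}. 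Differentiating $H_{b,h,p}'(m_*(b,h,p)) = 0$ shows $\psi$ is $C^1$ near $\beta$ and strictly increasing with $\psi'(\beta) = -p^2 m_*^{2p-2}/H''(m_*)$; combined with the monotonicity of $\psi_N$, the convergence $\psi_N \to \psi$, and $\os^p \pto m_*^p$ (Theorem \ref{cltintr1}(1)), this gives consistency $\hat{\beta}_N \pto \beta$. Finally, expanding $\psi_N(\hat{\beta}_N) - \psi_N(\beta) = \psi_N'(\xi_N)(\hat{\beta}_N - \beta)$ with $\xi_N$ between $\hat{\beta}_N$ and $\beta$ and using $\psi_N(\hat{\beta}_N) = \os^p$, we get
\begin{align}\label{eq:proposal_rep}
N^{\frac12}(\hat{\beta}_N - \beta) = \frac{N^{\frac12}\big(\os^p - m_*^p\big) + N^{\frac12}\big(m_*^p - \psi_N(\beta)\big)}{\psi_N'(\xi_N)} .
\end{align}
The second numerator term is $o(1)$ by \eqref{eq:proposal_psi}; the first is asymptotically $N(0, -p^2 m_*^{2p-2}/H''(m_*))$ by Theorem \ref{cltintr1}(1) and the delta method; the denominator tends to $-p^2 m_*^{2p-2}/H''(m_*) \ne 0$ by \eqref{eq:proposal_psi} and $\xi_N \pto \beta$; and Slutsky's theorem yields \eqref{eq:bmle_m_1}.

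For $m_* = 0$, equivalently $h = 0$ and $\beta < \tilde{\beta}_p$, the behaviour is genuinely different and I would argue directly from the monotonicity of $\psi_N$. The key structural fact is that $H_{b,0,p}''(0) = -1$ for every $b$ and every $p \ge 3$ (the term $b\,p(p-1)x^{p-2}$ vanishes at $x = 0$), so that $(b,0)$ is $p$-regular with maximizer $0$ and limiting variance $1$ for all $b \in (-\tilde{\beta}_p, \tilde{\beta}_p)$ when $p$ is odd, and for all $b < \tilde{\beta}_p$ when $p$ is even (using $bx^p \le 0$ for $b < 0$, $p$ even). Hence, by Theorem \ref{cltintr1}(1) and uniform integrability, $N^{\frac12}\os \xrightarrow{D} N(0,1)$ and $N^{p/2}\psi_N(b) \to \e Z^p$ with $Z \sim N(0,1)$, uniformly for $b$ in compact subsets of that range; while for $b > \tilde{\beta}_p$ (and, for $p$ odd, $b < -\tilde{\beta}_p$) the point $(b,0)$ has a nonzero maximizer, so $\psi_N(b) \to m_*(b,0,p)^p$ is bounded away from $0$. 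Since $\os^p = (N^{\frac12}\os)^p\, N^{-p/2}$ is of exact order $N^{-p/2}$ in probability, comparing it with the asymptotic ``plateau'' level $N^{-p/2}\e Z^p$ through the monotonicity of $\psi_N$ localizes the root $\hat{\beta}_N = \psi_N^{-1}(\os^p)$: for $p$ even, on $\{(N^{\frac12}\os)^p < \e Z^p\}$ one gets $\psi_N(b) > \os^p$ for all $b \le \tilde{\beta}_p$, forcing $\hat{\beta}_N$ below any fixed level, i.e.\ $\hat{\beta}_N \pto -\infty$, whereas on the complement $\hat{\beta}_N$ is squeezed into $(\tilde{\beta}_p - \delta, \tilde{\beta}_p]$ for every $\delta > 0$, i.e.\ $\hat{\beta}_N \pto \tilde{\beta}_p$; for $p$ odd, where $\e Z^p = 0$, the sign of $\os$ decides whether $\hat{\beta}_N \pto \tilde{\beta}_p$ or $\hat{\beta}_N \pto -\tilde{\beta}_p$. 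Since $\p(\os > 0) \to \tfrac12$ and $\p\big((N^{\frac12}\os)^p < \e Z^p\big) \to \p(Z^p \le \e Z^p) = \gamma_p$, this gives \eqref{eq:bmle_m_2} (the threshold comparisons being carried out with $(1 \pm \eps_N)\e Z^p$, $\eps_N \to 0$ slowly).

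The main obstacle, in both cases, is upgrading the distributional limits of Theorems \ref{cltintr1} and \ref{cltun} to convergence of the moment $\psi_N(b) = \e_{b,h,p}(\os^p)$ at the precision required — an $o(N^{-1/2})$ error in the regular case, and the exact $N^{-p/2}\e Z^p$ asymptotics in the degenerate case — and, crucially, uniformly for $b$ near $\beta$; this forces one through the exponential concentration of $\os$ (Lemma \ref{conc}) and the saddle-point/Riemann-sum expansion of $Z_N$ rather than a mere CLT. The second delicate point, specific to $m_* = 0$, is pinning down the ``plateau-then-ramp'' profile of $b \mapsto \psi_N(b)$ near $\tilde{\beta}_p$: one must show that the plateau height is asymptotically \emph{exactly} $N^{-p/2}\e Z^p$ and independent of $b < \tilde{\beta}_p$ (which is what makes the crossover threshold equal to $\e Z^p$ and produces the clean constant $\gamma_p$), together with controlling the escape of the root to $-\infty$.
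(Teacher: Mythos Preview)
Your argument is essentially correct, but for the case $m_*\neq 0$ you take a genuinely different route from the paper, and it is worth contrasting the two.

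The paper does \emph{not} invert $\psi_N$ and Taylor--expand. Instead it computes the distribution function of $N^{1/2}(\hat{\beta}_N-\beta)$ pointwise: by monotonicity of $b\mapsto u_{N,p}(b,h,p)$ (Lemma~\ref{increasing}),
\[
\p_{\beta,h,p}\bigl(N^{1/2}(\hat{\beta}_N-\beta)\le t\bigr)
=\p_{\beta,h,p}\Bigl(N^{1/2}(\os^p-m_*^p)\le \e_{\beta+t N^{-1/2},\,h,\,p}\bigl[N^{1/2}(\os^p-m_*^p)\bigr]\Bigr),
\]
and then takes the limit of each side separately: the random variable on the left converges to $N(0,\sigma^2)$ by Theorem~\ref{cltintr1}(1), while the deterministic right side converges to $t\sigma^2$ directly from the \emph{perturbed} CLT (Theorem~\ref{cltun}(1) with $\bar\beta=t$, $\bar h=0$) plus MGF/moment convergence. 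The only moment input needed is the \emph{mean} of $N^{1/2}(\os^p-m_*^p)$ under the tilted measure $\p_{\beta+tN^{-1/2},h,p}$, which is exactly what Theorem~\ref{cltun} was built for. Your delta--method representation \eqref{eq:proposal_rep} instead requires $\psi_N'(\xi_N)\to -p^2m_*^{2p-2}/H''(m_*)$ with $\xi_N$ random, i.e.\ locally uniform convergence of the \emph{variance} $b\mapsto N\var_{b,h,p}(\os^p)$. That is certainly provable from the uniform-in-$b$ partition-function expansion underlying Lemma~\ref{ex}, but it is extra work that the paper's CDF approach sidesteps entirely. In short: the paper trades your inverse-function/variance argument for a direct first-moment computation under a specific $O(N^{-1/2})$ tilt; your route is more classical, theirs is more economical given that Theorem~\ref{cltun} is already available.

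For the case $m_*=0$ your plan coincides with the paper's: both reduce to analyzing $\p\bigl((N^{1/2}\os)^p\le N^{p/2}u_{N,p}(t,0,p)\bigr)$ and identifying the three regimes $t>\tilde\beta_p$, $0\le t<\tilde\beta_p$, $t<0$. One remark: for $p$ even and $t<0$ you extend the ``$p$-regular, maximizer $0$'' picture to negative $b$ and invoke the CLT there. That is perfectly valid (none of the concentration/saddle-point lemmas use $\beta\ge 0$), but the paper keeps $\Theta=[0,\infty)\times\R$ and instead proves $N^{p/2}u_{N,p}(t,0,p)\to\e Z^p$ by a direct second-derivative bound on $F_N$ (see \eqref{myt1}), avoiding any CLT at negative temperature.
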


	We will discuss the various implications of the above theorem later in this section. Now, we state the result for the asymptotic distribution of $\hat{\beta}_N$ when $(\beta, h)$ is $p$-special.

	\begin{thm}[Asymptotic distributions of $\hat{\beta}_N$ at $p$-special points]\label{thmmle2}	
		Fix $p \geq 3$ and suppose $(\beta, h) \in \Theta$ is $p$-special. Assume $h$ is known and $\bs \sim \mathbb{P}_{\beta,h,p} $. Then denoting the unique maximizer of $H$ by $m_*= m_*(\beta,h,p)$, as $N \rightarrow \infty$,
		\begin{align}\label{eq:bmle_m_3} 
			N^\frac{3}{4}(\hat{\beta}_N - \beta) \xrightarrow{D} G_2, 
		\end{align} 
		where the distribution function of $G_2$ is given by 
		$$G_2(t) = F_{0,0}\left(\int_{-\infty}^{\infty} u ~\mathrm d F_{t,0}(u)\right),$$ with $F_{t,0}$ as defined in  \eqref{eq:beta_h_distribution} below. 
	\end{thm}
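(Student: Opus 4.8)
The plan is to combine a monotonicity argument for the likelihood equation \eqref{eqmle} with the generalized magnetization CLT at perturbed parameters (Theorem \ref{cltun}). Viewed as a function of $\beta$ with $h$ fixed, the $p$-spin log-likelihood is an exponential family with natural statistic $N\os^p$, hence concave in $\beta$, and $\partial_\beta \e_{b,h,p}(\os^p) = N\var_{b,h,p}(\os^p) > 0$ for every finite $N$; so $b\mapsto \e_{b,h,p}(\os^p)$ is strictly increasing and, adopting the convention that $\hat\beta_N$ is the extended-real argmax, one has the exact identity $\{\hat\beta_N \leq b\} = \{\os^p \leq \e_{b,h,p}(\os^p)\}$ for every $b$ (the cases $\hat\beta_N = \pm\infty$ occur precisely when $\os^p$ falls outside the range of $b\mapsto\e_{b,h,p}(\os^p)$, and the identity remains valid). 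Taking $b = \beta + tN^{-3/4}$ reduces the theorem to computing the limit of $\p\big(\os^p \leq \e_{\beta + tN^{-3/4}, h, p}(\os^p)\big)$.

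I would then study both sides of this inequality on the $N^{1/4}$ scale. At a $p$-special point $m_* \neq 0$ (indeed $|m_*| = \sqrt{(p-2)/p}$ and $H''(m_*) = H^{(3)}(m_*) = 0 > H^{(4)}(m_*)$, by Lemma \ref{derh22}), so $\os^p = m_*^p + p m_*^{p-1}(\os - m_*) + O\big((\os - m_*)^2\big)$. Under $\p_{\beta, h, p}$, Theorem \ref{cltintr1}(3) gives $N^{1/4}(\os - m_*) \xrightarrow{D} U_0 \sim F_{0,0}$, whence $N^{1/4}(\os^p - m_*^p) \xrightarrow{D} p m_*^{p-1} U_0$. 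Under $\p_{\beta + tN^{-3/4}, h, p}$, the point of this particular rate $N^{-3/4}$ is that, after multiplying $H_{\beta + tN^{-3/4}, h, p}(m_* + uN^{-1/4})$ by $N$ (the scaling in the saddle-point analysis of $Z_N$ behind Lemma \ref{ex2}), the perturbation contributes $tN^{1/4}\big[(m_* + uN^{-1/4})^p - m_*^p\big] = t p m_*^{p-1}u + o(1)$ while the $u$-free piece $tN^{1/4}m_*^p$ cancels in the normalization; the effective log-density in $u$ is therefore $\tfrac{H^{(4)}(m_*)}{24}u^4 + t p m_*^{p-1}u$, that is, Theorem \ref{cltun} yields $N^{1/4}(\os - m_*) \xrightarrow{D} U_t \sim F_{t,0}$ with $F_{t,0}$ as in \eqref{eq:beta_h_distribution}. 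Using the perturbed concentration estimates underpinning Theorem \ref{cltun} (the analogue of Lemma \ref{irrconch}) to obtain uniform integrability of $N^{1/4}(\os - m_*)$ over the relevant perturbations, this upgrades to $N^{1/4}\big(\e_{\beta + tN^{-3/4}, h, p}(\os^p) - m_*^p\big) \to p m_*^{p-1}\int_{-\infty}^{\infty} u\,\mathrm d F_{t,0}(u)$.

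Putting the two together, the event $\{\os^p \leq \e_{\beta + tN^{-3/4},h,p}(\os^p)\}$ becomes $\{N^{1/4}(\os^p - m_*^p) \leq c_N\}$ with deterministic $c_N \to p m_*^{p-1}\int u\,\mathrm d F_{t,0}(u)$, and since $F_{0,0}$ has a smooth density by \eqref{eq:meanclt_III}, the probability converges to $\p\big(p m_*^{p-1}U_0 \leq p m_*^{p-1}\int u\,\mathrm d F_{t,0}(u)\big) = F_{0,0}\big(\int_{-\infty}^{\infty} u\,\mathrm d F_{t,0}(u)\big) = G_2(t)$ when $m_* > 0$; the case $m_* < 0$, which arises only at $\tau_p^-$ for even $p$, reduces to the $\tau_p^+$ case via the $\bs \mapsto -\bs$, $h \mapsto -h$ symmetry of the model, under which $F_{t,0}$ is reflected accordingly. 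One also checks that $t \mapsto \int u\,\mathrm d F_{t,0}(u)$ is continuous and strictly increasing, so that $G_2$ is a bona fide continuous distribution function and the convergence above holds at every $t$.

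The main obstacle I anticipate is the uniform-integrability step: convergence in distribution of $N^{1/4}(\os - m_*)$ under the perturbed laws does not by itself control $\e_{\beta + tN^{-3/4},h,p}(\os^p)$ at the $N^{1/4}$ resolution, so one needs Gaussian-type tail bounds on $\os$ at scale $N^{1/4}$ around $m_*$ that are uniform over the relevant family of perturbations — precisely the estimates behind the concentration lemmas (Lemma \ref{irrconch} and its perturbed version) and the partition-function approximations (Lemma \ref{ex2}). A secondary point is the bookkeeping around non-existence of the MLE: one must confirm that near a $p$-special point $\hat\beta_N$ stays finite with probability tending to one, which follows from the consistency $\hat\beta_N \pto \beta$ together with $\os^p$ concentrating at $m_*^p$, strictly interior to the range of $b \mapsto \e_{b,h,p}(\os^p)$.
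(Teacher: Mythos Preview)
Your approach is essentially the same as the paper's: rewrite $\{N^{3/4}(\hat\beta_N-\beta)\le t\}$ via the monotonicity of $b\mapsto \e_{b,h,p}(\os^p)$, linearize $\os^p$ around $m_*\ne 0$, and invoke Theorem~\ref{cltun}(3) under both $\p_{\beta,h,p}$ and the perturbed law. The paper in fact omits the $p$-special case entirely, pointing to the $p$-regular proof (Theorem~\ref{thmmle1}) as a template, so your write-up is more detailed than what the paper provides.

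One simplification you are missing: the ``main obstacle'' you flag, namely controlling $\e_{\beta+tN^{-3/4},h,p}(\os^p)$ to $O(N^{-1/4})$ precision, is not an obstacle at all in this paper. Lemma~\ref{cltun133pr} establishes \emph{pointwise convergence of moment generating functions} of $N^{1/4}(\os-m_*)$ under every perturbation $\p_{\beta+\bar\beta N^{-3/4},\,h+\bar h N^{-3/4},\,p}$, and the paper explicitly uses the fact that this implies convergence of all moments (see the line following \eqref{mpl1} in the regular-case proof). So you get $\e_{\beta+tN^{-3/4},h,p}\big[(\os-m_*)^s\big]=O(N^{-s/4})$ for every $s\ge 1$ for free, and the binomial expansion of $N^{1/4}(\os^p-m_*^p)$ converges termwise both in law and in expectation. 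No separate uniform-integrability argument is needed.

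Your treatment of the sign of $pm_*^{p-1}$ and the symmetry reduction for $\tau_p^-$ is more careful than the paper's; the paper does not comment on this point.
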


	Finally, we consider the case $(\beta, h)$ is $p$-critical. The situation here is quite delicate, depending on various things like weak and strong criticality, parity of $p$, and the sign of the field $h$. 
	
	\begin{thm}[Asymptotic distribution of $\hat{\beta}_N$ at $p$-critical points]\label{thmmle_III}
		Fix $p \geq 3$ and suppose $(\beta, h) \in \Theta$ is $p$-critical. Assume $h$ is known and $\bs \sim \mathbb{P}_{\beta,h,p} $. Denote the $K \in \{2,3\}$ maximizers of $H$ by $m_1 := m_1(\beta, h, p) < \ldots < m_K := m_K(\beta, h, p)$, and let $p_1, \ldots, p_K$ be as in \eqref{eq:p1}. 
		
		\begin{itemize}
			
			\item[$(1)$] Suppose $p \geq 3$ is odd. In this case, the function has exactly two maximizers  $m_1 < m_2$. Then, as $N \rightarrow \infty$, the following hold: 
			
			\begin{enumerate}
				\item[$\bullet$] If $(\beta,h) \neq (\tilde{\beta}_p,0)$, where $\tilde{\beta}_p$ is defined in \eqref{eq:betatilde}, then 
				\begin{align}\label{eq:bmle_multimax_1} 
					N^{\frac{1}{2}}(\hat{\beta}_N - \beta)\xrightarrow{D} \tfrac{p_1}{2}N^-\left(0,-\frac{H''(m_1)}{p^2m_1^{2p-2}}\right)+ \tfrac{1-p_1}{2}N^+\left(0,-\frac{H''(m_2)}{p^2m_2^{2p-2}}\right)+\tfrac{1}{2}\delta_0.
				\end{align} 
				
				\item[$\bullet$] If $(\beta,h) = (\tilde{\beta}_p,0)$, then 
				\begin{align}\label{eq:bmle_multimax_2} 
					N^{\frac{1}{2}}(\hat{\beta}_N - \beta)\xrightarrow{D} \tfrac{p_1}{2}\delta_{-\infty} + \tfrac{1-p_1}{2}N^+\left(0,-\frac{H''(m_2)}{p^2m_2^{2p-2}}\right)+\tfrac{1}{2}\delta_0. 
				\end{align} 
			\end{enumerate}
			
			\item[$(2)$] Suppose $p \geq 4$ is even. Then the following hold, as $N \rightarrow \infty$:   
			\begin{enumerate}
				\item[$\bullet$] If $h > 0$, then 
				\begin{align}\label{eq:bmle_multimax_3}
					N^{\frac{1}{2}}(\hat{\beta}_N - \beta)\xrightarrow{D} \tfrac{p_1}{2}N^-\left(0,-\frac{H''(m_1)}{p^2m_1^{2p-2}}\right)+ \tfrac{1-p_1}{2}N^+\left(0,-\frac{H''(m_2)}{p^2m_2^{2p-2}}\right)+\tfrac{1}{2}\delta_0.
				\end{align}
				
				\item[$\bullet$]  If $h < 0$, then 
				\begin{align}\label{eq:bmle_multimax_4} 
					N^{\frac{1}{2}}(\hat{\beta}_N - \beta)\xrightarrow{D} \tfrac{p_1}{2}N^+\left(0,-\frac{H''(m_1)}{p^2m_1^{2p-2}}\right)+ \tfrac{1-p_1}{2}N^-\left(0,-\frac{H''(m_2)}{p^2m_2^{2p-2}}\right)+\tfrac{1}{2}\delta_0. 
				\end{align}
				
				\item[$\bullet$] If $h=0$ and $\beta> \tilde{\beta}_p$, there are exactly two maximizers $m_1=-m_*$ and $m_2=m_*$ of $H$, where $m_* = m_*(\beta, h, p)> 0$. In this case, 
				\begin{align}\label{eq:bmle_multimax_5} 
					N^{\frac{1}{2}}(\hat{\beta}_N - \beta)\xrightarrow{D} N\left(0,-\frac{H''(m_*)}{p^2m_*^{2p-2}}\right).
				\end{align}
				
				\item[$\bullet$] If $h=0$ and $\beta = \tilde{\beta}_p$, there are exactly three maximizers $m_1 = -m_*$, $m_2=0$, and $m_3 = m_*$ of $H$, where $m_*=m_*(\beta, h, p) > 0$. In this case, \begin{align}\label{eq:bmle_multimax_6} 
					N^{\frac{1}{2}}(\hat{\beta}_N - \beta)\xrightarrow{D} p_2 \gamma_p \delta_{-\infty} + p_1N^+\left(0,-\frac{H''(m_*)}{p^2m_*^{2p-2}}\right) + (1-p_1-p_2\gamma_p) \delta_0,
				\end{align} 
				where $\gamma_p := \p(Z^p \leq \e Z^p)$ and $Z$ is a standard normal random variable.
			\end{enumerate}
		\end{itemize}	
	\end{thm}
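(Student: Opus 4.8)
The plan is to reduce the asymptotics of $\hat\beta_N$ to the already-established (perturbed) asymptotics of the average magnetization. For fixed $h$, the log-likelihood $\beta\mapsto\log\p_{\beta,h,p}(\bs)=\beta N\os^p+hN\os-F_N(\beta,h,p)-N\log 2$ has second derivative $-N^2\var_{\beta,h,p}(\os^p)$, strictly negative for $N\geq 2$, hence is strictly concave, so $\hat\beta_N$ is the unique stationary point, i.e.\ the unique solution of \eqref{eqmle} when one exists, with the convention that $\hat\beta_N$ diverges otherwise. Writing $\psi_N(b):=\e_{b,h,p}(\os^p)$, a deterministic function with $\partial_b\psi_N(b)=N\var_{b,h,p}(\os^p)\geq 0$ (so $\psi_N$ is nondecreasing), strict concavity gives, for every $b\in\R$,
\begin{align}\label{eq:pf_master_beta}
\p_{\beta,h,p}\!\left(\hat\beta_N\leq b\right)=\p_{\beta,h,p}\!\left(\os^p\leq\psi_N(b)\right),
\end{align}
where $\os\sim\p_{\beta,h,p}$. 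Thus it suffices, for each fixed $t\in\R$, to compute $\lim_N\p_{\beta,h,p}\big(\os^p\leq\psi_N(\beta+tN^{-1/2})\big)$; the limiting law of $N^{1/2}(\hat\beta_N-\beta)$ is then read off from the resulting distribution function, with mass escaping to $-\infty$ producing the $\delta_{-\infty}$ components (escape to $+\infty$ cannot occur, as $\os^p$ stays bounded away from $1\geq\lim_{b\to\infty}\psi_N(b)$).

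The key step is a sharp expansion of $c_N(t):=\psi_N(\beta+tN^{-1/2})=\e_{\beta+tN^{-1/2},h,p}(\os^p)$, via the partition-function estimates of Appendix \ref{approx}, the perturbed magnetization limit (Theorem \ref{cltun}), and the localization Lemma \ref{redunsup}. The mechanism is that perturbing $\beta$ to $\beta+tN^{-1/2}$ raises $H_{\beta,h,p}$ at each global maximizer $m_k$ by $tN^{-1/2}m_k^p+O(N^{-1})$ (by the envelope theorem, as each $m_k$ is a critical point), so the $k$-th Gibbs weight is multiplied by $\exp\{t\sqrt N\,m_k^p+O(1)\}$. Hence, unless $t=0$ or $p$ is even and the maximizers surviving the perturbation are symmetric about $0$ (so their $m_k^p$ coincide), $\p_{\beta+tN^{-1/2},h,p}$ localizes entirely at the single maximizer $m_{k^\star}$ with $k^\star\in\arg\max_k\{t\,m_k^p\}$, and a first-order expansion of the corresponding maximizer of $H_{\beta+tN^{-1/2},h,p}$ yields
\begin{align}\label{eq:pf_cN_beta}
c_N(t)=m_{k^\star}^{\,p}-tN^{-1/2}\,\frac{p^2m_{k^\star}^{2p-2}}{H''(m_{k^\star})}+o\!\left(N^{-1/2}\right).
\end{align}
In the degenerate regimes where a maximizer equals $0$ — i.e.\ at $\lambda_p$ for $p$ even and at $(\tilde\beta_p,0)$ for $p$ odd — the perturbed measure may instead localize near $0$, where $N^{1/2}\os\xrightarrow{D}N(0,1)$ since $H_{\beta,0,p}''(0)=-1$ for $p\geq3$; then $c_N(t)$ is of the smaller order $N^{-p/2}$ with $N^{p/2}c_N(t)\to\e(Z^p)$, $Z\sim N(0,1)$, which is the origin of the constant $\gamma_p$.

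Finally one combines \eqref{eq:pf_cN_beta} with the critical-point limit for $\os$, Theorem \ref{cltintr1}$(2)$: under $\p_{\beta,h,p}$ the magnetization localizes at $m_k$ with probability $p_k$, and conditionally on $\os$ near $m_k$ the delta method gives $N^{1/2}(\os^p-m_k^p)\xrightarrow{D}N(0,v_k)$ with $v_k:=-p^2m_k^{2p-2}/H''(m_k)>0$ (positivity by Lemma \ref{derh11} and Lemma \ref{derh22}). Decomposing $\p_{\beta,h,p}(\os^p\leq c_N(t))$ over these neighborhoods: branches with $m_k^p<m_{k^\star}^p$ contribute their full mass $p_k$, the branch $m_{k^\star}$ contributes $p_{k^\star}\Phi(t\sqrt{v_{k^\star}})$, branches with $m_k^p>m_{k^\star}^p$ contribute $0$, and in the degenerate regimes the branch at $0$ contributes either $p_k\gamma_p$ (for $t<0$, where $c_N(t)$ sits at scale $N^{-p/2}$) or its full mass $p_k$ (for $t>0$, where $c_N(t)$ is of order one). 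Assembling over $t<0$, $t=0$, $t>0$ gives a distribution function that is flat on the negative axis above a mass escaping to $-\infty$, jumps at $0$ by the stated amount, and is a positive or negative half-normal elsewhere — or, in the fully symmetric even-$p$ case \eqref{eq:bmle_multimax_5}, a single full normal — and reading off the weights and variances gives \eqref{eq:bmle_multimax_1}--\eqref{eq:bmle_multimax_6}. The sign of $h$ enters only through the ordering of $m_1^p$ and $m_2^p$ (via Lemma \ref{derh11}), which dictates whether $m_1$ or $m_2$ pairs with the $N^+$ versus the $N^-$ component, while the symmetry of the model under $(\beta,h,\bs)\mapsto(\beta,-h,-\bs)$ reduces \eqref{eq:bmle_multimax_4} to \eqref{eq:bmle_multimax_3}. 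The point mass $\tfrac12\delta_0$ (respectively $(1-p_1-p_2\gamma_p)\delta_0$) appears because $t\mapsto\lim_N c_N(t)$ is discontinuous at $t=0$ — the localization point of $\os$ under $\p_{\beta+tN^{-1/2},h,p}$ jumps there — exactly the mechanism behind the point mass in Theorem \ref{cltintr3_III}.

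I expect the genuinely hard part to be the degenerate regimes near a strongly critical point, i.e.\ \eqref{eq:bmle_multimax_2} and \eqref{eq:bmle_multimax_6} (and similarly \eqref{eq:bmle_m_2}): there $\psi_N$ is, to leading order, $\beta$-independent and equal to $N^{-p/2}\e(Z^p)$ over a whole range of $\beta$ below $\tilde\beta_p$, and climbs from this tiny value to the order-one value $\approx m_*^p$ only across a transition window of width $O(N^{-1})$. Controlling $\psi_N$ uniformly across this window beyond its leading order, correctly comparing the two scales $N^{-p/2}$ and $N^{-1/2}$, and turning ``$\psi_N(\beta)=\os^p$ has no solution in $[-M,\infty)$'' into ``$\hat\beta_N<-M$'' (which produces the $\delta_{-\infty}$ mass $p_2\gamma_p$) requires the refined partition-function asymptotics of Lemma \ref{lm:condpart} and Appendix \ref{approx}, and is the technical core of the argument. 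The remaining cases follow the template above with only bookkeeping changes.
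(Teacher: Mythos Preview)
Your approach is correct and matches the paper's proof closely: the master identity \eqref{eq:pf_master_beta}, the localization of $\p_{\beta+tN^{-1/2},h,p}$ at the maximizer $m_{k^\star}$ with largest $t\,m_k^p$, the delta-method expansion $N^{1/2}(\os^p-m_k^p)\xrightarrow{D}N(0,v_k)$ on each branch, and the $N^{-p/2}$ scaling when the selected maximizer is $0$ are exactly the paper's ingredients. Two small remarks. First, the localization lemma you cite as Lemma~\ref{redunsup} is for $h$-perturbations; for $\beta$-perturbations the paper uses the separate Lemmas~\ref{redunsup2} (odd $p$) and~\ref{redunsup3} (even $p$), the latter also supplying the symmetric splitting needed for \eqref{eq:bmle_multimax_5} and \eqref{eq:bmle_multimax_6}. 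Second, the degenerate cases are less delicate than your final paragraph suggests: there is no need to control $\psi_N$ uniformly across a transition window or to argue that the ML equation has no solution. One simply shows, for each fixed $t<0$, that $N^{p/2}c_N(t)\to\e Z^p$ (via the conditional moment convergence from Theorem~\ref{cltun} on the branch at $0$, combined with exponential decay of the other branches), and that $N^{p/2}\os^p\xrightarrow{D}Z^p$ on that branch; the limit CDF then equals $p_2\gamma_p$ (resp.\ $p_1/2$) for \emph{every} $t<0$, which by itself forces the mass at $-\infty$. This is how the paper handles \eqref{eq:bmle_multimax_2} and \eqref{eq:bmle_multimax_6}.
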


	The proofs of the above results are given in Section \ref{sec:mlebetahpf}. 
	Below, we summarize the main consequences of the above results and highlight the various new phenomena that emerge as one moves from the 2-spin to the $p$-spin case.

	\begin{itemize}

		\item  For $p$-regular points, Theorem \ref{thmmle1} shows that when the unique maximizer $m_* \ne 0$, then $\hat{\beta}_N$ is consistent at rate $N^{\frac{1}{2}}$ with a limiting normal distribution. On the other hand, when $m_* = 0$, which happens in the interval  $[0, \tilde \beta_p)$, the ML estimate $\hat{\beta}_N$ is inconsistent. In this regime, when $p \geq 3$ is odd, then $\hat{\beta}_N$ concentrates at $\pm \bm \tilde \beta_p$ with probability $\frac{1}{2}$, irrespective of the value of true value of $\beta \in  [0, \tilde \beta_p)$. The situation is even more strange when $p \geq 4$ is even. Here,  $\hat{\beta}_N$ concentrates at either $\bm \tilde \beta_p$ or escapes to negative infinity, that is, with positive probability $\hat{\beta}_N$ is unbounded, when $p \geq 4$ and $\beta \in  [0, \tilde \beta_p)$. The corresponding results for $p=2$ are similar in the sense that, for $\beta \in [0, 0.5)$ (recall that $\tilde{\beta_2}=0.5$), the ML estimate $\hat{\beta}_N$ is inconsistent. However, unlike in the case for $p\geq 4$ even, the ML estimate $\hat{\beta}_N$, when $p=2$, is always finite and converges to a (properly centered and rescaled) chi-squared distribution \cite[Theorem 1.4]{comets}.

		\item For $p$-special points Theorem \ref{thmmle2} shows that $\hat{\beta}_N$  converges to $\beta$ at rate $N^{-\frac{3}{4}}$, that is, it is superefficient. Recall that the same thing happens for $\hat h_N$ at $p$-special points (Theorem \ref{cltintr3_2}). In comparison, for $p=2$ at the only 2-special $(0.5, 0)$,  $\hat{h}_N$ is superefficient with rate $N^{-\frac{3}{4}}$ \cite[Theorem 1.3]{comets}, but $\hat{\beta}_N$ remains $N^{\frac{1}{2}}$-consistent \cite[Theorem 1.4]{comets}. This is because when $(\beta,h)$ is $p$-special, the unique maximizer $m_*$ of $H_{\beta,h,p}$ is $0$ when $p=2$, but non-zero, for $p \geq 3$. This creates a difference in the rate of convergence of the maxima of $H_{\beta_N,h_N,p}$ towards the maximum of $H_{\beta,h,p}$ for some suitably chosen perturbation $(\beta_N,h_N)$ of $(\beta,h)$, which is an important step in deriving the asymptotic rate of convergence of the ML estimates. Another interesting difference is that for $p=2$, the only 2-special point $(0.5, 0)$ coincides with the thermodynamic threshold of the 2-spin Curie-Weiss model. However, for $p\geq 3$, the $p$-special points (the point $(\check{\beta}_p,\check{h}_p)$, for $p \geq 3$ odd, and the points 
		$(\check{\beta}_p, \pm \check{h}_p)$, for $p \geq 4$ even), where we get the non-Gaussian limits of $\os$, ${\hat h}_N$, and ${\hat \beta}_N$, have nothing to do with the thermodynamic threshold of the $p$-spin Curie-Weiss model, but rather depends on the vanishing property of the second derivative of $H$ at its maximizer. On the contrary,  quite remarkably, the thermodynamic threshold $(\tilde{\beta}_p, 0)$ of the $p$-spin Curie-Weiss model (recall definition in \eqref{eq:betatilde}) turns out to be a $p$-weakly critical point for $p \geq 3$ odd, and the only $p$-strongly critical point for $p \geq 4$ even, another unexpected phenomenon unearthed by our results.

		\item  The landscape is much more delicate for $p$-critical points, as can be seen from Theorem \ref{thmmle_III}. In this case, the limiting distribution of $\hat{\beta}_N$ converges to various mixture distributions, depending on, among other things, the sign of $h$ and the parity of $p$. As in the case of $\hat{h}_N$, a particularly interesting new phenomena is the three component mixture that arises in the limiting distribution of $\hat{\beta}_N$ when the critical curve $\sC_p^+$ intersects the region $h \ne 0$. This corresponds to the result \eqref{eq:bmle_multimax_1} for $p \geq 3$ odd, and results in \eqref{eq:bmle_multimax_3} and \eqref{eq:bmle_multimax_4}  for $p \geq 4$ even. Recall, from the discussion following Theorem \ref{cltintr3_III}, that this does not happen for $p=2$, because, in this case, $\sC_p^+=(0.5, \infty)\times \{0\}$, hence, the two maximizers at any 2-critical point are symmetric about zero, and the two half normal mixing components combine to form a single Gaussian. As a result, the limit is the mixture of a single normal and a point mass at zero. Interestingly, this also happens for $p \geq 4$ even, when the critical curve intersects the line $h=0$ and is strictly above the threshold $\tilde{\beta}_p$, as seen in  \eqref{eq:bmle_multimax_5} above.

		\item The final bit in the puzzle is the point of thermodynamic phase transition $(\tilde \beta_p, 0)$. Here, the ML estimate $\hat{\beta}_N$  is not $N^{\frac{1}{2}}$-consistent. More precisely, in the limit, $N^{\frac{1}{2}}(\hat \beta_N-\beta)$ has a point mass at negative infinity with positive probability,  and is a mixture of a folded normal and a point mass with the remaining probability (as described in \eqref{eq:bmle_multimax_2} and \eqref{eq:bmle_multimax_6}). In contrast, as explained in the second case above, when $p=2$, then at the point of thermodynamic phase transition ($\tilde{\beta}_2=0.5$) the ML estimate ${\hat \beta}_N$ is $N^{\frac{1}{2}}$-consistent. 
		
	\end{itemize}

	\subsection{Summarizing the Phase Diagram}  
	\label{sec:mle_beta_h_II}

	The results above can be compactly summarized and better visualized in a phase diagram, which shows the  partition of the parameter space described in \eqref{eq:parameter_space}. The phase diagrams for $p=4$ and $p=5$, obtained by numerical optimization of the function $H$ over a fine grid of parameter values, are shown in Figure \ref{figure:ordering1} and Figure \ref{figure:ordering2}, respectively.   The limiting distributions that arise in the different regions of the phase diagram are described in the figure legends.

	\begin{figure}[h]
		\centering
		\begin{minipage}[c]{1.0\textwidth}
			\centering
			\includegraphics[width=6.25in,height=3.05in]
			{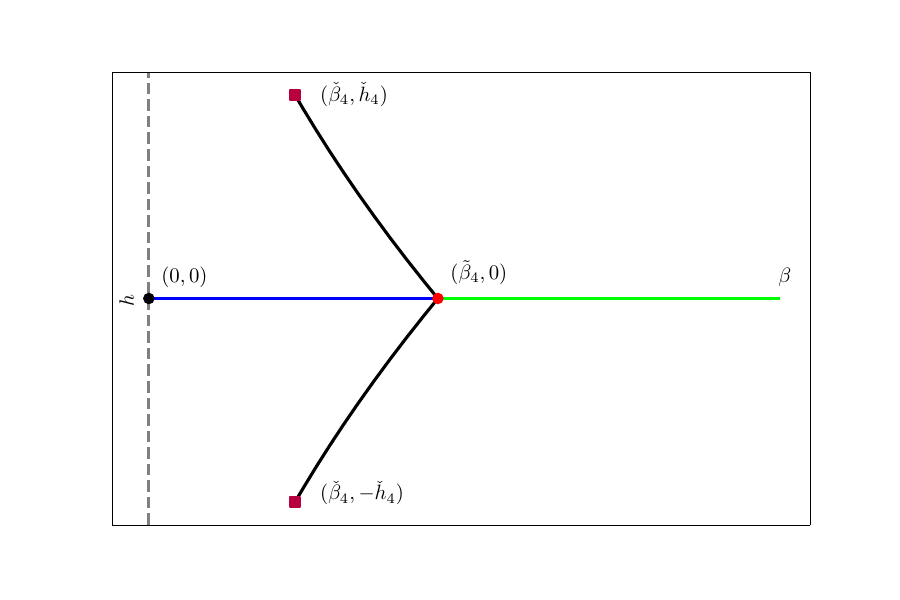}\\
			\caption{\small{The phase diagram for $p=4$: The properties of the ML estimates in the  different regions of the parameter space $\Theta= [0, \infty) \times \R$ are as follows: }}
			\label{figure:ordering1}
			\small{
				\begin{itemize} 
					\item[] 
					\begin{itemize}
						
						\item The $\mysquare[white]$ (white) region: These are the $p$-regular points where $H$ has a unique global maximizer $m_* \ne 0$  and $H''(m_*) < 0$.  Hence, $\hat \beta_N$ and $\hat h_N$  are both $N^{\frac{1}{2}}$-consistent and asymptotically normal, by \eqref{eq:bmle_m_1} and \eqref{eq:h_1}, respectively. 
						
						\item The \textcolor{blue}{\rule{0.75cm}{0.75mm}} line: These are the $p$-regular points where $H$ has a unique global maximizer $m_* = 0$  and $H''(0) < 0$. Hence, $\hat \beta_N$ is inconsistent by \eqref{eq:bmle_m_2},  but $\hat h_N$ is $N^{\frac{1}{2}}$-consistent and asymptotically normal by \eqref{eq:h_1}. 
						
						\item The $\mysquare[purple]$ points: These are the $p$-special points. Here, $H$ has a unique maximizer $m_*$, but $H''(m_*)=0$. Hence,  $\hat \beta_N$ and $\hat h_N$  are both superefficient, converging at rate $N^{\frac{3}{4}}$ to non-Gaussian distributions, by \eqref{eq:bmle_m_3} and \eqref{eq:h_5}, respectively.

						\item The \textcolor{black}{\rule{0.75cm}{0.75mm}} curve: These are $p$-weakly critical points where $h \ne 0$. Here, $H$ has two global (non-symmetric) maximizers. Both $\hat \beta_N$ and $\hat h_N$  are $N^{\frac{1}{2}}$-consistent and asymptotically a three component mixture (comprising of two half normal normal distributions and a point mass at zero), by  \eqref{eq:bmle_multimax_3}, \eqref{eq:bmle_multimax_4},  and \eqref{eq:h_3}, respectively.

						\item The \textcolor{green}{\rule{0.75cm}{0.75mm}} line: These are $p$-weakly critical points where $h = 0$. Here, $H$ has two global symmetric maximizers. Hence, $\hat \beta_N$ is $N^{\frac{1}{2}}$-consistent and asymptotically normal by \eqref{eq:bmle_multimax_5}, and $\hat h_N$ is $N^{\frac{1}{2}}$-consistent and asymptotically a mixture of a normal distribution and a point mass at zero, by \eqref{eq:h_4}. 
						
						\item The \tikz\draw[red,fill=red] (0,0) circle (.8ex); point: This is the $p$-strongly critical point. Here, $H$ has three global maximizers. Hence, $\hat{\beta}_N$ is not $N^{\frac{1}{2}}$-consistent, by \eqref{eq:bmle_multimax_6}, but $\hat{h}_N$ is $N^{\frac{1}{2}}$-consistent and asymptotically a mixture of normal distribution and point mass at 0, by \eqref{eq:h_2}. 	
					\end{itemize}
				\end{itemize}
			}
		\end{minipage}
	\end{figure}

	\begin{figure}[h]
		\centering
		\begin{minipage}[c]{1.0\textwidth}
			\centering
			\includegraphics[width=6.25in,height=3.05in]
			{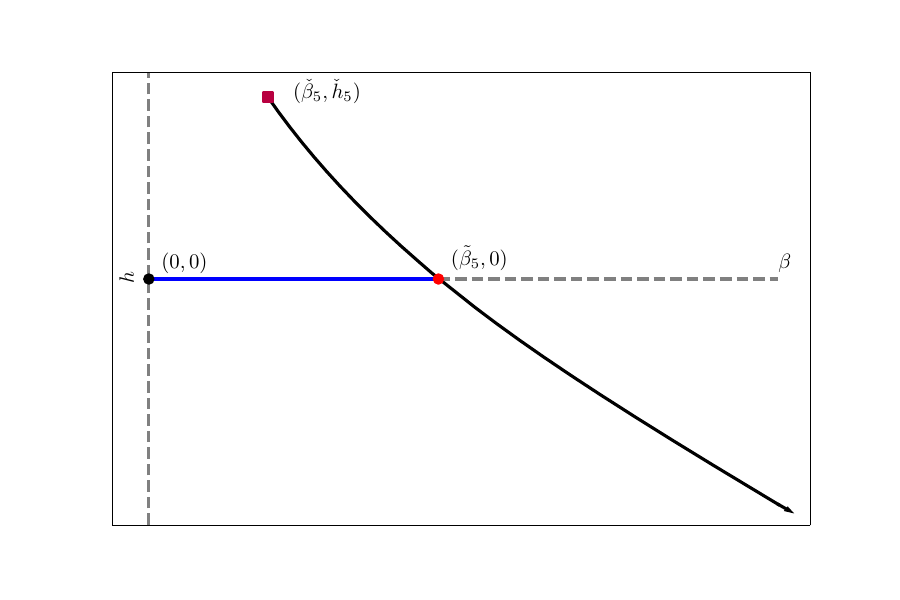}\\
			\caption{\small{The phase diagram for $p=5$: The properties of the ML estimates in the  different regions of the parameter space $\Theta= [0, \infty) \times \R$ are as follows: }}
			\label{figure:ordering2}
			\small{
				\begin{itemize} 
					\item[] 
					\begin{itemize}
						
						\item The $\mysquare[white]$ (white) region: These are the $p$-regular points where $H$ has a unique global maximizer $m_* \ne 0$  and $H''(m_*) < 0$.  Hence, $\hat \beta_N$ and $\hat h_N$  are both $N^{\frac{1}{2}}$-consistent and asymptotically normal, by \eqref{eq:bmle_m_1} and \eqref{eq:h_1}, respectively. 
						
						\item The \textcolor{blue}{\rule{0.75cm}{0.75mm}} line: These are the $p$-regular points where $H$ has a unique global maximizer $m_* = 0$  and $H''(0) < 0$. Hence, $\hat \beta_N$ is inconsistent by \eqref{eq:bmle_m_2},  but $\hat h_N$   $N^{\frac{1}{2}}$-consistent and asymptotically normal by \eqref{eq:h_1}. 
						
						\item The $\mysquare[purple]$ point: This is the only $p$-special point. Here, $H$ has a unique maximizer $m_*$, but $H''(m_*)=0$. Hence,  $\hat \beta_N$ and $\hat h_N$  are both superefficient, converging at rate $N^{\frac{3}{4}}$ to non-Gaussian distributions, by \eqref{eq:bmle_m_3} and \eqref{eq:h_5}, respectively.

						\item The \textcolor{black}{\rule{0.75cm}{0.75mm}} curve: These are $p$-weakly critical points where $h \ne 0$. Here, $H$ has two global (non-symmetric) maximizers. Both, $\hat \beta_N$ and $\hat h_N$  are $N^{\frac{1}{2}}$-consistent and asymptotically a three component mixture (comprising two half normal normal distributions and a point mass at zero), by  \eqref{eq:bmle_multimax_1} and \eqref{eq:h_31}, respectively.

						\item The \tikz\draw[red,fill=red] (0,0) circle (.8ex); point: This is the $p$-weakly critical point with $h=0$. Here, $H$ has two (non-symmetric) global maximizers. Hence, $\hat h_N$  is asymptotically a three component mixture by  \eqref{eq:h_31}, but $\hat{\beta}_N$ is not $N^{\frac{1}{2}}$-consistent by \eqref{eq:bmle_multimax_2}. 
					\end{itemize}
				\end{itemize}
			}
		\end{minipage}
	\end{figure}

	\section{Proofs of the Main Results} 
	\label{sec:mlebetahpf}


	In this section we prove the results described in Sections \ref{sec:mle_h}  and  \ref{sec:mle_beta}. 
	To this end, recall the ML equations \eqref{eqmle} and \eqref{eqmleh}, and for notational convenience, we introduce the following definition , for $m \geq 1$: $$u_{N,m}(\beta, h, p) := \e_{\beta,h,p}  \overline{\sigma}^m_N. $$ 
The derivation the asymptotic distribution of the ML estimates proceeds as follows:

	\begin{itemize}
		
		\item  The first step is to express the distribution functions of $\hat{\beta}_N$ and $\hat{h}_N$ in terms of the average magnetization $\os$. This follows from the ML equations \eqref{eqmle} and \eqref{eqmleh} and the monotonicity of the function $u_{N,m}$ (proved in Lemma \ref{increasing}). To this end, define $a_N = N^{\frac{1}{2}}$, if $(\beta,h)$ not $p$-special and $a_N=N^{\frac{3}{4}}$ if $(\beta,h)$ is $p$-special. Now, note that, fixing $t \in \R$,  
		\begin{align}\label{intui1}
			\left\{a_N(\hat{h}_N -h) \leq t\right\} = \left\{\hat{h}_N  \leq h + \frac{t}{a_N}\right\} 
			&= \left\{\os \leq \e_{\beta, h+ \frac{t}{a_N},p}  (\os)\right\},
		\end{align}
		by the monotonicity of the function $u_{N, 1}(\beta, \cdot, p)$ (using Lemma \ref{increasing}) and the ML equation \eqref{eqmleh}. Similarly, 
		\begin{align}\label{intui2}
			\left\{a_N(\hat{\beta}_N-\beta) \leq t\right\} = \left\{\hat{\beta}_N \leq \beta + \frac{t}{a_N}\right\} 
			&= \left\{\overline{\sigma}^p_N \leq \e_{\beta+ \frac{t}{a_N},h,p} (\overline{\sigma}^p_N)\right\}.
		\end{align}
		by the monotonicity of the function $u_{N, p}(\cdot, h, p)$ (using Lemma \ref{increasing}) and the ML equation \eqref{eqmle}. 
		
		\item The next step is to write the event in \eqref{intui1}  as $$\left\{\frac{N}{a_N}(\os-c) \leq \e_{\beta, h+ \frac{t}{a_N},p}  \left[ \frac{N}{a_N} (\os-c)\right]\right\},$$ 
		for some appropriately chosen centering $c$, and similarly, for the event \eqref{intui2}. Now, if the point $(\beta,h)$ is $p$-regular or $p$-special, the centering $c$ will be the unique global maximizer of $H_{\beta,h,p}$, around which $\os$ concentrates. However, if $(\beta,h)$ is $p$-critical, then the situation is more tricky. In that case, one needs to look at the sign of $t$, and choose the centering $c$ to be that global maximizer of $H_{\beta,h,p}$ around which $\os$ concentrates, under the measure $\p_{\beta, h+ t/a_N, p}$ (for $\hat{h}_N$), and the measure $\p_{\beta+t/a_N, h,p}$ (for $\hat{\beta}_N$). Therefore, to obtain the limiting distribution of the ML estimates we need to derive the asymptotic distribution of $\os$ at perturbed parameter values $(\beta, h+ t/a_N)$ and $(\beta+t/a_N, h)$, for some appropriately chosen sequence $a_N \rightarrow \infty$. This is described in Theorem \ref{cltun} below.  
		
	\end{itemize}

Suppose $\{(\beta_N, h_N)\}_{N \geq 1}$ is a sequence of parameters such that $\beta_N \rightarrow \beta$ and $h_N \rightarrow h$. For notational convenience, henceforth we will denote $\p_{\beta_N, h_N,p}, ~Z_N(\beta_N,h_N,p)$, and $F_N(\beta_N,h_N,p)$,  by $\bar{\p}, \bar{Z}_N$,  and $\bar{F}_N$, respectively. The asymptotic distribution of $\os$ in the different cases at the appropriately perturbed parameter values is given in the following result. The proof is given in Appendix \ref{perturbed_asm}.

	\begin{thm}[Asymptotic distribution of $\os$ under perturbed parameters]\label{cltun} Fix $p \geq 3$, $(\beta, h) \in \Theta$, and $\bar{\beta},\bar{h}\in \mathbb{R}$. Then with $H=H_{\beta,p,h}$ as defined in \eqref{eq:H} the following hold: 
		
		\begin{itemize}
			
			\item[$(1)$] Suppose $(\beta, h)$ is $p$-regular and denote the unique maximizer of $H$ by $m_*= m_*(\beta,h,p)$. Then, for $\bs \sim \mathbb{P}_{\beta+N^{-\frac{1}{2}}\bar{\beta},~h + N^{-\frac{1}{2}}\bar{h},~p}$, as $N \rightarrow \infty$,
			\begin{align}\label{eq:cltun_I} 
				N^{\frac{1}{2}}\left(\os - m_*(\beta,h,p)\right)\xrightarrow{D} N\left(-\frac{\bar{h}+\bar{\beta}pm_*(\beta,h,p)^{p-1}}{H''(m_*)},~-\frac{1}{H''(m_*)}\right). 
			\end{align} 
			
			\item[$(2)$] Suppose $(\beta, h)$ is $p$-critical and denote the $K \in \{2, 3\}$ maximizers  of $H$ denoted by $m_1:=m_1(\beta,h,p),$$\ldots,m_K:=m_K(\beta,h,p)$. Then, for $\bs \sim \P_{\beta, h, p}$, as $N \rightarrow \infty$, 
			\begin{align}\label{eq:cltun_p1}
				\os \xrightarrow{D} \sum_{k=1}^K p_k \delta_{m_k}, 
			\end{align}
			where $p_1, \ldots, p_K$ are as defined in \eqref{eq:p1}. Moreover, if $m$ is any local maximizer of $H$ contained in the interior of an interval $A \subseteq [-1,1]$, such that $H(m) > H(x)$ for all $x\in A\setminus \{m\}$, then for $\bs \sim \mathbb{P}_{\beta+N^{-\frac{1}{2}}\bar{\beta},~h + N^{-\frac{1}{2}}\bar{h},~p}$, as $N \rightarrow \infty$, 
			\begin{equation}\label{eq:cltun_II}
				N^{\frac{1}{2}}\left(\os - m\right)\Big\vert \{ \os \in A\} \xrightarrow{D} N\left(-\frac{\bar{h}+\bar{\beta}pm^{p-1}}{H''(m)},~-\frac{1}{H''(m)}\right).
			\end{equation}

			\item[$(3)$] Suppose $(\beta, h)$ is $p$-special and denote the unique maximizer of $H$ by $m_*= m_*(\beta,h,p)$. Then, for $\bs \sim \mathbb{P}_{\beta+ N^{-\frac{3}{4}}\bar{\beta},~h + N^{-\frac{3}{4}}\bar{h},~p}$, as $N \rightarrow \infty$, 
			\begin{align}\label{eq:cltun_III} 
				N^\frac{1}{4}(\os - m_*(\beta,h,p)) \xrightarrow{D} F_{\bar{\beta},\bar{h}},
			\end{align} 
			where density of $F$ with respect to the Lebesgue measure is given by 
			\begin{align}\label{eq:beta_h_distribution}
				\frac{\mathrm dF_{\bar{\beta},\bar{h}} (x) }{\mathrm d x} ~\propto~ \exp\left(\frac{H^{(4)}(m_*)}{24} x^4 + (\bar{\beta}pm_*^{p-1} + \bar{h})x\right). 
			\end{align}
		\end{itemize} 
	\end{thm}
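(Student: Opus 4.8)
The starting point is the exact mass function of the average magnetization: under $\mathbb{P}_{\beta',h',p}$ one has $\mathbb{P}(\os = m) \propto \binom{N}{N(1+m)/2}\, e^{N(\beta' m^p + h' m)}$, and a Stirling expansion gives $\binom{N}{N(1+m)/2} = 2^N e^{-N I(m)}\,(\pi N(1-m^2)/2)^{-1/2}(1+o(1))$, uniformly on compact subsets of $(-1,1)$, with $I$ the binary entropy of \eqref{eq:H}. Hence, up to $m$-independent constants, $\mathbb{P}(\os = m)$ is governed by $e^{N H_{\beta',h',p}(m)}$ times the smooth correction $(1-m^2)^{-1/2}$, and writing $\beta' = \beta + \bar\beta\rho_N$, $h' = h + \bar h\rho_N$ (with $\rho_N = N^{-1/2}$ in the regular and critical cases, $\rho_N = N^{-3/4}$ in the special case) we have $N H_{\beta',h',p}(m) = N H(m) + N\rho_N(\bar\beta m^p + \bar h m)$, where $H = H_{\beta,h,p}$. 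The plan is then a Laplace/saddle-point analysis in three steps: (i) use the concentration estimates to restrict the relevant sums to a shrinking window around the maximizer(s) of $H$ --- Lemma \ref{conc} in the regular case, Lemma \ref{irrconch} in the special case, Lemma \ref{lem:multiple_max} in the critical case; (ii) evaluate the numerator and denominator of the moment generating function of the rescaled magnetization using the partition function estimates (Lemma \ref{ex}, Lemma \ref{ex2}, Lemma \ref{lm:condpart}) together with a Riemann-sum-to-integral comparison over the lattice of spacing $2/N$; (iii) identify the limiting moment generating function.

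For the \emph{regular} case, fix $t\in\mathbb{R}$ and expand $\mathbb{E}[\exp(tN^{1/2}(\os - m_*))]$ as a ratio of two window sums. Substituting $m = m_* + N^{-1/2}u$ and Taylor expanding, $N H(m) = N H(m_*) + \tfrac12 H''(m_*)u^2 + o(1)$ (the linear term vanishes as $H'(m_*)=0$), $N\rho_N(\bar\beta m^p + \bar h m) = N^{1/2}(\bar\beta m_*^p + \bar h m_*) + (\bar\beta p m_*^{p-1} + \bar h)u + o(1)$, and the exponential tilt contributes $tu$; the $N^{1/2}$-order pieces and all $m$-independent constants (including the correction $(1-m^2)^{-1/2}\to(1-m_*^2)^{-1/2}$) cancel in the ratio. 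The window sums become, after the Riemann-sum comparison, Gaussian integrals in $u$ with negative quadratic coefficient $H''(m_*)<0$, and a direct evaluation shows the ratio converges to $\exp\!\bigl(-t(\bar\beta p m_*^{p-1}+\bar h)/H''(m_*) - t^2/(2H''(m_*))\bigr)$, the moment generating function of the normal law in \eqref{eq:cltun_I}; since this holds for every $t$, convergence in distribution follows. The \emph{special} case is the identical computation carried out to fourth order: here $H''(m_*) = H^{(3)}(m_*) = 0$ and $H^{(4)}(m_*) < 0$ (this is furnished by Lemma \ref{derh22} together with $m_*$ being a maximizer), so with $\rho_N = N^{-3/4}$ and $m = m_* + N^{-1/4}u$ one gets $N H(m) = N H(m_*) + \tfrac{1}{24}H^{(4)}(m_*)u^4 + o(1)$, $N\rho_N(\bar\beta m^p + \bar h m) = N^{1/4}(\cdots) + (\bar\beta p m_*^{p-1}+\bar h)u + o(1)$, and the tilt $tN^{1/4}(\os - m_*)$ contributes $tu$; the limiting m.g.f.\ is $\int e^{\frac{1}{24}H^{(4)}(m_*)u^4 + (\bar\beta p m_*^{p-1}+\bar h+t)u}\,\mathrm du$ divided by the same with $t=0$, which is the m.g.f.\ of the density in \eqref{eq:beta_h_distribution}. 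The quartic decay $e^{-c u^4}$ guarantees this m.g.f.\ is finite for all $t$, so convergence of m.g.f.'s again yields \eqref{eq:cltun_III}.

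For the \emph{critical} case, decompose $[-1,1]$ into disjoint intervals $A_1,\ldots,A_K$, one around each global maximizer $m_k$. The unrescaled statement \eqref{eq:cltun_p1} follows by comparing, for $\bs\sim\mathbb{P}_{\beta,h,p}$, the total masses $\mathbb{P}(\os\in A_k)$: the leading Laplace contribution of the $k$-th window is $e^{N H(m_k)}\,\bigl[(1-m_k^2)(-H''(m_k))\bigr]^{-1/2}$ up to a factor common to all $k$, and since $H(m_1)=\cdots=H(m_K)$ (all are global maxima) the exponential factors cancel, leaving weights proportional to $\bigl[(m_k^2-1)H''(m_k)\bigr]^{-1/2}$, i.e.\ the $p_k$ of \eqref{eq:p1}; that $H''(m_k)<0$, so that the $p_k$ are well defined, is supplied by Lemma \ref{derh11} (and Lemma \ref{derh22} at the strongly critical point). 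For the conditional statement \eqref{eq:cltun_II}, conditioning on $\{\os\in A\}$ with $A$ containing a single local maximizer $m$ satisfying $H(m)>H(x)$ on $A\setminus\{m\}$ removes any competition between the maximizers, so the $O(e^{N^{1/2}\cdot\mathrm{const}})$ relative-weight shifts produced by the $N^{-1/2}$ perturbation are irrelevant; the ratio of tilted to untilted window sums over $A$ is then literally the regular-case computation localized to $A$, yielding the Gaussian in \eqref{eq:cltun_II}, with the conditional concentration inside $A$ provided by Lemma \ref{lem:multiple_max} and the restricted partition function by Lemma \ref{lm:condpart}.

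The main technical obstacle is step (ii): showing that replacing the discrete sums $\sum_m e^{N H_{\beta',h',p}(m)}(1-m^2)^{-1/2}$ over the spacing-$2/N$ lattice by the corresponding integrals incurs only a relative error $o(1)$ \emph{after} the $N^{1/2}$ (resp.\ $N^{1/4}$) rescaling --- this requires handling the Stirling correction terms uniformly over the shrinking window and a clean exponential bound on the contribution of $m$ outside the window. This is precisely what the partition-function estimates and the Riemann-sum bounds of Appendix \ref{approx} are built to deliver; once those are invoked, the remainder of the argument is the bookkeeping of the Taylor expansions above.
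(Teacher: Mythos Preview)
Your proposal is correct and follows the paper's overall strategy: write the moment generating function as a ratio of (tilted and untilted) partition functions, localize via the concentration lemmas, and evaluate by a Riemann-sum/Laplace approximation. The execution differs in one respect worth noting. You Taylor-expand $NH_{\beta',h',p}(m)$ directly around the \emph{unperturbed} maximizer $m_*$ and carry the $O(N\rho_N)$ perturbation as a linear tilt in $u$. The paper instead applies its partition-function estimate (Lemma~\ref{ex}, resp.\ Lemma~\ref{ex2}) to each of the two partition functions separately, centering each time at the corresponding \emph{perturbed} maximizer $m_*(\beta_N,h_N,p)$ and $m_*(\beta_N,h_N+tN^{-1/2},p)$, and then computes the difference of the two maxima via the implicit-function formulas of Lemma~\ref{derh2} (regular case) or the cube-root asymptotics of Lemma~\ref{mnminusm} (special case). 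Your route is shorter and, in the special case, lands directly on the density \eqref{eq:beta_h_distribution}; the paper first produces a less transparent expression involving an auxiliary function $\eta_{\bar\beta,\bar h,p}$ (Lemma~\ref{cltun133pr}) and then needs a separate change-of-variables computation (Lemma~\ref{wint}) to identify it with the m.g.f.\ of $F_{\bar\beta,\bar h}$. The paper's detour buys modularity --- the partition-function lemmas are stated once and reused verbatim for the ML-estimate proofs --- whereas your route effectively re-derives the piece of those lemmas needed here inline.
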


\begin{remark}
Note that by substituting $\bar{\beta}=0$ and $\bar{h}=0$ in Theorem \ref{cltun} one gets the limiting distribution of the average magnetization $\os$ at all fixed parameter values $(\beta, h) \in \Theta$, thus recovering the result in \cite[Theorem 2.1]{cmp}. The outline of the proof of Theorem \ref{cltun} is similar to that of \cite[Theorem 2.1]{cmp}, however, the calculations now are more delicate because the parameters depend on $N$. The details are given in Appendix \ref{perturbed_asm}. 
\end{remark}

We now show how Theorem \ref{cltun} can be used to derive the asymptotic distribution of the ML estimates $\hat{\beta}_N$ and $\hat{h}_N$. The rest of this section is organized as follows. The asymptotic distributions in the $p$-regular and $p$-special cases (which include Theorems \ref{cltintr3_1}, \ref{cltintr3_2}, \ref{thmmle1}, and \ref{thmmle2}) are given in Section \ref{sec:pf_beta_h_1}. The asymptotic distribution of 
	$\hat{h}_N$ in the $p$-critical case (Theorem \ref{cltintr3_III}) is given Section \ref{sec:pf_beta_h_2}. Finally, the results for $\hat{\beta}_N$ in the $p$-critical case (Theorem \ref{thmmle_III}) are proved in Section \ref{sec:pf_beta_III}.
	
	\subsection{Proofs of Theorems \ref{cltintr3_1}, \ref{cltintr3_2}, \ref{thmmle1} and \ref{thmmle2}} 
	\label{sec:pf_beta_h_1} 
	
	We will only prove the case $(\beta,h)$ is $p$-regular, which includes  Theorems \ref{cltintr3_1} and \ref{thmmle1}. The proofs for the $p$-special case, that is, Theorems \ref{cltintr3_2} and \ref{thmmle2}, follow similarly from part (3) of Theorem \ref{cltun}.

	\subsubsection{Proof of Theorem \ref{cltintr3_1}}

	For any $t \in \mathbb{R}$, we have by \eqref{eqmleh}, Lemma \ref{increasing}, and Theorem \ref{cltun}, together with the fact that pointwise convergence of moment generating functions on $\mathbb{R}$ imply convergence of moments, 
	\begin{align}\label{trclt2}
		\p_{\beta,h,p}  \left(N^{\frac{1}{2}}(\hat{h}_N -h) \leq t\right) &= \p_{\beta,h,p} \left(\hat{h}_N  \leq h + \frac{t}{N^{\frac{1}{2}}}\right)\nonumber\\
		&= \p_{\beta,h,p} \left(u_{N,1}(\beta,\hat{h}_N ,p) \leq u_{N,1}\left(\beta, h + \frac{t}{N^{\frac{1}{2}}},p\right) \right)\nonumber\\
		&= \p_{\beta,h,p} \left(\os \leq \e_{\beta, h+ N^{-\frac{1}{2}} t,p}  (\os) \right)\nonumber\\
		&= \p_{\beta,h,p} \left(N^{\frac{1}{2}}(\os-m_*) \leq \e_{\beta, h+ N^{-\frac{1}{2}} t,p}  (N^{\frac{1}{2}}(\os-m_*)) \right)\nonumber\\
		& \rightarrow \p_{\beta,h,p} \left(N\left(0, -\frac{1}{H''(m_*)}\right)\leq -\frac{t}{H''(m_*)} \right)\nonumber\\
		&= \p_{\beta,h,p} \left(N\left(0, -H''(m_*)\right)\leq t \right).
	\end{align}
	Now, the proof of Theorem \ref{cltintr3_1} follows from \eqref{trclt2}.

	\subsubsection{Proof of Theorem \ref{thmmle1}}
	
	We begin with the case $m_* \ne 0$. By Theorem \ref{cltun}, $\left(\os - m_*\right)^s =  O_{P}(N^{-\frac{s}{2}}) = O_{P}(N^{-1})$, for every $s \geq 2$ under  $\p = \p_{\beta+ \bar{\beta}/\sqrt N, h, p}$. Further, since pointwise convergence of moment generating functions on $\mathbb{R}$ imply convergence of moments, we also have $\e_{\beta+ \bar{\beta}/\sqrt N, h, p}(\os - m_*)^s  = O(N^{-1})$, for every $s \geq 2$. Now,
	\begin{equation}\label{binoexp}
		N^{\frac{1}{2}}(\overline{\sigma}^p_N - m_*^p) =N^{\frac{1}{2}}pm_*^{p-1} (\os-m_*) + N^{\frac{1}{2}}\sum_{s=2}^p \binom{p}{s} m_*^{p-s}(\os-m_*)^s.
	\end{equation}
	It follows from Theorem \ref{cltun} and \eqref{binoexp} that under $\p_{\beta+{N^{-\frac{1}{2}}\bar{\beta}},h,p} $,
	\begin{equation}\label{mpl1}
		N^{\frac{1}{2}}(\overline{\sigma}^p_N - m_*^p) \xrightarrow{D} N\left(-\frac{\bar{\beta}p^2m_*^{2p-2}}{H''(m_*)},-\frac{p^2m_*^{2p-2}}{H''(m_*)}\right) ,
	\end{equation} 
	and 
	\begin{equation}\label{mpl1_N}
		\e\left[N^{\frac{1}{2}}(\overline{\sigma}^p_N - m_*^p)\right] \rightarrow -\frac{\bar{\beta}p^2m_*^{2p-2}}{H''(m_*)}.
	\end{equation} 
	Now, note that for any $t \in \mathbb{R}$, we have by \eqref{eqmle} and the monotonicity of the function $u_{N, p}(\cdot, h, p)$ (Lemma \ref{increasing}), we have
	\begin{align}\label{trclt}
		\p_{\beta,h,p}  \left(N^{\frac{1}{2}}(\hat{\beta}_N-\beta) \leq t\right) &= \p_{\beta,h,p} \left(\hat{\beta}_N \leq \beta + \frac{t}{N^{\frac{1}{2}}}\right)\nonumber\\
		&= \p_{\beta,h,p} \left(u_{N,p}(\hat{\beta}_N,h,p) \leq u_{N,p}\left(\beta + \frac{t}{N^{\frac{1}{2}}}, h,p\right) \right)\nonumber\\
		&= \p_{\beta,h,p} \left(\overline{\sigma}^p_N \leq \e_{\beta + N^{-\frac{1}{2}} t, h,p}  (\overline{\sigma}^p_N) \right)\nonumber\\ 
		&= \p_{\beta,h,p} \left(N^{\frac{1}{2}}(\overline{\sigma}^p_N-m_*^p) \leq \e_{\beta + N^{-\frac{1}{2}} t, h,p}  (N^{\frac{1}{2}}(\overline{\sigma}^p_N-m_*^p)) \right).
	\end{align} 
	Now, weak convergence to a continuous distribution implies uniform convergence of the distribution functions, by \eqref{mpl1}, \eqref{mpl1_N}, and \eqref{trclt}, it follows that under $\p_{\beta,h,p} $, 
	\begin{align*}
		\p_{\beta,h,p}  \left(N^{\frac{1}{2}}(\hat{\beta}_N-\beta) \leq t\right) &\rightarrow \p_{\beta,h,p} \left(N\left(0,-\frac{p^2m_*^{2p-2}}{H''(m_*)}\right) \leq -\frac{tp^2m_*^{2p-2}}{H''(m_*)} \right)  \nonumber \\ 
		& = \p_{\beta,h,p}\left(N\left(0, -\frac{H''(m_*)}{p^2m_*^{2p-2}}\right)\leq t \right) .
	\end{align*}
	This completes the proof of \eqref{eq:bmle_m_1}. 
	
	Next, we consider the case $m_* = 0$. This implies that $\sup_{x \in [-1,1]} H_{\beta,h,p}(x) = 0$. Hence, by part (1) of \cite[Lemma B.1]{cmp}, $h=0$, and then, \eqref{eq:betatilde} implies $\beta \leq \tilde{\beta}_p$. However, the point $(\tilde{\beta}_p,0)$ is $p$-critical, and hence, we must have $\beta < \tilde{\beta}_p$. Now, for every $t \in \mathbb{R}$,we have by \eqref{eqmle} and Lemma \ref{increasing}, 
	\begin{equation}\label{fct1}
		\p_{\beta,0,p}\left(\hat{\beta}_N > t\right) = \p_{\beta,0,p}\left({\left(N^{\frac{1}{2}}\os\right)}^p > N^{\frac{p}{2}}u_{N,p}(t,0,p)\right).
	\end{equation}
	First, fix $t \in (\tilde{\beta}_p,\infty)$ and note that:
	\begin{equation}\label{adnlst1}
		u_{N,p}(t,0,p) = \frac{1}{N} \frac{\partial}{\partial \underline{\beta}}F_N(\underline{\beta},0,p)\Big|_{\underline{\beta} = t}. 
	\end{equation}
	Now, by the mean value theorem and the fact that $F_N(0,0,p) = 0$, we have:
	\begin{equation}\label{adnlstp2}
		F_N(t,0,p) = t\frac{\partial}{\partial \underline{\beta}}F_N(\underline{\beta},0,p)\Big|_{\underline{\beta} = \xi}
	\end{equation}
	for some $\xi \in (0,t)$. By Lemma \ref{increasing}, we have:
	\begin{equation}\label{adnlstp3}
		\frac{\partial}{\partial \underline{\beta}}F_N(\underline{\beta},0,p)\Big|_{\underline{\beta} = \xi} \leq \frac{\partial}{\partial \underline{\beta}}F_N(\underline{\beta},0,p)\Big|_{\underline{\beta} = t}. 
	\end{equation}
	Combining \eqref{adnlst1}, \eqref{adnlstp2} and \eqref{adnlstp3}, we have:
	\begin{equation}\label{adnlstp4}
		u_{N,p}(t,0,p) \geq t^{-1} N^{-1} F_N(t,0,p). 
	\end{equation}
	Now, \eqref{logpartex} in  Lemma \ref{ex} (for odd $p$) and \eqref{eq:multi_max_II} in Lemma \ref{lm:condpart} (for even $p$) implies that\footnote{For two positive sequences $\{a_n\}_{n \geq 1}$ and $\{b_n\}_{n \geq 1}$, $a_n = \Omega(b_n)$, if there exists a positive constant $C$, such that $a_n \geq C b_n$, for all large $n$.} $$N^{-1} F_N(t,0,p) = \Omega(1).$$
	This, together with \eqref{adnlstp4} implies that:
	\begin{equation}\label{udifc}
		u_{N,p}(t,0,p) = \Omega(1).
	\end{equation}
	Since, by Theorem \ref{cltun}, $N^{1/2}\os \xrightarrow{D} N(0,1)$ under $\p_{\beta,0,p}$,   \eqref{fct1} and \eqref{udifc} implies, as $N \rightarrow \infty$,
	\begin{equation}\label{tgeqbet}
		\p_{\beta,0,p}\left(\hat{\beta}_N > t\right) \rightarrow 0.
	\end{equation}
	Next, fix $t \in [0,\tilde{\beta}_p)$. Since we have pointwise convergence of moment generating functions in part (1) of Theorem \ref{cltun}, we get:
	\begin{equation}\label{adl}
		N^{\frac{p}{2}}u_{N,p}(t,0,p) = \e_{t,0,p}[(N^{\frac{1}{2}}\os)^p] \rightarrow \e Z^p. 
	\end{equation}
	Hence by \eqref{fct1},	
	\begin{equation}\label{tgeqbet1}
		\p_{\beta,0,p}\left(\hat{\beta}_N \leq t\right) \rightarrow \gamma_p.
	\end{equation}
	Finally, fix $t\in (-\infty,0)$. If $p$ is odd, the function $\beta \mapsto F_N(\beta,0,p)$ becomes an even function (recall \eqref{ptnepn}), and hence, its partial derivative with respect to $\beta$ becomes an odd function. Consequently, $$u_{N,p}(t,0,p) = -u_{N,p}(-t,0,p). $$ Now, if $t <  -\tilde{\beta}_p$, then $-t \in ( \tilde{\beta}_p,\infty)$, so by \eqref{udifc}, $N^{\frac{p}{2}}u_{N,p}(-t,0,p)$ converges to $\infty$ , i.e. $$\lim_{N\rightarrow\infty} N^{\frac{p}{2}}u_{N,p}(t,0,p) = -\infty. $$ If $t > -\tilde{\beta}_p$, then $-t \in (0,\tilde{\beta}_p)$, and hence, by \eqref{adl} (note that $\mathbb{E} Z^p = 0$ when $p$ is odd)
	$$\lim_{N\rightarrow\infty} N^{\frac{p}{2}}u_{N,p}(t,0,p) = -\lim_{N\rightarrow\infty} N^{\frac{p}{2}}u_{N,p}(-t,0,p) = 0. $$                                
	Hence, we have from \eqref{fct1}, as $N \rightarrow \infty$,
	
	\[   
	\p_{\beta,0,p}\left(\hat{\beta}_N \leq t\right) \rightarrow 
	\begin{cases}
		0 &\quad\text{if}~t < -\tilde{\beta}_p,\\
		\frac{1}{2} &\quad\text{if}~t > -\tilde{\beta}_p\\
	\end{cases}
	\]
	This, combined with \eqref{tgeqbet} and \eqref{tgeqbet1}, shows that $\hat{\beta}_N \xrightarrow{D} \frac{1}{2}\delta_{\tilde{\beta}_p} + \frac{1}{2} \delta_{-\tilde{\beta}_p}$ if $p$ is odd.
	
	Now, assume that $p \geq 4$ is even. Then, for $t\in (-\infty,0)$, 
	\begin{align}\label{myt1}
		\bigg|N^{\frac{p}{2}-1} \frac{\partial}{\partial \underline \beta} F_N(\underline \beta,0,p)\Big|_{\underline \beta = t} - & \mathbb{E}_{0,0,p} \left[N^{\frac{p}{2}}\overline{\sigma}^p_N\right]\bigg|   \nonumber\\ 
		&= N^{\frac{p}{2}-1}\left\{\frac{\partial}{\partial \underline \beta} F_N(\underline \beta,0,p)\Big|_{\underline \beta = 0} - \frac{\partial}{\partial \underline \beta} F_N(\underline \beta,0,p)\Big|_{\underline \beta = t}\right \} \nonumber\\
		&   \leq -tN^{\frac{p}{2}-1} \sup_{\zeta \in [t,0]} \frac{\partial^2}{\partial \underline \beta^2} F_N(\underline \beta, 0, p)\Big|_{\underline \beta=\zeta}  \nonumber\\
		&=-tN^{\frac{p}{2}-1} \sup_{\zeta \in [t,0]} \mathrm{Var}_{\zeta,0,p} \left(N\overline{\sigma}^p_N\right)\nonumber\\ 
		&= -tN^{1-\frac{p}{2}}\sup_{\zeta \in [t,0]} \mathrm{Var}_{\zeta,0,p} \left(N^{\frac{p}{2}}\overline{\sigma}^p_N\right)\nonumber\\
		& \leq -tN^{1-\frac{p}{2}}\sup_{\zeta \in [t,0]} \mathbb{E}_{\zeta,0,p}  (N^{p}\overline{\sigma}^{2p}_N) \nonumber\\
		&= -t N^{1-\frac{p}{2}}\sup_{\zeta \in [t,0]} \mathbb{E}_{0,0,p}  \left[N^{p}\overline{\sigma}^{2p}_N e^{\zeta N\overline{\sigma}^p_N-F_N(\zeta,0,p)}\right].
	\end{align}
	Next, for every $\zeta \in [t,0]$, since the map $\beta \mapsto \frac{\partial}{\partial \beta} F_N(\beta,0,p)$ is increasing,
	$$-F_N(\zeta,0,p) \leq -\zeta \frac{\partial}{\partial \underline \beta} F_N(\underline \beta,0,p)\Big|_{\underline \beta=0} \leq -t N\mathbb{E}_{0,0,p} \overline{\sigma}^p_N = o(1)\implies \sup_{N \geq 1}\sup_{\zeta \in [t,0]} e^{-F_N(\zeta,0,p)} := B < \infty.$$ We thus have from \eqref{myt1},
	\begin{equation*}
		\left|N^{\frac{p}{2}-1} \frac{\partial}{\partial \underline \beta} F_N(\underline \beta,0,p)\Big|_{\underline \beta = t} - \mathbb{E}_{0,0,p} \left[N^{\frac{p}{2}}\overline{\sigma}^p_N\right]\right| \leq -tBN^{1-\frac{p}{2}} \mathbb{E}_{0,0,p}\left[N^p\overline{\sigma}^{2p}_N\right] = o(1).
	\end{equation*}
	Hence, $N^{\frac{p}{2}} u_{N,p}(t,0,p) = N^{\frac{p}{2}-1} \frac{\partial}{\partial \tilde{\beta}} F_N(\tilde{\beta},0,p)\Big|_{\tilde{\beta} = t} \rightarrow \e Z^p$ as $N \rightarrow \infty$. Consequently, as $N \rightarrow \infty$,
	\begin{equation}\label{lstpeven}
		\p_{\beta,0,p}\left(\hat{\beta}_N \leq t\right) \rightarrow \gamma_p.
	\end{equation}
	We conclude from \eqref{tgeqbet}, \eqref{tgeqbet1} and \eqref{lstpeven}, that $\hat{\beta}_N \xrightarrow{D} \gamma_p \delta_{-\infty} + (1-\gamma_p) \delta_{\tilde{\beta}_p}$ if $p$ is even. This completes the proof of \eqref{eq:bmle_m_2}. \qed

	\begin{remark}[Efficiency of the ML estimates at $p$-regular points]\label{remark1} 
		An interesting consequence of the results proved above is that, at the $p$-regular points, the  limiting variance of the ML estimates equals the limiting inverse Fisher information, that is, the ML estimates are asymptotically efficient. To see this, note that the Fisher information of $\beta$ and $h$ (scaled by $N$) in the model \eqref{cwwithmg} are given by 
		$$I_N(\beta) = \frac{1}{N} \e_{\beta,h,p}\left[\left(\frac{\partial}{\partial \beta} \log \p_{\beta,h,p}(\bs)\right)^2\right] = \mathrm{Var}_{\beta,h,p}(N^{\frac{1}{2}} \os^p)$$ 
		and 
		$$I_N(h) = \frac{1}{N} \e_{\beta,h,p}\left[\left(\frac{\partial}{\partial h} \log \p_{\beta,h,p}(\bs)\right)^2\right] = \mathrm{Var}_{\beta,h,p}(N^{\frac{1}{2}} \os),$$ 
		respectively. It follows from the proof of Theorem \ref{cltun}, that for a $p$-regular point $(\beta,h)$, the moment generating of $\sqrt{N}\left(\os- m_*\right)$ converges pointwise to that of the centered Gaussian distribution with variance $-\left[H''(m_*)\right]^{-1}$. Hence, 
		\begin{equation*}
			\lim_{N \rightarrow \infty}I_N(h) =  -\left[H''(m_*)\right]^{-1}~.
		\end{equation*} 
		Also, it follows from \eqref{binoexp} and \eqref{mpl1} and the fact $\e_{\beta,h,p}\left[(\os- m_*)^s\right] = O(N^{-s/2})$, for each $s \geq 1$, that
		\begin{equation*}
			\lim_{N \rightarrow \infty}I_N(\beta) =  -\frac{p^2 m_*^{2p-2}}{H''(m_*)}~.
		\end{equation*} 
		Therefore, by Theorem \ref{cltintr3_1}, at a $p$-regular point $(\beta,h)$, $\hat{h}_N$ is an efficient estimator of $h$, and by Theorem \ref{thmmle1}, if $(\beta,h)$ is a $p$-regular point with $m_*\neq 0$, then $\hat{\beta}_N$ is an efficient estimator of $\beta$.
	\end{remark}

	\subsection{Proof of Theorem \ref{cltintr3_III}}
	\label{sec:pf_beta_h_2} 
	
	Recall the definitions of the sets $A_k~(1\leq k\leq K)$ from the proof of  Lemma \ref{lem:multiple_max}. Now, fixing $t <0$, we have similar to the proof of \eqref{eq:h_1},
	\begin{align*}
		\p_{\beta,h,p}  \left(N^{\frac{1}{2}}(\hat{h}_N -h) \leq t\right)  &= \p_{\beta,h,p} \left(N^{\frac{1}{2}}(\os-m_1) \leq \e_{\beta, h+ N^{-\frac{1}{2}} t,p}  (N^{\frac{1}{2}}(\os-m_1)) \right) \nonumber\\
		& = T_1 + T_2, 
	\end{align*}
	where 
	\begin{align*}
		T_1&= \p_{\beta,h,p} \left(N^{\frac{1}{2}}(\os-m_1) \leq \e_{\beta, h+ N^{-\frac{1}{2}} t,p}  (N^{\frac{1}{2}}(\os-m_1))\Big| \os \in A_1\right) \p_{\beta,h,p} (\os \in A_1) , \\
		T_2 & = \p_{\beta,h,p} \left(N^{\frac{1}{2}}(\os-m_1) \leq \e_{\beta, h+ N^{-\frac{1}{2}} t,p}  (N^{\frac{1}{2}}(\os-m_1))\Big| \os \in A_1^c\right) \p_{\beta,h,p} (\os \in A_1^c). 
	\end{align*} 
	Now, by the law of iterated expectations, we have
	\begin{align}\label{lie1}
		\e_{\beta, h+ N^{-\frac{1}{2}} t,p}  (N^{\frac{1}{2}}(\os-m_1))  & =  S_1 + S_2 , 
	\end{align}
	where 
	\begin{align}\label{lie2}
		S_1:= \e_{\beta, h+ N^{-\frac{1}{2}} t,p}  \left(N^{\frac{1}{2}}(\os-m_1)\Big|\os \in A_1\right)  \p_{\beta, h+ N^{-\frac{1}{2}} t,p} (\os \in A_1) 
	\end{align}
	and
	\begin{align}\label{lie3}
		S_2:= \e_{\beta, h+ N^{-\frac{1}{2}} t,p}  \left(N^{\frac{1}{2}}(\os-m_1)\Big|\os \in A_1^c\right) \p_{\beta, h+ N^{-\frac{1}{2}} t,p} (\os \in A_1^c). 
	\end{align}  
	Note that by \eqref{moment generating functionnonun}, $$ \e_{\beta, h+ N^{-\frac{1}{2}} t,p} \left(N^{\frac{1}{2}}(\os-m_1)\Big|\os \in A_1\right) \rightarrow - \frac{t}{H''(m_1)},$$ 
	as $N \rightarrow \infty$. Also, by Lemma \ref{redunsup}, $\p_{\beta, h+ t/\sqrt N,p} (\os \in A_1^c) \leq C_1 e^{-C_2 N^{\frac{1}{2}}}$ for positive constants $C_1,C_2$ not depending on $N$. Hence, \eqref{lie2} converges to $-t/H''(m_1)$ and \eqref{lie3} converges to $0$. Consequently, \eqref{lie1} converges to $-t/H''(m_1)$. 
	
	Next, under $\p_{\beta, h,p}(~\cdot~|\os \in A_1^c)$, $N^{\frac{1}{2}}(\os - m_1) \xrightarrow{P} \infty,$ by Lemma \ref{lem:multiple_max}. Hence, $T_2 \rightarrow 0$. Also, by Theorem \ref{cltun}, $N^{\frac{1}{2}}(\os - m_1) \xrightarrow{D} N\left(0,-1/H''(m_1)\right)$ under $\p_{\beta,h,p} (~\cdot~|\os \in A_1)$. Hence, $T_1$ converges to
	$$p_1\p(N\left(0,-\frac{1}{H''(m_1)}) \leq -\frac{t}{H''(m_1)}\right) = p_1\p\left(N(0,-H''(m_1)) \leq t\right).$$ Hence,  
	\begin{equation}\label{htl0}
		\p_{\beta,h,p}  \left(N^{\frac{1}{2}}(\hat{h}_N -h) \leq t\right)  \rightarrow p_1\p\left(N(0,-H''(m_1)) \leq t\right)\quad\quad \textrm{for all}\quad t<0.
	\end{equation}
	Next, fix $t >0$, whence we have
	\begin{equation*}
		\p_{\beta,h,p}  \left(N^{\frac{1}{2}}(\hat{h}_N -h) > t\right) = T_3+ T_4,
	\end{equation*}
	where 
	\begin{align*}
		T_3&= \p_{\beta,h,p} \left(N^{\frac{1}{2}}(\os-m_K) > \e_{\beta, h+ N^{-\frac{1}{2}} t,p}  (N^{\frac{1}{2}}(\os-m_K))\Big| \os \in A_K\right) \p_{\beta,h,p} (\os \in A_K), \\
		T_4 & = \p_{\beta,h,p} \left(N^{\frac{1}{2}}(\os-m_K) > \e_{\beta, h+ N^{-\frac{1}{2}} t,p}  (N^{\frac{1}{2}}(\os-m_K))\Big| \os \in A_K^c\right) \p_{\beta,h,p} (\os \in A_K^c).
	\end{align*} 
	By the same arguments as before, it follows that $$\e_{\beta, h+  N^{-\frac{1}{2}} t, p}  (N^{\frac{1}{2}}(\os-m_K)) \rightarrow -\frac{t}{H''(m_K)}.$$ Next, under $\p_{\beta, h,p} (~\cdot~|\os \in A_K^c )$, $N^{\frac{1}{2}}(\os - m_K) \xrightarrow{P} -\infty$ by Lemma \ref{lem:multiple_max}. Hence, $T_4\rightarrow 0$. Also, by Theorem \ref{cltun} (2), $N^{\frac{1}{2}}(\os - m_K) \xrightarrow{D} N(0,-1/H''(m_K))$ under $\p_{\beta,h,p} (~\cdot~|\os \in A_K)$. Hence, $T_3$ converges to
	$$p_K\p\left(N\left(0,-\frac{1}{H''(m_K)}\right) > -\frac{t}{H''(m_K)}\right) = p_K\p\left(N(0,-H''(m_K)) > t\right).$$ Hence,
	\begin{equation}\label{htl1}
		\p_{\beta,h,p}  \left(N^{\frac{1}{2}}(\hat{h}_N -h) > t\right) \rightarrow p_K\p\left(N(0,-H''(m_K)) > t\right)\quad\quad \textrm{for all}\quad t>0.
	\end{equation}
	Combining \eqref{htl0} and \eqref{htl1}, we conclude that for all $p$-critical points $(\beta,h)$, under $\p_{\beta,h,p} $,
	\begin{equation}\label{htl3}
		N^{\frac{1}{2}}(\hat{h}_N -h) \xrightarrow{D} \tfrac{p_1}{2} N^{-}(0,-H''(m_1)) + \tfrac{p_K}{2} N^+(0,-H''(m_K)) + \left(1-\frac{p_1+p_K}{2}\right)\delta_0.	
	\end{equation}
	Theorem \ref{cltintr3_III} follows from \eqref{htl3} on observing that if $p\geq 4$ is even and $(\beta,h) = (\tilde{\beta}_p,0)$, then $K=3$, $m_3 = -m_1$ and $p_1=p_3$, and otherwise, $K=2$.
	
	\subsection{Proof of Theorem \ref{thmmle_III}} 
	\label{sec:pf_beta_III} 
	
	We first deal with the case $p \geq 3$ is odd.
	
	\noindent\textit{Proof of \eqref{eq:bmle_multimax_1}:} In this case, $0$ is not a global maximizer of $H_{\beta,p,h}$. Fixing $t < 0$, we have
	\begin{align*}
		\p_{\beta,h,p}  \left(N^{\frac{1}{2}}(\hat{\beta}_N-\beta) \leq t\right) &= \p_{\beta,h,p} \left(N^{\frac{1}{2}}(\overline{\sigma}^p_N-m_1^p) \leq \e_{\beta+N^{-\frac{1}{2}} t, h ,p}  (N^{\frac{1}{2}}(\overline{\sigma}^p_N-m_1^p)) \right)\nonumber\\&= T_5+T_6,
	\end{align*}
	where
	\begin{align*}
		T_5&= \p_{\beta,h,p} \left(N^{\frac{1}{2}}(\overline{\sigma}^p_N-m_1^p) \leq \e_{\beta+N^{-\frac{1}{2}} t, h ,p}  (N^{\frac{1}{2}}(\overline{\sigma}^p_N-m_1^p))\Big| \os \in A_1\right)\p_{\beta,h,p} (\os \in A_1), \\
		T_6 & = \p_{\beta,h,p} \left(N^{\frac{1}{2}}(\overline{\sigma}^p_N-m_1^p) \leq \e_{\beta+N^{-\frac{1}{2}} t, h ,p}  (N^{\frac{1}{2}}(\overline{\sigma}^p_N-m_1^p))\Big| \os \in A_1^c\right)\p_{\beta,h,p} (\os \in A_1^c). 
	\end{align*} 
	Now, by the law of iterated expectations, we have
	\begin{align}
		\e_{\beta+N^{-\frac{1}{2}} t, h ,p}  (N^{\frac{1}{2}}(\overline{\sigma}^p_N-m_1^p)) = S_3 + S_4, \label{lie11} 
	\end{align} 
	where 
	\begin{align} 
		S_3:=\e_{\beta+N^{-\frac{1}{2}} t, h,p}  \left(N^{\frac{1}{2}}(\overline{\sigma}^p_N-m_1^p)\Big|\os \in A_1\right) \p_{\beta+ N^{-\frac{1}{2}} t, h,p} (\os \in A_1)\label{lie21}
	\end{align} 
	and 
	\begin{align} 
		S_4:=\e_{\beta+N^{-\frac{1}{2}} t, h,p}  \left(N^{\frac{1}{2}}(\overline{\sigma}^p_N-m_1^p)\Big|\os \in A_1^c\right) \p_{\beta+ N^{-\frac{1}{2}} t, h,p} (\os \in A_1^c).\label{lie31}
	\end{align} 
	
	\noindent From Theorem \ref{cltun} and a simple binomial expansion (see \eqref{binoexp}), it follows that
	\begin{equation*}
		\e_{\beta+N^{-\frac{1}{2}} t, h,p}  \left(N^{\frac{1}{2}}(\overline{\sigma}^p_N-m_1^p)\Big|\os \in A_1\right) \rightarrow -\frac{tp^2m_1^{2p-2}}{H''(m_1)},
	\end{equation*}
	and under $\p_{\beta,h,p} (~\cdot~|\os \in A_1)$,
	\begin{equation}\label{eqel2}
		N^{\frac{1}{2}}(\overline{\sigma}^p_N - m_1^p) \xrightarrow{D} N\left(0,-\frac{p^2m_1^{2p-2}}{H''(m_1)}\right). 
	\end{equation}
	
	\noindent By Lemma \ref{redunsup2}, $\p_{\beta+ t/\sqrt N, h,p} (\os \in A_1^c) \leq C_1 e^{-C_2 \sqrt N}$ for positive constants $C_1,C_2$ not depending on $N$. Hence, \eqref{lie21} converges to $-tp^2m_1^{2p-2}/H''(m_1)$ and \eqref{lie31} converges to $0$. Consequently, \eqref{lie11} converges to $-tp^2m_1^{2p-2}/H''(m_1)$.  
	
	Next, under $\p_{\beta, h,p} (~\cdot~|\os \in A_1^c )$, $N^{\frac{1}{2}}(\overline{\sigma}^p_N - m_1^p) \xrightarrow{P} \infty$ by Lemma \ref{lem:multiple_max}. Hence, $T_6 \rightarrow 0$. Then, by \eqref{eqel2}, $T_5$ converges to
	$$p_1\p\left(N\left(0,-\frac{p^2m_1^{2p-2}}{H''(m_1)}\right) \leq -\frac{tp^2m_1^{2p-2}}{H''(m_1)}\right) = p_1\p\left(N\left(0,-\frac{H''(m_1)}{p^2 m_1^{2p-2}}\right) \leq t\right).$$ Hence, 
	\begin{equation}\label{htl03}
		\p_{\beta,h,p}  \left(N^{\frac{1}{2}}(\hat{\beta}_N-\beta) \leq t\right) \rightarrow p_1\p\left(N\left(0,-\frac{H''(m_1)}{p^2 m_1^{2p-2}}\right)\leq t\right)\quad\quad \textrm{for all}\quad t<0.
	\end{equation}
	Next, fix $t >0$, whence we have
	\begin{equation*}
		\p_{\beta,h,p}  \left(N^{\frac{1}{2}}(\hat{\beta}_N-\beta) > t\right) = T_7 + T_8,
	\end{equation*}
	where
	\begin{align*}
		T_7&= \p_{\beta,h,p} \left(N^{\frac{1}{2}}(\overline{\sigma}^p_N-m_2^p) > \e_{\beta+ N^{-\frac{1}{2}} t, h,p}  (N^{\frac{1}{2}}(\overline{\sigma}^p_N-m_2^p))\Big| \os \in A_2\right)\p_{\beta,h,p} (\os \in A_2),\\
		T_8 & = \p_{\beta,h,p} \left(N^{\frac{1}{2}}(\overline{\sigma}^p_N-m_2^p) > \e_{\beta+ N^{-\frac{1}{2}} t, h,p}  (N^{\frac{1}{2}}(\overline{\sigma}^p_N-m_2^p))\Big| \os \in A_2^c\right)\p_{\beta,h,p} (\os \in A_2^c). 
	\end{align*}
	By the same arguments as before, it follows that $$\e_{\beta+ N^{-\frac{1}{2}} t, h,p}  (N^{\frac{1}{2}}(\overline{\sigma}^p_N-m_2^p)) \rightarrow -\frac{tp^2 m_2^{2p-2}}{H''(m_2)}.$$ Next, under $\p_{\beta, h,p} (~\cdot~\big|\os \in A_2^c )$, $N^{\frac{1}{2}}(\overline{\sigma}^p_N - m_2^p) \xrightarrow{P} -\infty$ by Lemma \ref{lem:multiple_max}. Hence, $T_8 \rightarrow 0$. Also, we know that  $N^{\frac{1}{2}}(\overline{\sigma}^p_N - m_2^p) \xrightarrow{D} N\left(0,-p^2m_2^{2p-2}/H''(m_2)\right)$ under $\p_{\beta,h,p} \left(~\cdot~\big|\os \in A_2\right)$. Hence, $T_7$ converges to
	$$p_2\p\left(N\left(0,-\frac{p^2m_2^{2p-2}}{H''(m_2)}\right) > -\frac{tp^2m_2^{2p-2}}{H''(m_2)}\right) = p_2\p\left(N\left(0,-\frac{H''(m_2)}{p^2 m_2^{2p-2}}\right) > t\right).$$ Hence,
	\begin{equation}\label{htl13}
		\p_{\beta,h,p}  \left(N^{\frac{1}{2}}(\hat{\beta}_N-\beta) > t\right) \rightarrow p_2\p\left(N\left(0,-\frac{H''(m_2)}{p^2 m_2^{2p-2}}\right) > t\right), \quad \textrm{for all } t>0.
	\end{equation}
	Combining \eqref{htl03} and \eqref{htl13}, we conclude that if $p \geq 3$ is odd, then for all $p$-critical points $(\beta,h)$, under $\p_{\beta,h,p} $,
	\begin{equation}\label{htl33}
		N^{\frac{1}{2}}(\hat{\beta}_N-\beta) \xrightarrow{D} \frac{p_1}{2} N^{-}\left(0,-\frac{H''(m_1)}{p^2 m_1^{2p-2}}\right) + \frac{p_2}{2} N^+\left(0,-\frac{H''(m_2)}{p^2 m_2^{2p-2}}\right) + \left(1-\frac{p_1+p_2}{2}\right)\delta_0.	
	\end{equation}
	\eqref{eq:bmle_multimax_1} now follows from \eqref{htl33} on observing that $p_2=1-p_1$. \\ 
	
	\medskip
	
	\noindent \textit{Proof of \eqref{eq:bmle_multimax_2}:} In this case, $m_1 = 0$. We can write for any $t < 0$,
	\begin{align}
		& \p_{\beta,h,p}  \left(N^{\frac{1}{2}}(\hat{\beta}_N-\beta) \leq t\right)\nonumber\\& =  \p_{\tilde{\beta}_p,0,p} \left(N^\frac{p}{2}\overline{\sigma}^p_N \leq \e_{\tilde{\beta}_p+N^{-\frac{1}{2}} t, 0 ,p}  (N^\frac{p}{2}\overline{\sigma}^p_N)\Big| \os \in A_1\right)\p_{\tilde{\beta}_p,0,p} (\os \in A_1)\label{m111}\\&+  \p_{\tilde{\beta}_p,0,p} \left(N^\frac{p}{2}\overline{\sigma}^p_N \leq \e_{\tilde{\beta}_p+N^{-\frac{1}{2}} t, 0 ,p}  (N^\frac{p}{2}\overline{\sigma}^p_N)\Big| \os \in A_1^c\right)\p_{\tilde{\beta}_p,0,p} (\os \in A_1^c)\label{m211}.
	\end{align}
	By Theorem \ref{cltun} under both $\p_{\tilde{\beta}_p,0,p} (~\cdot~\big|\os \in A_1)$ and $\p_{\tilde{\beta}_p + t/\sqrt N, 0,p} (~\cdot~\big|\os \in A_1)$, $N^\frac{p}{2} \overline{\sigma}^p_N$ converges to $Z^p$ in distribution and in moments, where $Z \sim N(0,1)$. Consequently, $\e_{\tilde{\beta}_p+ t/\sqrt N, 0 ,p}  (N^\frac{p}{2}\overline{\sigma}^p_N) \rightarrow 0$ by arguments similar to before, since $\p_{\tilde{\beta}_p+  t/\sqrt N, 0 ,p} (\os \in A_1^c)$ decays to $0$ exponentially fast. Hence, \eqref{m111} converges to $p_1/2$. Also, under $\p_{\tilde{\beta}_p,0,p} (~\cdot~\big| \os \in A_1^c )$, $N^\frac{p}{2}\overline{\sigma}^p_N \xrightarrow{P} \infty$ and hence, \eqref{m211} converges to $0$. This shows that for all $t < 0$,
	\begin{equation}\label{consprob}
		\p_{\beta,h,p}  \left(N^{\frac{1}{2}}(\hat{\beta}_N-\beta) \leq t\right) \rightarrow \frac{p_1}{2}.
	\end{equation}
	Of course, \eqref{htl13} still remains valid. \eqref{eq:bmle_multimax_2} now follows from \eqref{htl13} and \eqref{consprob}.
	
	Now, assume that $p \geq 4$ is even. If $h \neq 0$, then $K=2$. Also, $m_1<m_2<0$ if $h <0$ and $0<m_1<m_2$ if $h> 0$. Hence, $m_1^p < m_2^p$ if $h >0$ and $m_1^p > m_2^p$ if $h<0$. We can now use Lemma \ref{redunsup3} to derive \eqref{eq:bmle_multimax_3} and \eqref{eq:bmle_multimax_4}, and the proof is so similar to that for the $p \geq 3$ odd case, that we skip it. We now prove \eqref{eq:bmle_multimax_5} and \eqref{eq:bmle_multimax_6}. \\ 
	
	\noindent\textit{Proof of \eqref{eq:bmle_multimax_5}:}~By Theorem \ref{cltun} and a standard binomial expansion (see \eqref{binoexp}), it follows that for any $\bar{\beta} \in \mathbb{R}$ and $i \in \{1,2\}$, under the conditional measure $\p_{\beta+N^{-\frac{1}{2}}\bar{\beta}, 0,p}\left(~\cdot~\big|\os \in A_i\right)$,  
	\begin{equation}\label{iin12}
		N^{\frac{1}{2}}(\overline{\sigma}^p_N - m_*^p) \xrightarrow{D} N\left(-\frac{\bar{\beta}p^2 m_*^{2p-2}}{H''(m_*)}, -\frac{p^2 m_*^{2p-2}}{H''(m_*)}\right) \textrm{ and }  \e\left(N^{\frac{1}{2}}(\overline{\sigma}^p_N - m_*^p)\right) \rightarrow -\frac{\bar{\beta}p^2m_*^{2p-2}}{H''(m_*)}.
	\end{equation}
	Since $A_1 \sqcup A_2 = [-1,1]$, \eqref{iin12} also holds under the unconditional measure $\p_{\beta+\bar{\beta}/\sqrt N, 0,p}$. The result in \eqref{eq:bmle_multimax_5} now follows easily, since for every $t \in \mathbb{R}$, we have
	\begin{align*}
		\p_{\beta,0,p}\left(N^{\frac{1}{2}}(\hat{\beta}_N - \beta) \leq t\right) &= \p_{\beta,0,p}\left(N^{\frac{1}{2}}(\overline{\sigma}^p_N - m_*^p) \leq \e_{\beta+ N^{-\frac{1}{2}} t, 0,p} \left(N^{\frac{1}{2}}(\overline{\sigma}^p_N - m_*^p) \right)\right)\nonumber\\&\rightarrow \p\left(N\left(0, -\frac{p^2 m_*^{2p-2}}{H''(m_*)}\right) \leq -\frac{tp^2m_*^{2p-2}}{H''(m_*)}\right)\nonumber\\&= \p\left( N\left(0, -\frac{H''(m_*)}{p^2 m_*^{2p-2}}\right) \leq t\right).
	\end{align*}
	\noindent \textit{Proof of \eqref{eq:bmle_multimax_6}:}~Fix $t < 0$. By \eqref{cc3} in Lemma \ref{redunsup3} and Theorem \ref{cltun}, we have
	\begin{equation*}
		\e_{\beta + N^{-\frac{1}{2}}t,0,p} \left(N^\frac{p}{2}\overline{\sigma}^p_N\right) = \e_{\beta + N^{-\frac{1}{2}}t,0,p} \left(N^\frac{p}{2}\overline{\sigma}^p_N\Big| \os \in A_2\right)(1-o(1)) + o(1) = \e Z^p + o(1).
	\end{equation*}
	This, together with the fact that $N^\frac{p}{2} \overline{\sigma}^p_N \xrightarrow{P} \infty$ under $\p_{\beta,0,p}(~\cdot~\big|\os \in A_2^c)$, implies that
	\begin{align}\label{neg}
		\p_{\beta,0,p}\left(N^{\frac{1}{2}}(\hat{\beta}_N - \beta) \leq t\right) &= \p_{\beta,0,p}\left(N^\frac{p}{2} \overline{\sigma}^p_N \leq \e Z^p + o(1)\Big|\os \in A_2\right)\p_{\beta,0,p}(\os \in A_2) + o(1)\nonumber\\&\rightarrow p_2\gamma_p.
	\end{align}
	Next, fix $t > 0$. Note that for any $\bar{\beta} \in \mathbb{R}$ and $i \in \{1,3\}$, we have under $\p_{\beta+\bar{\beta}/\sqrt N, 0,p}\left(~\cdot~\big|\os \in A_i\right)$,  
	\begin{equation}\label{iin121}
		N^{\frac{1}{2}}(\overline{\sigma}^p_N - m_*^p) \xrightarrow{D} N\left(-\frac{\bar{\beta}p^2 m_*^{2p-2}}{H''(m_*)}, -\frac{p^2 m_*^{2p-2}}{H''(m_*)}\right) \textrm{ and } \e\left(N^{\frac{1}{2}}(\overline{\sigma}^p_N - m_*^p)\right) \rightarrow -\frac{\bar{\beta}p^2m_*^{2p-2}}{H''(m_*)}.
	\end{equation} 
	By \eqref{cc2} in Lemma \ref{redunsup3}, $\p_{\beta+ t/\sqrt N, 0, p}(\os \in A_2) \leq Ce^{-DN^{\frac{1}{2}}}$ for some positive constants $C$ and $D$. It thus follows from the second convergence in \eqref{iin121}, that
	\begin{equation}\label{expconv}
		\e_{\beta+N^{-\frac{1}{2}}t, 0, p} \left(N^{\frac{1}{2}}(\overline{\sigma}^p_N - m_*^p)\right) \rightarrow -\frac{tp^2m_*^{2p-2}}{H''(m_*)}.
	\end{equation} 
	Next, observe that $N^{\frac{1}{2}}(\overline{\sigma}^p_N - m_*^p) \xrightarrow{P} - \infty$ under $\p_{\beta, 0,p}\left(~\cdot~\big|\os \in A_2\right)$. Combining this with \eqref{expconv} and using the fact that $p_1=p_3$, we have by the first convergence in \eqref{iin121},
	\begin{align}\label{final}
		&\p_{\beta,0,p}\left(N^{\frac{1}{2}}(\hat{\beta}_N - \beta)> t\right)\nonumber\\ 
		& =  \p_{\beta,0,p} \left(N^{\frac{1}{2}}(\overline{\sigma}^p_N - m_*^p) > -\frac{tp^2m_*^{2p-2}}{H''(m_*)} + o(1)\Bigg|\os \in A_1\right)\p_{\beta,0,p}(\os \in A_1)\nonumber\\
		&+  \p_{\beta,0,p} \left(N^{\frac{1}{2}}(\overline{\sigma}^p_N - m_*^p) > -\frac{tp^2m_*^{2p-2}}{H''(m_*)} + o(1)\Bigg|\os \in A_3\right)\p_{\beta,0,p}(\os \in A_3) + o(1) \nonumber\\
		&\rightarrow 2p_1\p_{\beta,0,p}\left(N\left(0, -\frac{p^2 m_*^{2p-2}}{H''(m_*)}\right) > -\frac{tp^2m_*^{2p-2}}{H''(m_*)}\right)\nonumber\\& =  p_1 \p_{\beta,0,p}\left(N^+\left(0,-\frac{H''(m_*)}{p^2m_*^{2p-2}}\right) > t\right).
	\end{align}
	The result in \eqref{eq:bmle_multimax_6} now follows from \eqref{neg} and \eqref{final}.

	\section{Constructing Confidence Intervals}
	\label{sec:applications}

	In this section, we discuss how the limiting distributions for the ML estimates $\hat{\beta}_N$ and $\hat{h}_N$ obtained above can be used to construct asymptotically valid confidence intervals for the respective parameters. One complication towards using the above results directly is that the limiting distributions $\hat{\beta}_N$ and $\hat{h}_N $ depend on the actual position of the true parameter $(\beta,h) \in \Theta$.  However, if there were an oracle that told us that the unknown parameter $(\beta,h)$ is $p$-regular, then using the results in \eqref{eq:h_1} and \eqref{eq:bmle_m_1} we would be able to easily construct  confidence intervals for the parameters with asymptotic coverage probability $1-\alpha$, as follows:

	\begin{itemize}

		\item {\it Confidence interval for $h$ at the regular points}: Suppose $\beta \geq 0$ is known and $(\beta, h)$ is $p$-regular. Denote the unique maximizer of the function $H$ by $m_*= m_*(\beta, h, p)$. Note that, by  Theorem \ref{cltun}, the average magnetization $\os \pto m_*$, under $\P_{\beta, h, p}$. Therefore, by \eqref{eq:h_1}, 
		\begin{equation}\label{eq:cih1}
			I_{\mathrm{reg}} := \left(\hat{h}_N - \sqrt{\frac{-H''(\os)}{N}} z_{1-\frac{\alpha}{2}},~\hat{h}_N + \sqrt{\frac{-H''(\os)}{N}} z_{1-\frac{\alpha}{2}}\right), 
		\end{equation} 
		is an interval which contains $h$ with asymptotic coverage probability $1-\alpha$, whenever $(\beta, h)$- is $p$-regular.\footnote{Note that $z_\alpha$ is the $\alpha$-th quantile of the standard normal distribution, that is, $\mathbb{P}(N(0, 1) \leq z_\alpha) = \alpha$.} More precisely, $\P_{\beta, h, p}(h \in I_{\mathrm{reg}}) \rightarrow 1-\alpha$, for  $(\beta, h) \in \Theta$ which is regular.

		\item {\it Confidence interval for $\beta$ at the regular points}: Suppose $h \ne 0$ is known and $(\beta, h)$ is $p$-regular. As before, denote the unique maximizer of the function $H$ by $m_*= m_*(\beta, h, p)$. Therefore, by \eqref{eq:bmle_m_1}, 
		\begin{equation}\label{eq:cibeta1}
			J_{\mathrm{reg}}: = \left(\hat{\beta}_N - \frac{\os^{1-p}}{p}\sqrt{\frac{-H''(\os)}{N}} z_{1-\frac{\alpha}{2}},~\hat{\beta}_N + \frac{\os^{1-p}}{p}\sqrt{\frac{-H''(\os)}{N}} z_{1-\frac{\alpha}{2}}\right),
		\end{equation}   
		is an interval which contains $\beta$ with asymptotic coverage probability $1-\alpha$, whenever $(\beta, h)$ is $p$-regular. Note that the assumption $h \ne 0$ is essential, since \ref{eq:bmle_m_2} shows that the ML estimate $\hat{\beta}_N$ may be inconsistent otherwise. 
	\end{itemize}
	Note that the length of $I_{\mathrm{reg}}$ does not depend upon the true value of $\beta$, and 
	the length of $J_{\mathrm{reg}}$ does not depend on the true $h$. 
	
	Now, we discuss how the intervals in \eqref{eq:cih1} and \eqref{eq:cibeta1} can be modified so that they are valid at all parameter points. To this end, let $\overline{\cp}$ denote the closure of the curve $\cp$ with respect to the Euclidean topology on $\Theta$, that is, the union of $\cp$ with the $p$-special point(s) (recall \eqref{eq:beta_h_special}).
	
	\begin{itemize}
		
		\item {\it Confidence interval for $h$ for all points}: Suppose $\beta \geq 0$ is known, and define $S_p(\beta) :=  \{\underline{h} \in \mathbb{R}: (\beta,\underline{h}) \in  \overline{\cp}\}$. Note that $S_p(\beta)$ is either empty, a singleton or a doubleton (recall Figures \ref{figure:ordering1} and  \ref{figure:ordering2}), and is free of the unknown parameter $h$. Then 
		\begin{align}\label{eq:cih2}
			I := I_{\mathrm{reg}} \bigcup S_p(\beta), 
		\end{align}  
		is an interval with the same length (Lebesgue measure) as the regular interval $I_{\mathrm{reg}}$, and contains $h$ with asymptotic probability at least $1-\alpha$, {\it for all $(\beta, h) \in \Theta$}.  This is because the asymptotic coverage probability is guaranteed to be $1-\alpha$ when $(\beta, h)$ is $p$-regular by the discussion following \eqref{eq:cih1} above. On the other hand, 
		if $(\beta, h)$ is not $p$-regular, by definition $h \in S_p(\beta)$, and hence, by \eqref{eq:cih2}, $\P_{\beta,h, p}(I \ni h) = 1$.

		\item {\it Confidence interval for $\beta$ for all points with $h \ne 0$}: 
		Fix $h \neq 0$, and define $T_p(\beta) :=  \{\underline{\beta} \geq 0: (\underline{\beta},h) \in  \overline{\cp}\}$. Note that $T_p(h)$ is either empty or a singleton, and is free of the unknown parameter $\beta$. Then, as above,  
		\begin{align}\label{eq:cibeta2}
			J:= J_{\mathrm{reg}} \bigcup T_p(h), 
		\end{align}   
		is an interval with the same length (Lebesgue measure) as the regular interval $J_{\mathrm{reg}}$, and contains $\beta$ with asymptotic probability at least $1-\alpha$, {\it for all $(\beta, h) \in \Theta$.} 
		
	\end{itemize}
	
	Figure \ref{fig:sigma_interval_beta} shows 100 realizations of the 95\% confidence interval for $\beta$ at the $3$-regular point $(\beta, h)=(0.5, 0.2)$, with $N=10,000$. The green horizontal line represents the true parameter $\beta=0.5$ and the intervals not containing the true parameter are shown in red. \\ 
	
	\begin{figure}[h]
		\centering
		\begin{minipage}[c]{1.0\textwidth}
			\centering
			\includegraphics[width=5.25in,height=2.75in]
			{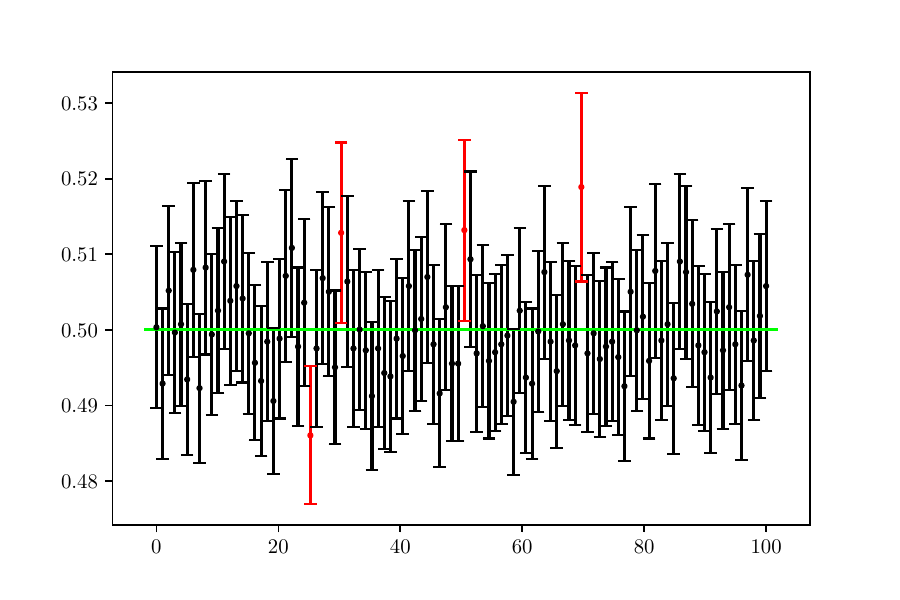}\\
		\end{minipage}
		\caption{\small{100 realizations of the 95\% confidence interval for $\beta$ at the $3$-regular point $(\beta, h)=(0.5, 0.2)$, with $N=10,000$. The intervals not containing the true parameter $\beta=0.5$ are shown in red. }}
		\label{fig:sigma_interval_beta}
	\end{figure}

	\small{
\noindent\textbf{Acknowledgements.} B. B. Bhattacharya was supported by NSF CAREER grant DMS 2046393 and a Sloan Research Fellowship. }

	\normalsize

	\appendix

	\section{Proof of Theorem \ref{cltun}}\label{perturbed_asm}

	
	\subsection{Proof of Theorem \ref{cltun} when $(\beta, h)$ is $p$-regular} 
	\label{sec:pfcltun_I}
	
	Fix a $p$-regular point $(\beta, h) \in \Theta$ and consider a sequence $(\beta_N, h_N) \in \Theta$ (to be specified later) such that $\beta_N \rightarrow \beta$ and $h_N \rightarrow h$. It has been shown in  Lemma \ref{derh44} that the function $H_N(x) := H_{\beta_N,h_N,p}(x)$ will have a unique global maximizer $m_*(N)$, for all large $N$, and $m_*(N)\rightarrow m_*$ as $N \rightarrow \infty$. Choose this maximizer $m_*(N)$ and define, for $\alpha \in (0, 1)$, 
	\begin{align}\label{eq:an_maximizer}
		A_{N,\alpha} := \left(m_*(N)-N^{-\frac{1}{2} + \alpha},m_*(N) +N^{-\frac{1}{2} + \alpha}\right). 
	\end{align}
	The first step in the  proof of Theorem \ref{cltun} when $(\beta, h)$ is $p$-regular,  is to show that under $\bar{\p}$, the average magnetization $\os$ concentrates around $m_*(N)$ at rate $N^{-\frac{1}{2} + \alpha}$, for any $\alpha > 0$. 
	
	\begin{lem}\label{conc}
		Suppose  $(\beta, h) \in \Theta$ is $p$-regular. Then for $\alpha \in \left(0,\frac{1}{6}\right]$ and  $A_{N,\alpha}$ as defined above in \eqref{eq:an_maximizer},\footnote{For any set $A$, $A^c$ denotes the complement of the set $A$.}
		\begin{equation*}
			\bar{\mathbb P}\left(\os \in A_{N,\alpha}^c\right) = \exp\left\{\frac{1}{3}N^{2\alpha} H''(m_*) \right\}O(N^{\frac{3}{2}}).
		\end{equation*}
	\end{lem}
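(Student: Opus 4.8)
The plan is to work directly with the probability mass function of the average magnetization. By the display preceding Theorem~\ref{cltun},
\[
\bar\p(\os = m) = \frac{1}{2^N \bar Z_N}\binom{N}{N(1+m)/2}\, e^{N(\beta_N m^p + h_N m)},
\]
and the standard Stirling estimates for the binomial coefficient furnish the upper bound $2^{-N}\binom{N}{N(1+m)/2} \le e^{-N I(m)}$ for every admissible $m$, together with a matching lower bound $2^{-N}\binom{N}{N(1+m)/2} \ge c N^{-1/2} e^{-N I(m)}$ whenever $m$ lies in a fixed compact subinterval of $(-1,1)$. Writing $H_N := H_{\beta_N,h_N,p}$ as in the surrounding proof, the upper Stirling bound yields $\bar\p(\os = m) \le \bar Z_N^{-1} e^{N H_N(m)}$, and since $\os$ assumes at most $N+1$ values,
\[
\bar\p(\os \in A_{N,\alpha}^c) \le \frac{N+1}{\bar Z_N}\exp\Big(N \sup_{m \in A_{N,\alpha}^c} H_N(m)\Big).
\]
It thus suffices to establish (a) a lower bound $\bar Z_N \ge c' N^{-1/2} e^{N H_N(m_*(N))}$, and (b) an upper bound $\sup_{m \in A_{N,\alpha}^c} H_N(m) \le H_N(m_*(N)) + \tfrac13 H''(m_*) N^{-1+2\alpha}$ valid for all large $N$, where $m_*(N)$ is the maximizer of $H_N$ provided by Lemma~\ref{derh44}.

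For (a), retain only the single term in $\bar Z_N = \sum_m 2^{-N}\binom{N}{N(1+m)/2} e^{N(\beta_N m^p + h_N m)}$ indexed by the grid point $m'(N)$ nearest to $m_*(N)$. By Lemma~\ref{derh44}, $m_*(N) \to m_* \in (-1,1)$, so $m'(N)$ eventually lies in a fixed compact subinterval of $(-1,1)$ and the Stirling lower bound applies, giving $\bar Z_N \ge c N^{-1/2} e^{N H_N(m'(N))}$. Moreover, for large $N$ the maximizer $m_*(N)$ is interior, hence $H_N'(m_*(N)) = 0$, and since $|m'(N) - m_*(N)| \le 1/N$, a second-order Taylor expansion gives $H_N(m'(N)) \ge H_N(m_*(N)) - O(N^{-2})$, which yields the claimed lower bound on $\bar Z_N$.

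For (b), fix $\delta > 0$ small enough that $H'' < \tfrac23 H''(m_*)$ on $(m_* - 2\delta, m_* + 2\delta)$, possible since $H''$ is continuous and $H''(m_*) < 0$. Because $\beta_N \to \beta$ and $h_N \to h$, the functions $H_N$ and $H_N''$ converge to $H$ and $H''$ uniformly on compact subsets of $(-1,1)$, and $m_*(N) \to m_*$; hence for all large $N$ we have $H_N'' < \tfrac23 H''(m_*)$ throughout $(m_*(N) - \delta, m_*(N) + \delta)$. For $m$ in this neighborhood with $|m - m_*(N)| \ge N^{-1/2+\alpha}$, the exact second-order Taylor expansion with Lagrange remainder about the critical point $m_*(N)$ gives $H_N(m) = H_N(m_*(N)) + \tfrac12 H_N''(\xi)(m - m_*(N))^2 \le H_N(m_*(N)) + \tfrac13 H''(m_*) N^{-1+2\alpha}$. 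For $m$ outside $(m_* - \delta, m_* + \delta)$, uniqueness of the global maximizer of $H$ forces $\sup H \le H(m_*) - c_\delta$ on that region for some $c_\delta > 0$, and uniform convergence together with $H_N(m_*(N)) \to H(m_*)$ upgrades this to $H_N(m) \le H_N(m_*(N)) - c_\delta/2$ for all large $N$; since $\tfrac13 H''(m_*) N^{-1+2\alpha} \to 0$, this case is dominated. Taking the maximum over the two cases proves (b).

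Substituting (a) and (b) into the displayed inequality gives $\bar\p(\os \in A_{N,\alpha}^c) \le C N^{3/2} \exp\!\big(\tfrac13 H''(m_*) N^{2\alpha}\big)$, which is the asserted estimate. The one place requiring care is the constant $\tfrac13$ in the exponent: the Taylor argument naturally produces $\tfrac12 H_N''(m_*(N))$, and one passes to $\tfrac13 H''(m_*)$ by exploiting the strict inequality $\tfrac12 H''(m_*) < \tfrac13 H''(m_*)$ (both sides negative) and absorbing the resulting slack $o(N^{-1+2\alpha})$, the Lagrange remainder, and the polynomial loss from the $\bar Z_N$ lower bound into the $O(N^{3/2})$ prefactor. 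The restriction $\alpha \le \tfrac16$ leaves comfortably more room than needed, since the correction $N^{-1+2\alpha}$ remains $o(1)$. Thus the main difficulty is not conceptual but lies in the bookkeeping of these polynomial-order errors and in the passage from the perturbed curvature $H_N''$ to the fixed curvature $H''(m_*)$ appearing in the statement.
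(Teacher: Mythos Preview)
Your argument is correct and follows essentially the paper's route: upper and lower Stirling bounds on the binomial coefficient reduce the tail probability to $\exp\{N(\sup_{A_{N,\alpha}^c} H_N - H_N(m_*(N)))\}\cdot O(N^{3/2})$, after which a Taylor expansion of $H_N$ at its critical point $m_*(N)$ controls the exponent. The only difference is that the paper invokes Lemma~\ref{mest} to place the supremum at the boundary points $m_*(N)\pm N^{-1/2+\alpha}$ and then expands to third order (this is where the restriction $\alpha\le 1/6$ is actually used, to make $N\cdot O(N^{-3/2+3\alpha})=O(1)$), whereas your near/far split with the exact second-order Lagrange remainder is a slightly more self-contained substitute that does not need the third-order term at all.

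One bookkeeping point: your two cases are centered at different points---the near region at $m_*(N)$, the far region at $m_*$---so as written they need not exhaust $A_{N,\alpha}^c$; a point with $|m-m_*(N)|\ge\delta$ but $|m-m_*|<\delta$ falls through. This is harmless, since any such $m$ lies in $(m_*-2\delta,m_*+2\delta)$ where your curvature bound $H_N''<\tfrac23 H''(m_*)$ already holds and the Lagrange-remainder argument applies verbatim; just take the near region to be $(m_*-2\delta,m_*+2\delta)\cap A_{N,\alpha}^c$ (or, equivalently, define the far region as the complement of the $m_*(N)$-centered ball) to make the split clean.
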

	
	\begin{proof} 	Recalling \eqref{cwss} note that the average magnetization $\os$ has the probability mass function, 
	\begin{align*}
		\P_{\beta,h,p}(\overline{\sigma}_N=m)= \frac{1}{2^{N}Z_N(\beta,h,p)} \binom{N}{\frac{N(1+m)}{2}}e^{N(\beta m^p + hm)} , 	\end{align*} 
for $$m \in \mathcal{M}_N:=\left\{-1,-1+\dfrac{2}{N},\ldots, 1-\dfrac{2}{N},1\right\}.$$	It  follows from \cite{talagrand}, Equation (5.4), that for any $m \in \mathcal{M}_N$, the cardinality of the set
		$$A_m := \left\{\bs \in \sa_N: \os = m\right\}$$ can be bounded by
		\begin{equation}\label{boundtal}
			\frac{2^N}{LN^{\frac{1}{2}}} \exp\left\{-N I(m) \right\}\leq |A_m| \leq 2^N\exp\left\{-N I(m) \right\}
		\end{equation}
		for some universal constant $L$ (recall that $I(\cdot)$ is the binary entropy function). Hence, we have from \eqref{boundtal},
		\begin{align}\label{sb}
			\bar{\p}(\os \in A_{N,\alpha}^c) & =  \frac{\sum_{m \in \mathcal{M}_N \bigcap A_{N,\alpha}^c} |A_m|\exp\left\{N(\beta_N m^p + h_Nm) \right\}}{\sum_{m \in \mathcal{M}_N} |A_m|\exp\left\{N(\beta_N m^p + h_Nm) \right\}}\nonumber\\& \leq  \frac{LN^{\frac{1}{2}}(N+1) \sup_{x \in A_{N,\alpha}^c}e^{NH_N(x)}}{\sup_{x\in [-1,1]}e^{NH_N(x)} }\nonumber\\& =  \exp\left\{N\left(\sup_{x\in A_{N,\alpha}^c} H_N(x) - H_N\left(m_*(N)\right)\right)\right\} O(N^{\frac{3}{2}}).
		\end{align}
		By  \cite[Lemma B.11]{cmp}, we know that for all large $N$, $\sup_{x\in A_{N,\alpha}^c} H_N(x)$ is either $H_N(m_*(N)-N^{-\frac{1}{2} + \alpha})$ or $H_N(m_*(N)+N^{-\frac{1}{2} + \alpha})$. Since $H_N'\left(m_*(N)\right) = 0$ and the functions $H_N^{(3)}$ are uniformly bounded on any closed interval contained in $(-1,1)$, Taylor's theorem gives us:
		\begin{align}
			H_N\left(m_*(N)\pm N^{-\frac{1}{2} + \alpha}\right) - H_N(m_*(N)) & =  \frac{1}{2}N^{-1+2\alpha} H_N''(m_*(N)) + O\left(N^{-\frac{3}{2} + 3\alpha}\right)\label{taylorH1}\\& \leq  \frac{1}{3}N^{-1+2\alpha} H''(m_*) +  O\left(N^{-\frac{3}{2} + 3\alpha}\right)\label{taylorH2}.
		\end{align}
		Note that \eqref{taylorH2} follows from \eqref{taylorH1} since $H_N''(m_*(N)) \rightarrow H''(m_*) <0$.  The proof of Lemma \ref{conc} is now complete, in view of \eqref{sb}. 
	\end{proof}

	Lemma \ref{conc} shows that almost all contribution to $\bar{Z}_N$ comes from configurations whose average lies in a vanishing neighborhood of the maximizer $m_*(N)$ of $H_N$. This enables us to accurately approximate  the partition function $\bar{Z}_N$. This involves a Riemann approximation of the sum of the mass function $\p_{\beta_N,h_N,p}(\bs)$ over all $\bm \sigma$ whose mean lies in a vanishing neighborhood of $m_*$, followed by a further saddle-point approximation of the resulting integral. 
	
	\begin{lem}\label{ex} Suppose  $(\beta, h) \in \Theta$ is $p$-regular. Then for $\alpha > 0$ and $N$ large enough, the partition function can be expanded as,
		\begin{equation}\label{partex}
			\bar{Z}_N =  \frac{e^{NH_N(m_*(N))}}{\sqrt{(m_*(N)^2-1)H_N''(m_*(N))}}\left(1+O\left(N^{-\frac{1}{2} +\alpha}\right)\right),
		\end{equation}
		where $m_*(N)$ is the unique maximizer of the function $H_N$. Moreover, for $N$ large enough, the log-partition function can be expanded as, 
		\begin{equation}\label{logpartex}
			\bar{F}_N =  N H_N(m_*(N)) - \tfrac{1}{2}\log \left[(m_*(N)^2-1)H_N''(m_*(N))\right] + O\left(N^{-\frac{1}{2} +\alpha}\right).
		\end{equation}
	\end{lem}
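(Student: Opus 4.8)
The plan is to combine the concentration estimate from Lemma~\ref{conc} with a Laplace-type (saddle-point) analysis of the restricted sum. First I would use Lemma~\ref{conc} to write
\[
\bar{Z}_N = \sum_{m \in \mathcal{M}_N \cap A_{N,\alpha}} |A_m| e^{N(\beta_N m^p + h_N m)} \left(1 + \text{error}\right),
\]
where the multiplicative error is controlled by $\bar{\p}(\os \in A_{N,\alpha}^c)$, which is exponentially small (of order $\exp\{\tfrac13 N^{2\alpha} H''(m_*)\}$ times a polynomial). Choosing $\alpha$ small (say in $(0,\tfrac16]$) keeps this negligible relative to the main term. Then, using the two-sided bound \eqref{boundtal} on $|A_m|$ to replace $|A_m|$ by $2^N e^{-NI(m)}$ up to a factor $1+O(N^{-1/2})$ — or, more carefully, using the sharper Stirling-type asymptotics for the binomial coefficient $\binom{N}{N(1+m)/2}$ so that the prefactor $\sqrt{\tfrac{2}{\pi N (1-m^2)}}$ comes out correctly — I would rewrite the restricted sum as $\sum_{m} c_N(m) e^{N H_N(m)}$ over the lattice $\mathcal{M}_N$ with spacing $2/N$, where $c_N(m) \asymp \sqrt{1/(N(1-m^2))}$.

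Next I would pass from the lattice sum to an integral: the function $m \mapsto e^{NH_N(m)}$ varies on scale $N^{-1/2}$ near $m_*(N)$, so the Riemann-sum error over the lattice of spacing $2/N$ is a relative $O(N^{-1/2+\alpha})$ correction (this is exactly the content of the bare-hands Riemann approximation referenced in Appendix~\ref{approx}; I would invoke that lemma rather than reprove it). This yields
\[
\bar{Z}_N = \left(1 + O(N^{-1/2+\alpha})\right) \frac{N}{2} \int_{A_{N,\alpha}} \sqrt{\tfrac{2}{\pi N(1-x^2)}}\, e^{N H_N(x)}\, \mathrm{d}x.
\]
Then Laplace's method around the maximizer $m_*(N)$: since $H_N'(m_*(N)) = 0$, $H_N''(m_*(N)) \to H''(m_*) < 0$, and $H_N^{(3)}$ is uniformly bounded on closed subintervals of $(-1,1)$, substituting $x = m_*(N) + y/\sqrt{N}$ and Taylor-expanding $H_N$ gives a Gaussian integral $\int e^{\frac12 H_N''(m_*(N)) y^2}\,\mathrm{d}y = \sqrt{2\pi / (-H_N''(m_*(N)))}$, with the cubic and higher terms contributing a relative $O(N^{-1/2+\alpha})$; the prefactor $\sqrt{2/(\pi N (1-m_*(N)^2))}$ is slowly varying and can be pulled out at $x = m_*(N)$ with negligible error. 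Collecting the $\sqrt{N}$ factors — the $N/2$ from the lattice spacing, the $\sqrt{1/N}$ from the binomial prefactor, and the $\sqrt{1/N}$ from the Gaussian width — they cancel, leaving
\[
\bar{Z}_N = \frac{e^{N H_N(m_*(N))}}{\sqrt{(m_*(N)^2 - 1)H_N''(m_*(N))}}\left(1 + O(N^{-1/2+\alpha})\right),
\]
noting $(m_*(N)^2-1)H_N''(m_*(N)) > 0$ since $H_N''(m_*(N)) < 0$ and $|m_*(N)| < 1$. The log-partition expansion \eqref{logpartex} then follows by taking logarithms and using $\log(1+O(N^{-1/2+\alpha})) = O(N^{-1/2+\alpha})$.

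The main obstacle is bookkeeping the error terms uniformly: one needs the Stirling approximation for $\binom{N}{N(1+m)/2}$ to hold uniformly over $m$ in the vanishing window $A_{N,\alpha}$ (so that $m$ stays bounded away from $\pm 1$, which it does since $m_*(N) \to m_* \in (-1,1)$), and one needs the Riemann-sum-to-integral step and the Laplace step to produce compatible relative errors of size $O(N^{-1/2+\alpha})$ rather than something larger. A subtlety worth isolating is that $m_*(N)$, $H_N''(m_*(N))$, and $H_N^{(3)}$ near $m_*(N)$ must be controlled uniformly in $N$; this is where Lemma~\ref{derh44} (convergence of the maximizer) and the uniform boundedness of third derivatives on compact subintervals of $(-1,1)$ are essential, and I would state those dependencies explicitly. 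Everything else is routine Laplace asymptotics.
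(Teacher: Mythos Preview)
Your proposal is correct and follows essentially the same route as the paper: restrict to $A_{N,\alpha}$ via Lemma~\ref{conc}, replace the binomial coefficient by its Stirling asymptotic (Lemma~\ref{stir}), convert the lattice sum to an integral via the Riemann-approximation lemma in Appendix~\ref{approx} (with the derivative bound on $\zeta$ from Lemma~\ref{imp1}), and then apply the Laplace-type approximation (Lemma~\ref{Laplace}) around $m_*(N)$. You also correctly flag the uniformity requirements on $m_*(N)$, $H_N''(m_*(N))$, and $H_N^{(3)}$, which the paper handles via Lemma~\ref{derh44} and the uniform bounds built into Lemma~\ref{Laplace}.
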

	
	\begin{proof}[Proof of Lemma \ref{ex}]
		Without loss of generality, let $\alpha \in \left(0,\frac{1}{6}\right]$ and note that
		\begin{equation}\label{fststep}
			\bar{\mathbb P}(\os \in A_{N,\alpha}) = \bar{Z}_N^{-1}\sum_{m \in \mathcal{M}_N\bigcap A_{N,\alpha}} \binom{N}{N(1+m)/2} \exp\left\{N(\beta_N m^p + h_Nm-\log 2) \right\}.
		\end{equation}
		By Lemma \ref{conc}, $\bar{\mathbb P}(\os \in A_{N,\alpha}) = 1-O\left(e^{-N^\alpha}\right)$ and hence \eqref{fststep} gives us
		\begin{align}\label{secstep}
			\bar{Z}_N & =  \left(1+ O\left(e^{-N^\alpha}\right) \right)\sum_{m \in \mathcal{M}_N\bigcap A_{N,\alpha}} \binom{N}{N(1+m)/2} \exp\left\{N(\beta_N m^p + h_Nm-\log 2) \right\}\nonumber\\& =  \left(1+ O\left(e^{-N^\alpha}\right) \right) \sum_{m \in \mathcal{M}_N\bigcap A_{N,\alpha}} \zeta(m)
		\end{align}
		where $\zeta:[-1,1] \rightarrow \mathbb{R}$ is defined as
		\begin{equation}\label{xidef}
			\zeta(x) := \binom{N}{N(1+x)/2} \exp\left\{N(\beta_N x^p + h_Nx - \log 2)\right\},
		\end{equation}
		where $\binom{N}{N(1+x)/2}$ is interpreted as a continuous binomial coefficient.\footnote{For real numbers $x\geq y>0$, the continuous binomial coefficient $x$ choose $y$ is defined as $\binom{x}{y} := \frac{\Gamma(x+1)}{\Gamma(y+1)\Gamma(x-y+1)}$.} In the next lemma we bound the derivative of the function $\zeta$ in a neighborhood of the point $m_*(N)$.

	\begin{lem}\label{imp1} For any sequence $x \in (-1,1)$  bounded away from both $1$ and $-1$, 
		\begin{align}\label{eq:imp11} 
		\zeta(x) = \sqrt{\frac{2}{\pi N (1-x^2)}} e^{NH_N(x)}\left(1+O(N^{-1})\right) . 
		\end{align} 
Moreover, for every $\alpha \geq 0$ and $p$-regular point $(\beta,h)$, we have the following bound:
		\begin{align}\label{eq:imp12} 
		\sup_{x\in A_{N,\alpha}} |\zeta'(x)| = \zeta(m_*(N))O\left(N^{\frac{1}{2}+\alpha}\right), 
		\end{align}
		where $m_*(N)$ is the global maximizer of $H_N$ and $A_{N,\alpha} := \left(m_*(N)-N^{-\frac{1}{2} + \alpha},m_*(N) +N^{-\frac{1}{2} + \alpha}\right)$. 
	\end{lem}

The proof of this Lemma \ref{imp1} is given in Section \ref{sec:lempf}. We use this lemma and to approximate the sum in \eqref{secstep} by an integral, using \cite[Lemma A.2]{cmp}. Note that Lemma A.2 in \cite{cmp} can be applied with $n = \Theta(N^{\frac{1}{2}+\alpha})$ to obtain (using Lemma \ref{imp1}),
		\begin{align}\label{rax}
			\left|\int_{A_{N,\alpha}} \zeta(x)\mathrm d x - \frac{2}{N} \sum_{m \in \mathcal{M}_N\bigcap A_{N,\alpha}} \zeta(m)\right| &\leq \Theta(N^{-\frac{1}{2}+\alpha})N^{-1}\sup_{x\in A_{N,\alpha}}|\zeta'(x)|\nonumber\\
			&= O\left(N^{-\frac{1}{2}+\alpha}\cdot N^{-1}\cdot N^{\frac{1}{2}+\alpha}\right)\zeta(m_*(N))\nonumber\\
			&= O\left(N^{-1 + 2\alpha}\right)\zeta(m_*(N)).
		\end{align}

		It now follows from \eqref{rax},  Lemma A.3 and Lemma A.5 in \cite{cmp},  and Lemma \ref{imp1}, that
		\begin{align}\label{fin1}
			&\sum_{m \in \mathcal{M}_N\bigcap A_{N,\alpha}} \zeta(m)\nonumber\\ 
			&= \frac{N}{2} \int_{A_{N,\alpha}} \zeta(x)\mathrm d x + O(N^{2\alpha}) \zeta(m_*(N))\nonumber\\
			&= \frac{N^{\frac{1}{2}}}{2}\left(1+O(N^{-1})\right)\int_{A_{N,\alpha}} e^{NH_N(x)}\sqrt{\frac{2}{\pi (1-x^2)}}\mathrm d x + O(N^{2\alpha}) \zeta(m_*(N))\nonumber\\
			&= \frac{N^{\frac{1}{2}}}{2} \sqrt{\frac{2\pi}{N|H_N''(m_*(N))|}}\sqrt{\frac{2}{\pi(1-m_*(N)^2)}} e^{N H_N(m_*(N))}\left(1+O\left(N^{-\frac{1}{2}+3\alpha}\right)\right) + O(N^{2\alpha}) \zeta(m_*(N))\nonumber\\
			&= \frac{e^{N H_N(m_*(N))}}{\sqrt{(m_*(N)^2-1)H_N''(m_*(N))}}\left(1+O\left(N^{-\frac{1}{2}+3\alpha}\right)\right)\nonumber\\ 
			&+ \sqrt{\frac{2}{\pi N (1-m_*(N)^2)}} e^{NH_N(m_*(N))}\left(1+O(N^{-1})\right)O(N^{2\alpha})\nonumber\\ 
			&= \frac{e^{N H_N(m_*(N))}}{\sqrt{(m_*(N)^2-1)H_N''(m_*(N))}}\left(1+O\left(N^{- \frac{1}{2}+3\alpha}\right)\right).
		\end{align}
		Combining \eqref{secstep} and \eqref{fin1}, we have:
		\begin{align}\label{fin2}
			\bar{Z}_N & =  \left(1+ O\left(e^{-N^\alpha}\right) \right)\left(1+O\left(N^{- \frac{1}{2}+3\alpha}\right)\right)\frac{e^{N H_N(m_*(N))}}{\sqrt{(m_*(N)^2-1)H_N''(m_*(N))}}\nonumber\\& =  \left(1+O\left(N^{- \frac{1}{2}+3\alpha}\right)\right)\frac{e^{N H_N(m_*(N))}}{\sqrt{(m_*(N)^2-1)H_N''(m_*(N))}}.
		\end{align}
		This completes the proof of \eqref{partex}. If we take logarithm on all sides in \eqref{fin2} and use the fact that $\log\left(1+ O(a_n)\right) = O(a_n)$ for any sequence $a_n = o(1)$, then we get \eqref{logpartex}, completing the proof. 
	\end{proof}

\subsubsection{Completing the Proof of \eqref{eq:cltun_I}} 
\label{sec:regularpf} 
We now have all the necessary ingredients in order to derive the CLT for $\os$ when $(\beta, h)$ is $p$-regular. Throughout this subsection, we take 
	$$\beta_N = \beta+N^{-\frac{1}{2}}\bar{\beta} \quad \text{and} \quad  h_N = h + N^{-\frac{1}{2}}\bar{h},$$ for some fixed $\beta\geq 0$ and $\bar{\beta}, h, \bar{h} \in \mathbb{R}$.  Now, recall that $H_N := H_{\beta_N,h_N,p}$ and $m_*=m_*(\beta,h,p)$ is the unique maximizer of $H$. To complete the proof we will show that the moment generating function of $N^{\frac{1}{2}}\left(\os - m_*\right)$ under $\p_{\beta_N,h_N,p}$ converges pointwise to the moment generating function of the Gaussian distribution with mean $-\bar{h}/H''(m_*)$ and variance $-1/H''(m_*)$. Towards this, fix $t \in \mathbb{R}$ and note that the moment generating function of $N^{\frac{1}{2}}\left(\os - m_*\right)$ at $t$ can be expressed as 
	\begin{equation}\label{cltst1}
		\mathbb{E}_{\beta_N,h_N,p}e^{tN^{\frac{1}{2}}\left(\os - m_*\right)} = e^{-tN^{\frac{1}{2}}m_*}\frac{Z_N\left(\beta_N,h_N+N^{-\frac{1}{2}}t,p\right)}{Z_N(\beta_N,h_N,p)}.
	\end{equation}
	Using Lemma \ref{ex} and the fact that $m_*(N) \rightarrow m_*$, the right side of \eqref{cltst1} simplifies to 
	\begin{equation}\label{cltst2}
		(1+o(1)) e^{-tN^{\frac{1}{2}}m_* +  N\left\{H_{\beta_N, h_N+N^{-\frac{1}{2}}t,p} \left(m_*\left(\beta_N, h_N+N^{-\frac{1}{2}}t,p\right) \right) - H_{\beta_N, h_N,p} \left(m_*\left(\beta_N, h_N,p\right) \right) \right\} } .
	\end{equation}
	Now, using \cite[Lemma B.5]{cmp} and a simple Taylor expansion gives us
	\begin{align}\label{cltst3}
		m_*\left(\beta_N, h_N+N^{-\frac{1}{2}}t,p\right) - m_*\left(\beta_N, h_N,p\right) &= N^{-\frac{1}{2}}t~ \frac{\partial}{\partial{\underline{h}}} m_*(\beta_N,\underline{h},p)\Big|_{\underline{h}=h_N} + O(N^{-1})\nonumber\\
		&= -\frac{t}{N^{\frac{1}{2}} H_N''(m_*(\beta_N,h_N,p))} + O(N^{-1}). 
	\end{align}
	Using \eqref{cltst3} and a further Taylor expansion, we have
	\begin{align}\label{cltst4}
		&N\left\{H_{\beta_N, h_N,p} \left(m_*\left(\beta_N, h_N+N^{-\frac{1}{2}}t,p\right) \right) - H_{\beta_N, h_N,p} \left(m_*\left(\beta_N, h_N,p\right) \right) \right\}\nonumber\\
		&\quad \quad \quad \quad = \frac{N}{2}\left\{m_*\left(\beta_N, h_N+N^{-\frac{1}{2}}t,p\right) - m_*\left(\beta_N, h_N,p\right)\right\}^2 H_N''\left(m_*\left(\beta_N, h_N,p\right)\right) + o(1)\nonumber\\ 
		&\quad \quad \quad \quad = \frac{t^2}{2H_N''\left(m_*\left(\beta_N, h_N,p\right)\right)} + o(1) \nonumber\\ 
		&\quad \quad \quad \quad = \frac{t^2}{2H''(m_*)} + o(1).
	\end{align}
	Next, by \cite[Lemma B.5]{cmp} and a Taylor expansion,
	\begin{align}\label{cltst5}
		tN^{\frac{1}{2}} m_*\left(\beta_N, h_N+N^{-\frac{1}{2}}t,p\right) &= tN^{\frac{1}{2}} m_*(\beta_N,h,p) + t(t+\bar{h}) \frac{\partial}{\partial{\underline{h}}} m_*(\beta_N,\underline{h},p)\Big|_{\underline{h} = h} + o(1)\nonumber\\
		&= tN^{\frac{1}{2}} m_* + t\bar{\beta}\frac{\partial}{\partial \underline{\beta}} m_*(\underline{\beta},h,p)\Big|_{\underline{\beta} = \beta}  - \frac{t(t+\bar{h})}{H''(m_*)} + o(1)\nonumber\\ 
		&= tN^{\frac{1}{2}}m_* -\frac{t\bar{\beta}pm_*^{p-1}}{H''(m_*)} -\frac{t(t+\bar{h})}{H''(m_*)} + o(1).
	\end{align}
	Adding \eqref{cltst4} and \eqref{cltst5}, we have:
	\begin{align}\label{cltst6}
		& N\left\{H_{\beta_N, h_N+N^{-\frac{1}{2}}t,p} \left(m_*\left(\beta_N, h_N+N^{-\frac{1}{2}}t,p\right) \right) - H_{\beta_N, h_N,p} \left(m_*\left(\beta_N, h_N,p\right) \right) \right\}\nonumber\\ 
		& \quad \quad \quad \quad = tN^{\frac{1}{2}}m_* -\frac{t(\bar{h}+\bar{\beta}pm_*^{p-1})}{H''(m_*)} -\frac{t^2}{2H''(m_*)} + o(1).
	\end{align}
	Using \eqref{cltst6}, the expression in \eqref{cltst2} becomes
	\begin{equation}\label{cltst7}
		\exp\left\{ -\frac{t(\bar{h}+\bar{\beta}pm_*^{p-1})}{H''(m_*)} -\frac{t^2}{2H''(m_*)}\right\} + o(1).
	\end{equation}
	The constant in expression \eqref{cltst7} is easily recognizable as the moment generating function of $N(-\frac{\bar{h}+\bar{\beta}pm_*^{p-1}}{H''(m_*)},~-\frac{1}{H''(m_*)})$ evaluated at $t$. This completes the proof of Theorem \ref{cltun}. \qed

\subsubsection{Proof of Lemma \ref{imp1}} 
\label{sec:lempf}

The result in \eqref{eq:imp11} follows from \cite[Lemma A.5]{cmp}. Hence, it suffices to prove \eqref{eq:imp12}. We begin with the following lemma: 
		\begin{lem}\label{xiprime}
			For any sequence $x \in (-1,1)$ that is bounded away from both $1$ and $-1$, we have
			$$\zeta'(x) =\zeta(x)\left(NH_N'(x) + \frac{x}{1-x^2} + O(N^{-1})\right).$$
		\end{lem}
		
		\begin{proof}
			By Lemma A.1 and Equation (A.1) in \cite{cmp}, we have	
			\begin{align}\label{ff1}
				& \dfrac{\mathrm d}{\mathrm d x} \binom{N}{N(1+x)/2}\nonumber\\ & = \frac{N}{2}\binom{N}{N(1+x)/2}\left[\psi\left(1-\frac{Nx}{2}+\frac{N}{2}\right)-\psi\left(1+\frac{Nx}{2}+\frac{N}{2}\right)\right]\nonumber\\
				& = \frac{N}{2}\binom{N}{N(1+x)/2} \left(\log\left(\frac{ N}{2}(1-x)\right)-\log\left(\frac{N}{2}(1+x)\right)+\frac{1}{N(1-x)}-\frac{1}{N(1+x)} + O(N^{-2})\right)\nonumber\\
				& = \binom{N}{N(1+x)/2}\left[-N \tanh^{-1}(x)+\frac{x}{1-x^2}+O(N^{-1})\right].
			\end{align} 
			We thus have by the product rule of differential calculus and \eqref{ff1},
			\begin{align*}
				\zeta'(x) & =  \zeta(x)(N\beta_N px^{p-1} + Nh_N) + \exp\left\{N(\beta_N x^p + h_Nx - \log 2)\right\}\frac{\mathrm d}{\mathrm d x} \binom{N}{N(1+x)/2}\\& =  \zeta(x)(N\beta_N px^{p-1} + Nh_N) + \zeta(x)\left[-N \tanh^{-1}(x)+\frac{x}{1-x^2}+O(N^{-1})\right]\\& =  \zeta(x)\left(NH_N'(x) + \frac{x}{1-x^2} + O(N^{-1})\right),
			\end{align*}
			completing the proof of Lemma \ref{xiprime}.
		\end{proof}
		
		Now, we proceed with the proof of Lemma \ref{imp1}. First note that, since $H_N'(m_*(N)) = 0$, we have by the mean value theorem,
		\begin{equation}\label{imp1eq1}
			\sup_{x \in A_{N,\alpha}}\big|H_N'(x)\big| \leq \sup_{x \in A_{N,\alpha}}\big|x-m_*(N)\big|\sup_{x\in A_{N,\alpha}}|H_N''(x)|=O\left(N^{-\frac{1}{2}+\alpha}\right).
		\end{equation}
		It follows from \eqref{imp1eq1} and Lemma \ref{xiprime} that
		\begin{equation}\label{imp1eq2}
			\sup_{x \in A_{N,\alpha}}|\zeta'(x)| \leq O\left(N^{\frac{1}{2}+\alpha}\right)\sup_{x \in A_{N,\alpha}}\zeta(x) .
		\end{equation} 
		Using \eqref{eq:imp11} gives, 
		\begin{equation}\label{imp1eq3}
			\sup_{x \in A_{N,\alpha}} \zeta(x) \leq \left(1+O(N^{-1})\right) \zeta(m_*(N))\sup_{x\in A_{N,\alpha}} \sqrt{\frac{1-m_*(N)^2}{1-x^2}} = \zeta(m_*(N))O(1).
		\end{equation}
		Lemma \ref{imp1} now follows from \eqref{imp1eq2} and \eqref{imp1eq3}.  \qed

	\subsection{Proof of Theorem \ref{cltun} when $(\beta, h)$ is $p$-special}\label{sec:pfcltun_II} 
	
	When $(\beta, h)$ is $p$-special, we consider local perturbations of the parameters 
	$$(\beta_N,h_N) :=\left(\beta + \bar{\beta} N^{-\frac{3}{4}}, h + \bar{h} N^{-\frac{3}{4}}\right)$$
	as in the statement of Theorem \ref{cltun} (3).  
	Note that in this case the function $H_{\beta, h, p}$ still has a unique maximizer $m_*=m_*(\beta, h, p)$, but $H_{\beta,h,p}''(m_*) = 0$. The proof strategy here has the same broad roadmap as in the $p$-regular case, with relevant modifications while taking Taylor expansions, since $H_{\beta,h,p}''(m_*) = 0$. As before, the first step is to prove the concentration of $\os$ within a vanishing neighborhood of $m_*$ (Lemma \ref{irrconch}). Here, the concentration window turns out to be a little more inflated, that is, its length is of order $N^{-\frac{1}{4} + \alpha}$, for $\alpha > 0$. Next, we approximate the partition function $\bar Z_N$, where, since the second derivative of $H$ is zero at the maximizer, we need to consider derivatives up to order four to accurately approximate $\bar Z_N$ (Lemma \ref{ex2}). The details of the proof are presented below. 
	
	Throughout this section, as usual, we will denote $H_{\beta,h,p}$ by $H$, $H_{\beta_N, h_N,p}$ by $H_N$, the unique global maximizer of $H_{\beta_N, h_N,p}$ (for large $N$) by $m_*(N)$, $\p_{\beta_N,h_N,p} $ by $\bar{\p}$, $Z_N(\beta_N,h_N,p)$ by $\bar{Z}_N$ and $F_N(\beta_N,h_N,p)$ by $\bar{F}_N$. As outlined above, the first step in the proof of Theorem \ref{cltun} when $(\beta, h)$ is $p$-special, is to show the concentration of $\os$ within a vanishing neighborhood of $m_*(N)$. In the $p$-special case, this is more delicate, because it requires Taylor expansions up to the fourth order term. Here, the concentration window turns out to be a bit more inflated as well, and is given by:
	\begin{equation}\label{concwind}
		\ma_{N,\alpha} := (m_*(N)-N^{-\frac{1}{4} + \alpha},m_*(N) +N^{-\frac{1}{4} + \alpha} ). 
	\end{equation}

	\begin{lem}\label{irrconch} Suppose  $(\beta, h) \in \Theta$ is $p$-special. Fix $\alpha \in \left(0,\frac{1}{20}\right]$ and let $\ma_{N,\alpha}$ be as in \eqref{concwind}. Then,
		\begin{equation*}
			\bar{\mathbb P}\left(\os \in \ma_{N,\alpha}^c\right) = \exp\left\{\frac{1}{24}N^{4\alpha} H^{(4)}(m_*)(1+o(1)) \right\}O(N^{\frac{3}{2}}).
		\end{equation*}	
	\end{lem}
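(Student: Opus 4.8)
The plan is to mirror the three-step argument in the proof of Lemma~\ref{conc}, replacing the second-order Taylor analysis by a fourth-order one, since $H''(m_*)=0$ at a $p$-special point. First, exactly as in the passage leading to \eqref{sb} from the Talagrand cardinality bounds \eqref{boundtal}, one writes, for $N$ large enough that $\ma_{N,\alpha}$ lies in a fixed closed subinterval of $(-1,1)$ (possible since $m_*(N)\to m_*\in(-1,1)$),
$$\bar{\p}\left(\os\in\ma_{N,\alpha}^c\right)=\exp\left\{N\left(\sup_{x\in\ma_{N,\alpha}^c}H_N(x)-H_N(m_*(N))\right)\right\}O(N^{3/2}).$$
One then argues --- from the unimodal shape of $H$ near its (now degenerate) maximizer together with the uniform convergence of $H_N$ and its low-order derivatives to those of $H$, i.e.\ the $p$-special counterpart of Lemma~\ref{mest} --- that for all large $N$ the supremum over $\ma_{N,\alpha}^c\cap[-1,1]$ is attained at one of the two endpoints $m_*(N)\pm N^{-1/4+\alpha}$, so that it only remains to estimate $H_N(m_*(N)\pm N^{-1/4+\alpha})-H_N(m_*(N))$.

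The crucial new ingredient is quantitative control of the perturbed maximizer: $m_*(N)\to m_*$ and, more precisely, $m_*(N)-m_*=O(N^{-1/4})$. Indeed $H_N'(m_*(N))=0$ forces $H'(m_*(N))=-N^{-3/4}(\bar{\beta}pm_*(N)^{p-1}+\bar{h})=O(N^{-3/4})$, and since $H'(m_*)=H''(m_*)=H'''(m_*)=0$ at a $p$-special point (part of the characterization established in Lemma~\ref{derh22}), the local expansion $H'(x)=\tfrac16 H^{(4)}(m_*)(x-m_*)^3(1+o(1))$ yields $(m_*(N)-m_*)^3=O(N^{-3/4})$. As $H_N$ differs from $H$ by a polynomial with $O(N^{-3/4})$ coefficients and $H''(m_*)=H'''(m_*)=0$, this gives
$$H_N''(m_*(N))=O(N^{-1/2}),\qquad H_N'''(m_*(N))=O(N^{-1/4}),\qquad H_N^{(4)}(m_*(N))=H^{(4)}(m_*)+o(1),$$
with $H^{(4)}(m_*)<0$ and $H_N^{(5)}$ uniformly bounded near $m_*$.

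With these bounds in hand, a fourth-order Taylor expansion at $m_*(N)$ (using $H_N'(m_*(N))=0$) has quadratic, cubic, quartic and remainder contributions at the endpoints $m_*(N)\pm N^{-1/4+\alpha}$ of orders $O(N^{-1+2\alpha})$, $O(N^{-1+3\alpha})$, $\tfrac1{24}N^{-1+4\alpha}H^{(4)}(m_*)(1+o(1))$ and $O(N^{-5/4+5\alpha})$ respectively. For $\alpha\in(0,\tfrac1{20}]$ the first, second and fourth of these are $o(N^{-1+4\alpha})$ (so the sign of $H_N''(m_*(N))$ is immaterial), whence $\sup_{x\in\ma_{N,\alpha}^c}H_N(x)-H_N(m_*(N))=\tfrac1{24}N^{-1+4\alpha}H^{(4)}(m_*)(1+o(1))$. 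Multiplying by $N$ and substituting into the first display yields $\bar{\p}(\os\in\ma_{N,\alpha}^c)=\exp\{\tfrac1{24}N^{4\alpha}H^{(4)}(m_*)(1+o(1))\}O(N^{3/2})$, as claimed.

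\textbf{Main obstacle.} The delicate step --- absent in Lemma~\ref{conc}, where $H''(m_*)<0$ kept the analysis cleanly second order --- is the quantitative control of the degenerate maximizer $m_*(N)$ and the resulting bounds $H_N''(m_*(N))=O(N^{-1/2})$, $H_N'''(m_*(N))=O(N^{-1/4})$, which are exactly what guarantee that the quadratic and cubic contributions do not compete with the quartic term governing the exponent. A secondary point requiring care is justifying, for the wider ($N^{-1/4+\alpha}$-scale) window $\ma_{N,\alpha}$, that the supremum of $H_N$ over $\ma_{N,\alpha}^c$ is attained at the endpoints; this rests on the global shape of $H$ at $p$-special points and the $p$-special analogue of Lemma~\ref{mest}.
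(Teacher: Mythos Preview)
Your proposal is correct and follows essentially the same route as the paper's proof: both reduce to the Talagrand-type bound \eqref{sb}, identify the supremum on $\ma_{N,\alpha}^c$ with the value at an endpoint, and then Taylor-expand $H_N$ at $m_*(N)$ using the derivative estimates $H_N^{(3)}(m_*(N))=O(N^{-1/4})$ and $H_N^{(4)}(m_*(N))\to H^{(4)}(m_*)$ coming from $m_*(N)-m_*=O(N^{-1/4})$ (which is exactly the content of Lemma~\ref{mnminusm}). The only substantive difference is the treatment of the quadratic term: the paper simply drops it using $H_N''(m_*(N))\le 0$ (since $m_*(N)$ is a maximizer), whereas you bound its magnitude via $H_N''(m_*(N))=O(N^{-1/2})$, making the sign irrelevant; both routes are valid and lead to the same conclusion.
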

	
	\begin{proof}
		It follows from the proof of Lemma \ref{conc} and the fact $H_N''(m_*(N)) \leq 0$, that
		\begin{align}\label{simtoconc}
			\bar{\mathbb P} & (\os \in \ma_{N,\alpha}^c)   \nonumber \\ 
			&\leq  \exp\left\{N\left( H_N\left(m_*(N) \pm N^{-\frac{1}{4}+\alpha}\right) - H_N\left(m_*(N)\right)\right)\right\} O(N^{\frac{3}{2}})\nonumber\\
			&\leq  \exp\left\{\frac{1}{6} N^{\frac{1}{4}+3\alpha}H_N^{^{(3)}} (m_*(N)) + \frac{1}{24}N^{4\alpha} H_N^{(4)}(m_*(N)) + O\left(N^{-\frac{1}{4} + 5\alpha}\right)\right\} O(N^{\frac{3}{2}}). 
		\end{align}	
		Now, it follows from Lemma \ref{mnminusm}, that $|H_N^{(3)}(m_*(N))| =  O(N^{-1/4})$. Hence, $N^{(1/4) + 3\alpha} H_N^{(3)}(m_*(N)) + N^{4\alpha}H_N^{(4)}(m_*(N)) = N^{4\alpha}H^{(4)}(m_*)(1+o(1))$, and Lemma \ref{irrconch} follows from \eqref{simtoconc}. 
	\end{proof}

Similar to Lemma \ref{imp1} in the $p$-regular case, we need an estimate on the function $\zeta$ (recall \eqref{xidef}) in the $p$-special case. In this case, the bound on $\zeta'$ is better, and holds on a slightly larger region, as well.  
	
	\begin{lem}\label{xiprimeirreg}
		Let $m_*(N)$ be the unique global maximizer of $H_N := H_{\beta_N,h_N,p}$, where $(\beta_N,h_N) := \left(\beta + \bar{\beta}N^{-3/4}, h +\bar{h}N^{-3/4}\right)$ for some $\bar{\beta},\bar{h}\in \mathbb{R}$, and $(\beta,h)$ is a $p$-special point. Then, for all $\alpha \geq 0$,
		$$\sup_{x\in \mathcal{A}_{N,\alpha}} |\zeta'(x)| = \zeta(m_*(N))O\left(N^{\frac{1}{4}+3\alpha}\right)$$
		where $\mathcal{A}_{N,\alpha} := \left(m_*(N) - N^{-\frac{1}{4}+\alpha}, m_*(N) + N^{-\frac{1}{4} + \alpha}\right)$.
	\end{lem}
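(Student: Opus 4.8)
The plan is to mimic the proof of Lemma \ref{imp1} verbatim, with the Taylor expansions carried to one higher order because the second derivative of $H_{\beta,h,p}$ vanishes at the $p$-special maximizer. First I would establish the analogue of Lemma \ref{xiprime} (which is already stated there without any regularity hypothesis on $(\beta,h)$, so it applies here as well): for any sequence $x\in(-1,1)$ bounded away from $\pm 1$,
\[
\zeta'(x) = \zeta(x)\left(NH_N'(x) + \frac{x}{1-x^2} + O(N^{-1})\right).
\]
Since $m_*(N)$ is a stationary point of $H_N$, i.e. $H_N'(m_*(N))=0$, the main task is to bound $\sup_{x\in\mathcal{A}_{N,\alpha}}|H_N'(x)|$ on the inflated window $\mathcal{A}_{N,\alpha}=(m_*(N)-N^{-\frac14+\alpha},\,m_*(N)+N^{-\frac14+\alpha})$.

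The key point is that, at a $p$-special point, $H_{\beta,h,p}''(m_*)=0$, and by Lemma \ref{mnminusm} (invoked in the proof of Lemma \ref{irrconch}) one has $H_N''(m_*(N)) = O(N^{-1/2})$ and $H_N^{(3)}(m_*(N)) = O(N^{-1/4})$, while $H_N^{(4)}$ is uniformly bounded on a fixed closed subinterval of $(-1,1)$ containing $m_*(N)$. Then a third-order Taylor expansion around $m_*(N)$ gives, for $x\in\mathcal{A}_{N,\alpha}$,
\[
H_N'(x) = H_N''(m_*(N))(x-m_*(N)) + \tfrac12 H_N^{(3)}(m_*(N))(x-m_*(N))^2 + O\!\left((x-m_*(N))^3\right),
\]
so that
\[
\sup_{x\in\mathcal{A}_{N,\alpha}}|H_N'(x)| = O\!\left(N^{-\frac12}\cdot N^{-\frac14+\alpha}\right) + O\!\left(N^{-\frac14}\cdot N^{-\frac12+2\alpha}\right) + O\!\left(N^{-\frac34+3\alpha}\right) = O\!\left(N^{-\frac34+3\alpha}\right),
\]
the last term dominating. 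Multiplying by $N$ yields $\sup_{x\in\mathcal{A}_{N,\alpha}}|NH_N'(x)| = O(N^{\frac14+3\alpha})$, which dominates the $O(1)$ contribution of $x/(1-x^2)$ and the $O(N^{-1})$ error. Hence $\sup_{x\in\mathcal{A}_{N,\alpha}}|\zeta'(x)| = O(N^{\frac14+3\alpha})\sup_{x\in\mathcal{A}_{N,\alpha}}\zeta(x)$.

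Finally, to replace $\sup_{x\in\mathcal{A}_{N,\alpha}}\zeta(x)$ by $\zeta(m_*(N))$ up to constants, I would invoke Lemma \ref{xiact}: $\zeta(x) = \sqrt{2/(\pi N(1-x^2))}\,e^{NH_N(x)}(1+O(N^{-1}))$. Since $H_N(x)\le H_N(m_*(N))$ on $\mathcal{A}_{N,\alpha}$ (as $m_*(N)$ is the global maximizer of $H_N$ for large $N$), and since $1-x^2$ is bounded away from $0$ and comparable to $1-m_*(N)^2$ uniformly over the shrinking window, we get $\sup_{x\in\mathcal{A}_{N,\alpha}}\zeta(x) = O(1)\,\zeta(m_*(N))$. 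Combining the two displays gives $\sup_{x\in\mathcal{A}_{N,\alpha}}|\zeta'(x)| = \zeta(m_*(N))\,O(N^{\frac14+3\alpha})$, as claimed. The only mildly delicate point — and the one I'd want to state carefully rather than the routine Taylor bookkeeping — is the justification that $m_*(N)$ is indeed the unique global maximizer of $H_N$ for all large $N$ with $m_*(N)\to m_*$, together with the derivative estimates $H_N''(m_*(N))=O(N^{-1/2})$, $H_N^{(3)}(m_*(N))=O(N^{-1/4})$; both are furnished by Lemma \ref{derh44}(1) and Lemma \ref{mnminusm}, so no new work is needed.
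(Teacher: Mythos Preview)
Your proof is correct and follows essentially the same route as the paper: both arguments reduce to showing $\sup_{x\in\mathcal{A}_{N,\alpha}}|H_N'(x)|=O(N^{-3/4+3\alpha})$ and then conclude via Lemma~\ref{xiprime} and Lemma~\ref{xiact} exactly as in Lemma~\ref{imp1}. The only cosmetic difference is that the paper first bounds $\sup_{x\in\mathcal{A}_{N,\alpha}}|H_N''(x)|=O(N^{-1/2+2\alpha})$ by Taylor expanding $H''$ around $m_*$ (using $H''(m_*)=H^{(3)}(m_*)=0$) and then applies one mean-value step, whereas you Taylor expand $H_N'$ directly around $m_*(N)$ using the derivative estimates $H_N''(m_*(N))=O(N^{-1/2})$, $H_N^{(3)}(m_*(N))=O(N^{-1/4})$ furnished by Lemma~\ref{mnminusm}; both computations yield the same bound.
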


	\begin{proof}
		The proof of Lemma \ref{xiprimeirreg} is similar to that of Lemma \ref{imp1}, the only difference being a change in the estimate of $\sup_{x \in \mathcal{A}_{N,\alpha}} |H_N'(x)|$ from the estimate in \eqref{imp1eq1}. Note that
		\begin{align*}
			\sup_{x\in \mathcal{A}_{N,\alpha}} |H_N''(x)| &= \sup_{x\in \mathcal{A}_{N,\alpha}} |H''(x)| + O\left(N^{-\frac{3}{4}}\right)\\  
			&\leq \sup_{x \in \mathcal{A}_{N,\alpha}} \tfrac{1}{2}(x-m_*)^2\sup_{x\in \mathcal{I}(\mathcal{A}_{N,\alpha}\cup \{m_*\})}H^{(4)} (x) + O\left(N^{-\frac{3}{4}}\right) = O\left(N^{-\frac{1}{2}+ 2\alpha}\right),
		\end{align*}
		where $m_*$ denotes the global maximizer of $H_{\beta,h,p}$ and for a set $A \subseteq \mathbb{R}$, $\mathcal{I}(A)$ denotes the smallest interval containing $A$. The last equality follows from the observation $$\sup_{x\in \mathcal{A}_{N,\alpha}} |x-m_*| \leq \sup_{x\in \mathcal{A}_{N,\alpha}} |x-m_*(N)| + |m_*(N) - m_*| \leq N^{-\frac{1}{4} + \alpha} + O\left(N^{-\frac{1}{4}}\right) = O\left(N^{-\frac{1}{4} + \alpha}\right),$$ by Lemma \ref{mnminusm}. Following \eqref{imp1eq1}, we have
		$$\sup_{x\in \mathcal{A}_{N,\alpha}} |H_N'(x)| = O\left(N^{-\frac{3}{4} + 3\alpha}\right).$$ The rest of the proof is exactly same as that of Lemma \ref{imp1}. 
	\end{proof}

	The next step in the proof of Theorem \ref{cltun} when $(\beta, h)$ is $p$-special is the approximation of the partition function.

	\begin{lem}\label{ex2} Suppose  $(\beta, h) \in \Theta$ is $p$-special, and let $(\beta_N,h_N) = (\beta+ N^{-\frac{3}{4}}\bar{\beta},~h + N^{-\frac{3}{4}}\bar{h})$. Then for $N$ large enough, the partition function $\bar{Z}_N$ can be expanded as
		\begin{equation*}
			\bar{Z}_N =  \frac{N^\frac{1}{4}e^{N H_N(m_*(N))}}{\sqrt{2\pi(1-m_*(N)^2)}}\int_{-\infty}^\infty e^{\eta_{\bar{\beta},\bar{h},p}(y)} \mathrm d y \left(1+o(1)\right),
		\end{equation*}
		where $\eta_{\bar{\beta},\bar{h},p}(y)= a y^2 +  b y^3 + c y^4$, with 	
		\begin{equation*}
			a:=\frac{(6(\bar{\beta}pm_*^{p-1} + \bar{h}))^{\frac{2}{3}}\left(H^{(4)}(m_*)\right)^{\frac{1}{3}}}{4}, ~ b:= - \frac{(6(\bar{\beta}pm_*^{p-1} + \bar{h}))^{\frac{1}{3}}\left(H^{(4)}(m_*)\right)^{\frac{2}{3}}}{6}, ~c:= \frac{H^{(4)}(m_*)}{24}.
		\end{equation*}
	\end{lem}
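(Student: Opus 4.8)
\emph{Proof strategy.} The plan is to adapt the three-stage argument used for Lemma~\ref{ex} in the $p$-regular case, replacing the second-order Laplace expansion of $H_N$ --- which is degenerate here since $H''_{\beta,h,p}(m_*)=0$ --- by a fourth-order one. As there, the first stage is to invoke the concentration estimate of Lemma~\ref{irrconch}, which gives $\bar{\mathbb P}(\os\in\ma_{N,\alpha}^c)=o(1)$, to write $\bar{Z}_N=(1+o(1))\sum_{m\in\mathcal{M}_N\cap\ma_{N,\alpha}}\zeta(m)$, with $\zeta$ the function \eqref{xidef} formed from the perturbed parameters $(\beta_N,h_N)$ and $\ma_{N,\alpha}$ the window \eqref{concwind} of length $2N^{-1/4+\alpha}$. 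The second stage replaces this sum by an integral: Lemma~\ref{Riemann} applied over $\ma_{N,\alpha}$ (now with $n=\Theta(N^{3/4+\alpha})$ grid points) together with the Stirling approximation of the continuous binomial coefficient (Lemma~\ref{stir}) gives, exactly as in the derivation of \eqref{fin1},
\[
\bar{Z}_N=(1+o(1))\,\frac{N^{1/2}}{\sqrt{2\pi}}\int_{\ma_{N,\alpha}}\frac{e^{NH_N(x)}}{\sqrt{1-x^2}}\,\mathrm dx,
\]
where one checks, using the wider window and the correspondingly slower bounds on $\sup_{\ma_{N,\alpha}}|\zeta'|$, that all error terms are $o(1)$ for $\alpha\le\tfrac{1}{20}$.

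\emph{Local expansion.} The substance of the lemma is in the third stage, the analysis of this integral. Substitute $x=m_*(N)+yN^{-1/4}$, so $\mathrm dx=N^{-1/4}\mathrm dy$, the window becomes $\{|y|\le N^\alpha\}$, and $(1-x^2)^{-1/2}=(1-m_*(N)^2)^{-1/2}(1+O(yN^{-1/4}))$. Since $H_N'(m_*(N))=0$ and the derivatives $H_N^{(s)}$ are uniformly bounded on compact subsets of $(-1,1)$, Taylor's theorem gives
\[
N\big[H_N(m_*(N)+yN^{-1/4})-H_N(m_*(N))\big]=\tfrac{1}{2}N^{1/2}H_N''(m_*(N))\,y^2+\tfrac{1}{6}N^{1/4}H_N^{(3)}(m_*(N))\,y^3+\tfrac{1}{24}H_N^{(4)}(m_*(N))\,y^4+O(N^{-1/4+5\alpha}).
\]
The crucial input, provided by Lemma~\ref{mnminusm}, is that at a $p$-special point $H'(m_*)=H''(m_*)=H^{(3)}(m_*)=0$, so solving the stationarity equation $H_N'(m_*(N))=0$ via the near-$m_*$ expansion $H'(x)=\tfrac{H^{(4)}(m_*)}{6}(x-m_*)^3(1+o(1))$ yields $m_*(N)-m_*=\big(-6(\bar\beta pm_*^{p-1}+\bar h)/H^{(4)}(m_*)\big)^{1/3}N^{-1/4}(1+o(1))$. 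Feeding this into $H_N''(m_*(N))\approx\tfrac{H^{(4)}(m_*)}{2}(m_*(N)-m_*)^2$ and $H_N^{(3)}(m_*(N))\approx H^{(4)}(m_*)(m_*(N)-m_*)$ (the $O(N^{-3/4})$ perturbation terms being of lower order) shows that the three coefficients above converge to $2a$, $6b$, and $24c=H^{(4)}(m_*)$, respectively, with $a,b,c$ the constants in the statement; hence the rescaled integrand converges pointwise to $e^{\eta_{\bar\beta,\bar h,p}(y)}$.

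\emph{Passing to the limit.} Since $H_N^{(4)}(m_*(N))\to H^{(4)}(m_*)<0$, for all large $N$ the quartic coefficient in the exponent is $\le\tfrac{1}{2}H^{(4)}(m_*)<0$, and this term dominates the cubic and quadratic ones, furnishing an $N$-uniform integrable majorant on $\{|y|\le N^\alpha\}$; together with the elementary tail bound $\int_{|y|>N^\alpha}e^{\eta_{\bar\beta,\bar h,p}(y)}\,\mathrm dy\to0$, dominated convergence yields
\[
\int_{\ma_{N,\alpha}}\frac{e^{NH_N(x)}}{\sqrt{1-x^2}}\,\mathrm dx=\frac{N^{-1/4}e^{NH_N(m_*(N))}}{\sqrt{1-m_*(N)^2}}\int_{-\infty}^{\infty}e^{\eta_{\bar\beta,\bar h,p}(y)}\,\mathrm dy\,(1+o(1)).
\]
Inserting this into the first display and absorbing the $o(1)$ errors from concentration, Riemann approximation, and Stirling gives the claimed expansion of $\bar{Z}_N$.

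\emph{Main obstacle.} The delicate point, where the degeneracy $H''(m_*)=0$ is felt, is the second stage above: pinning down the $N^{-1/4}$-scale asymptotics of $m_*(N)-m_*$ and verifying that $N^{1/2}H_N''(m_*(N))\to2a$ and $N^{1/4}H_N^{(3)}(m_*(N))\to6b$ with exactly these constants. This requires a two-term analysis of the stationarity equation that must also accommodate the degenerate sub-case $\bar\beta pm_*^{p-1}+\bar h=0$, in which $a=b=0$ and $m_*(N)-m_*$ is of strictly smaller order but the stated limits persist --- precisely the content of Lemma~\ref{mnminusm}, on which the whole computation rests.
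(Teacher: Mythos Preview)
Your argument is correct and follows essentially the same route as the paper's: concentration via Lemma~\ref{irrconch}, Riemann approximation (the paper cites Lemma~\ref{xiprimeirreg} for the $\sup|\zeta'|$ bound you allude to), Stirling via Lemma~\ref{xiact}, and then the fourth-order Taylor expansion of $NH_N$ around $m_*(N)$ with the derivative asymptotics supplied by Lemma~\ref{mnminusm}. The only cosmetic difference is that the paper packages the Laplace step as a separate Lemma~\ref{Laplaceirreg} with an explicit $(1+O(N^{-1/4+5\alpha}))$ error, whereas you carry it out inline and finish with dominated convergence; both yield the required $(1+o(1))$.
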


	\begin{proof}
		Once again, as in the proof of Lemma \ref{ex}, it follows from Lemma \ref{irrconch}, that for $\alpha \in \left(0,\frac{1}{20}\right]$,
		\begin{equation}\label{secstep22}
			\bar{Z}_N = \left(1+ O\left(e^{-N^\alpha}\right) \right) \sum_{m \in \mathcal{M}_N\bigcap \mathcal{A}_{N,\alpha}} \zeta(m),
		\end{equation}
		where $\zeta:[-1,1] \rightarrow \mathbb{R}$ is defined in \eqref{xidef} and $\mathcal{A}_{N,\alpha}$ is defined in \eqref{concwind}.
		It also follows from \cite[Lemma A.2]{cmp} and Lemma \ref{xiprimeirreg}, exactly as in the proof of Lemma \ref{ex}, that
		\begin{equation}\label{rax2}
			\left|\int_{\ma_{N,\alpha}} \zeta(x)\mathrm d x - \frac{2}{N} \sum_{m \in \mathcal{M}_N\bigcap \ma_{N,\alpha}} \zeta(m)\right| = O\left(N^{-1 + 4\alpha}\right)\zeta(m_*(N)).
		\end{equation}
		Hence, we have from \cite[Lemma A.4]{cmp}, \eqref{rax2}, Lemma \ref{imp1}, and Lemma \ref{mnminusm}, 
		\begin{align}\label{fin3}
			& \sum_{m \in \mathcal{M}_N\bigcap \ma_{N,\alpha}} \zeta(m)\nonumber\\
			& =  \frac{N}{2} \int_{\ma_{N,\alpha}} \zeta(x)\mathrm d x + O(N^{4\alpha}) \zeta(m_*(N))\nonumber\\
			& =  \frac{N^{\frac{1}{2}}}{2}\left(1+O(N^{-1})\right)\int_{\ma_{N,\alpha}} e^{NH_N(x)}\sqrt{\frac{2}{\pi (1-x^2)}}\mathrm d x + O(N^{4\alpha}) \zeta(m_*(N))\nonumber\\
			& =   O(N^{4\alpha}) \zeta(m_*(N))+\frac{N^\frac{1}{4}}{\sqrt{2\pi(1-m_*(N)^2)}} e^{N H_N(m_*(N))}  \int_{-N^\alpha}^{N^\alpha}e^{\eta_{\bar{\beta},\bar{h},p}(y)}\mathrm d y \left(1+O\left(N^{-\frac{1}{4}+5\alpha}\right)\right) \nonumber\\
			& =  \frac{N^\frac{1}{4}e^{N H_N(m_*(N))}}{\sqrt{2\pi (1-m_*(N)^2)}}\int_{-\infty}^\infty e^{\eta_{\bar{\beta},\bar{h},p}(y)}\mathrm d y  (1+o(1))\left(1+O\left(N^{-\frac{1}{4}+5\alpha}\right)\right)\nonumber\\
			&+  \sqrt{\frac{2}{\pi N (1-m_*(N)^2)}} e^{NH_N(m_*(N))}\left(1+O(N^{-1})\right)O(N^{4\alpha})\nonumber\\
			& =  \frac{N^\frac{1}{4}e^{N H_N(m_*(N))}}{\sqrt{2\pi(1-m_*(N)^2)}}\int_{-\infty}^\infty e^{\eta_{\bar{\beta},\bar{h},p}(y)}\mathrm d y \left(1+o(1)\right).
		\end{align}
		Combining \eqref{secstep22} and \eqref{fin3}, we have:
		\begin{align*}
			\bar{Z}_N & =  \left(1+ O\left(e^{-N^\alpha}\right) \right)\left(1+o(1)\right)\frac{N^\frac{1}{4}e^{N H_N(m_*(N))}}{\sqrt{2\pi(1-m_*(N)^2)}}\int_{-\infty}^\infty e^{\eta_{\bar{\beta},\bar{h},p}(y)}\mathrm d y \nonumber\\& =  \left(1+o(1)\right)\frac{N^\frac{1}{4}e^{N H_N(m_*(N))}}{\sqrt{2\pi(1-m_*(N)^2)}}\int_{-\infty}^\infty e^{\eta_{\bar{\beta},\bar{h},p}(y)}\mathrm d y .
		\end{align*}
		This completes the proof of Lemma \ref{ex2}. 
	\end{proof}

\subsubsection{Completing the Proof of \eqref{eq:cltun_III}} As before, we start by computing the limiting moment generating function of $$N^{\frac{1}{4}}\left(\os - m_*(\beta,h,p)\right),$$ in the following lemma. 
	
	\begin{lem}\label{cltun133pr}
		For every $p$-special point $(\beta,h)\in \Theta$ and $\bar{\beta}, \bar{h} \in \mathbb{R}$, if $\bs \sim \mathbb{P}_{\beta+ N^{-\frac{3}{4}}\bar{\beta},~h + N^{-\frac{3}{4}}\bar{h},~p} $, then
		\begin{align}\label{complexp}
			\lim_{N \rightarrow \infty} & \e_{\beta+ N^{-\frac{3}{4}}\bar{\beta},~h + N^{-\frac{3}{4}}\bar{h},~p}  \left[e^{t N^{\frac{1}{4}}\left(\os - m_*(\beta,h,p)\right)} \right] \nonumber \\ 
			& \quad \quad = C_p(\bar \beta, \bar h, t) \exp\Big\{- t R_p(\bar \beta, \bar h, t)  + \eta_{\bar{\beta},\bar{h},p}\left( R_p(\bar \beta, \bar h, 0) -  R_p(\bar \beta, \bar h, t) \right) \Big\},
		\end{align}
		where $\eta_{\bar{\beta},\bar{h},p}$ is defined in the statement of Lemma \ref{ex2}, 
		$$C_p(\bar \beta, \bar h, t):=\frac{\int_{-\infty}^\infty e^{ \eta_{\bar{\beta},\bar{h}+t,p}(y) } \mathrm d y }{\int_{-\infty}^\infty e^{\eta_{\bar{\beta},\bar{h},p}(y) } \mathrm d y },$$
		and $R_p(\bar \beta, \bar h, t):=\left(\frac{6(\bar{\beta}pm_*^{p-1} +\bar{h}+t)}{H^{(4)}(m_*)}\right)^\frac{1}{3}$.
	\end{lem}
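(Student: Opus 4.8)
The plan is to follow the template of the $p$-regular proof of \eqref{eq:cltun_I}: express the moment generating function as a ratio of partition functions and insert the asymptotic expansion from Lemma \ref{ex2}. Write $\beta_N := \beta + \bar\beta N^{-3/4}$ and $h_N := h + \bar h N^{-3/4}$, and let $\phi_{\bar\beta,\bar h}(x) := \bar\beta x^p + \bar h x$ denote the perturbing polynomial, so $H_{\beta_N,h_N,p} = H + N^{-3/4}\phi_{\bar\beta,\bar h}$. Since shifting the field by $tN^{-3/4}$ multiplies every term of the partition sum by $e^{tN^{1/4}\os}$,
\begin{equation*}
\e_{\beta_N,h_N,p}\big[e^{tN^{1/4}(\os - m_*)}\big] = e^{-tN^{1/4}m_*}\,\frac{Z_N(\beta_N, h_N + tN^{-3/4}, p)}{Z_N(\beta_N, h_N, p)},
\end{equation*}
and both partition functions fall under Lemma \ref{ex2}: the denominator with perturbation pair $(\bar\beta,\bar h)$ and the numerator with $(\bar\beta,\bar h+t)$ (relevant perturbing polynomial $\phi_{\bar\beta,\bar h+t}(x)=\bar\beta x^p+(\bar h+t)x$). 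Denote by $m_*(N)$ and $\tilde m_*(N)$ the eventually-unique global maximizers of $H_N := H_{\beta_N,h_N,p}$ and $\tilde H_N := H_{\beta_N, h_N+tN^{-3/4}, p}$, both converging to $m_*$.

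Inserting the two expansions from Lemma \ref{ex2}, the $N^{1/4}$ pre-factors cancel, the factor $\sqrt{(1-m_*(N)^2)/(1-\tilde m_*(N)^2)}$ tends to $1$, and the ratio of the two $\eta$-integrals is exactly $C_p(\bar\beta,\bar h,t)$. Thus it remains to prove
\begin{equation*}
N\big[\tilde H_N(\tilde m_*(N)) - H_N(m_*(N))\big] - tN^{1/4}m_* \longrightarrow -tR_p(\bar\beta,\bar h,t) + \eta_{\bar\beta,\bar h,p}\big(R_p(\bar\beta,\bar h,0) - R_p(\bar\beta,\bar h,t)\big).
\end{equation*}
For this I would use two inputs: first, that at a $p$-special point $H'(m_*) = H''(m_*) = H^{(3)}(m_*) = 0$, so $H(m_*+u) = H(m_*) + \tfrac{1}{24}H^{(4)}(m_*)u^4 + O(u^5)$ (Lemma \ref{derh22} and Appendix \ref{sec:propH}); second, the maximizer asymptotics from Lemma \ref{mnminusm}, namely $m_*(N) = m_* - N^{-1/4}R_p(\bar\beta,\bar h,0) + o(N^{-1/4})$ and $\tilde m_*(N) = m_* - N^{-1/4}R_p(\bar\beta,\bar h,t) + o(N^{-1/4})$, which are the real roots of the critical-point cubic $\tfrac16 H^{(4)}(m_*)\delta^3 = -N^{-3/4}(\bar\beta p m_*^{p-1} + \bar h)$. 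Substituting $H_N = H + N^{-3/4}\phi_{\bar\beta,\bar h}$ and $\tilde H_N = H + N^{-3/4}\phi_{\bar\beta,\bar h+t}$ and Taylor-expanding $H_N(m_*(N))$ and $\tilde H_N(\tilde m_*(N))$ about $m_*$, the only $O(N^{1/4})$ contributions are $N^{1/4}\phi_{\bar\beta,\bar h}(m_*)$ and $N^{1/4}\phi_{\bar\beta,\bar h+t}(m_*) = N^{1/4}\phi_{\bar\beta,\bar h}(m_*) + tN^{1/4}m_*$, so the $N^{1/4}$ terms cancel in the difference once the explicit prefactor $e^{-tN^{1/4}m_*}$ is accounted for. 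Writing $R_0 := R_p(\bar\beta,\bar h,0)$, $R_t := R_p(\bar\beta,\bar h,t)$ and using $\bar\beta p m_*^{p-1} + \bar h = \tfrac16 H^{(4)}(m_*)R_0^3$ (and the analogue with $R_t$ for the $+t$ version), the surviving $O(1)$ part works out to $\tfrac18 H^{(4)}(m_*)(R_0^4 - R_t^4)$; a short algebraic check with the explicit coefficients $a,b,c$ of $\eta_{\bar\beta,\bar h,p}$ from Lemma \ref{ex2} shows that $-tR_t + \eta_{\bar\beta,\bar h,p}(R_0 - R_t)$ equals the same value, which completes the proof.

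The delicate point is the order-counting in this last Taylor expansion: one must verify that every term discarded as $o(1)$ is genuinely $o(1)$ after multiplication by $N$. This relies on the precise $o(N^{-1/4})$ control of $m_*(N)-m_*$ and the uniform higher-derivative bounds from Lemma \ref{mnminusm} — so that, for instance, the quartic term contributes $\tfrac{1}{24}H^{(4)}(m_*)R_0^4 + o(1)$ and the quintic Taylor remainder contributes $N\cdot O(N^{-5/4}) = o(1)$ — and it also requires a separate, easy look at the degenerate case $\bar\beta p m_*^{p-1} + \bar h = 0$, where $R_0 = 0$, the maximizer $m_*(N)$ is even closer to $m_*$, and the limit reduces consistently. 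Finally one notes that $\beta_N \geq 0$ for large $N$, since $\check\beta_p > 0$ at every $p$-special point.
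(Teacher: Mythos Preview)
Your proposal is correct and follows essentially the same architecture as the paper: write the moment generating function as a ratio of partition functions, apply Lemma~\ref{ex2} to both (with perturbation pairs $(\bar\beta,\bar h)$ and $(\bar\beta,\bar h+t)$), and then control $N\big[\tilde H_N(\tilde m_*(N))-H_N(m_*(N))\big]-tN^{1/4}m_*$ via the maximizer asymptotics of Lemma~\ref{mnminusm}.

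The only noteworthy difference is the choice of expansion point. The paper Taylor-expands $H_N(\tilde m_*(N))$ around $m_*(N)$, using the estimates \eqref{id2}--\eqref{id4} of Lemma~\ref{mnminusm} for $H_N''(m_*(N))$, $H_N^{(3)}(m_*(N))$, $H_N^{(4)}(m_*(N))$; the resulting quadratic, cubic and quartic terms in $R_0-R_t$ then assemble \emph{directly} into $\eta_{\bar\beta,\bar h,p}(R_0-R_t)$, with the linear term $-tR_t$ coming from \eqref{cltstr3}. You instead expand both maximum values around $m_*$, exploiting $H'(m_*)=H''(m_*)=H^{(3)}(m_*)=0$ to reduce everything to the quartic term of $H$ and the linear term of $\phi$, arriving at the closed form $\tfrac18 H^{(4)}(m_*)(R_0^4-R_t^4)$, and then verify algebraically that this equals $-tR_t+\eta_{\bar\beta,\bar h,p}(R_0-R_t)$. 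Your route is slightly cleaner in the Taylor step (only one nonzero $H$-derivative at the expansion point) at the cost of an extra polynomial identity at the end; the paper's route produces the $\eta$-expression for free but has to track three nonvanishing derivatives of $H_N$ at $m_*(N)$. Either way the same ingredients (Lemmas~\ref{ex2} and~\ref{mnminusm}, and the vanishing from Lemma~\ref{derh22}) do all the work.
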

	\begin{proof} Once again, throughout this proof, we will denote $m_*(\beta,h,p)$ by $m_*$, $\beta + N^{-\frac{3}{4}} \bar{\beta}$ by $\beta_N$, and $h + N^{-\frac{3}{4}} \bar{h}$ by $h_N$. Fix $t \in \mathbb{R}$ and note that the moment generating function of $N^{\frac{1}{4}}\left(\os - m_*\right)$ at $t$ can be expressed as 
		\begin{equation}\label{cltstr1}
			\mathbb{E}_{\beta_N,h_N,p}e^{tN^{\frac{1}{4}}\left(\os - m_*\right)} = e^{-tN^{\frac{1}{4}}m_*}\frac{Z_N\left(\beta_N,h_N+N^{-\frac{3}{4}}t,p\right)}{Z_N(\beta_N,h_N,p)}.
		\end{equation}
		Using Lemma \ref{ex2} and the facts that $m_*(\beta_N,h_N,p) \rightarrow m_*$ and $m_*(\beta_N,h_N+N^{-\frac{3}{4}}t,p) \rightarrow m_*$, the right side of \eqref{cltstr1} simplifies to 
		\begin{align}\label{cltstr2}
			C_p(\bar \beta, \bar h, t)e^{-tN^{\frac{1}{4}}m_* + N\left\{H_{\beta_N, h_N+N^{-\frac{3}{4}}t,p} \left(m_*\left(\beta_N, h_N+N^{-\frac{3}{4}}t,p\right) \right) - H_{\beta_N, h_N,p} \left(m_*\left(\beta_N, h_N,p\right) \right) \right\} } (1+o(1)).
		\end{align}
		By Lemma \ref{mnminusm}, we have:
		\begin{equation}\label{cltstr3}
			N^\frac{1}{4} \left(m_*\left(\beta_N, h_N+N^{-\frac{3}{4}}t,p\right) - m_*\right) =  -R_p(\bar{\beta},\bar{h},t) + o(1).
		\end{equation}
		By a further Taylor expansion and using \eqref{mnminusm}, we have (denoting $H_N=H_{\beta_N, h_N,p} $), 
		\begin{align}\label{cltstr4}
			N\left\{H_{N} \left(m_*\left(\beta_N, h_N+N^{-\frac{3}{4}}t,p\right) \right) - H_{N} \left(m_*\left(\beta_N, h_N,p\right) \right) \right\}  = T_1 + T_2 + T_3 + T_4,
		\end{align} 
		where 
		\begin{align*}
			T_1&:=\tfrac{N}{2} \left\{ m_*\left(\beta_N, h_N+N^{-\frac{3}{4}}t,p\right) - m_*\left(\beta_N, h_N,p\right) \right \} ^2 H_{\beta_N,h_N,p}''\left(m_*\left(\beta_N, h_N,p\right)\right)\nonumber\\
			& = \tfrac{1}{2}\left\{ R_p(\bar \beta, \bar h, 0) - R_p(\bar \beta, \bar h, t) \right\} ^2\cdot \frac{1}{2}(6(\bar{\beta}pm_*^{p-1} + \bar{h}))^\frac{2}{3}\left(H^{(4)}(m_*)\right)^\frac{1}{3} + o(1), 
		\end{align*} 
		\begin{align*} 
			T_2 & := \tfrac{N}{6}\left\{ m_*\left(\beta_N, h_N+N^{-\frac{3}{4}}t,p\right) - m_*\left(\beta_N, h_N,p\right) \right \} ^3 H_{\beta_N,h_N,p}^{(3)}\left(m_*\left(\beta_N, h_N,p\right)\right)\nonumber\\
			& = - \tfrac{1}{6} \left\{ R_p(\bar \beta, \bar h, 0) - R_p(\bar \beta, \bar h, t) \right\} ^3(6(\bar{\beta}pm_*^{p-1} + \bar{h}))^\frac{1}{3}\left(H^{(4)}(m_*)\right)^\frac{2}{3} + o(1) , 
		\end{align*} 
		\begin{align*} 
			T_3 & := \tfrac{N}{24}\left\{ m_*\left(\beta_N, h_N+N^{-\frac{3}{4}}t,p\right) - m_*\left(\beta_N, h_N,p\right)\right\} ^4 H_{\beta_N,h_N,p}^{(4)}\left(m_*\left(\beta_N, h_N,p\right)\right)\nonumber\\ 
			& = \tfrac{1}{24} \left\{ R_p(\bar \beta, \bar h, 0) - R_p(\bar{\beta},\bar{h},t) \right\} ^4 H^{(4)}(m_*) + o(1), 
		\end{align*}
		and  
		$$T_4  := O(N\{ m_*(\beta_N, h_N+N^{-\frac{3}{4}}t,p) - m_*(\beta_N, h_N,p)\} ^5) = o(1).$$
		
		Now, using both \eqref{cltstr3} and \eqref{cltstr4}, we have 
		\begin{align}
			& N\left[H_{\beta_N, h_N+N^{-\frac{3}{4}}t,p} \left(m_*\left(\beta_N, h_N+N^{-\frac{3}{4}}t,p\right) \right) - H_{\beta_N, h_N,p} \left(m_*\left(\beta_N, h_N,p\right) \right) \right]\nonumber\\ 
			&= tN^{\frac{1}{4}}m_* - tR_p(\bar \beta, \bar h, t)  + \eta_{\bar{\beta},\bar{h},p}\left(R_p(\bar \beta, \bar h, 0) - R_p(\bar \beta, \bar h, t)\right) + o(1). \nonumber 
		\end{align}
		Using the above with  \eqref{cltstr1} and \eqref{cltstr2} Lemma \ref{cltun133pr} follows. 
	\end{proof}
	
	Although \eqref{complexp} is not readily recognizable as the moment generating function of any probability distribution, we will show below that it is indeed the moment generating function of the distribution $F_{\bar{\beta},\bar{h}}$ defined in \eqref{eq:beta_h_distribution}.
	
	\begin{lem}\label{wint}
		Let $F_{\bar{\beta},\bar{h}}$ be the distribution defined in \eqref{eq:beta_h_distribution}. Then, 
		\begin{align} 
			\label{cltstr6pr}
			&\int e^{tx} \mathrm d F_{\bar{\beta},\bar{h}}(x) =  C_p(\bar \beta, \bar h, t) \exp\Big\{- t R_p(\bar \beta, \bar h, t)  + \eta_{\bar{\beta},\bar{h},p}\left( R_p(\bar \beta, \bar h, 0) -  R_p(\bar \beta, \bar h, t) \right) \Big\}, 
		\end{align}
		with notations as in Lemma \ref{cltun133pr}.  
	\end{lem}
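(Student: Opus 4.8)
The plan is to evaluate both sides of \eqref{cltstr6pr} by reducing the quartic-exponential integrals to a common normal form via a ``depressed quartic'' shift. Write $\mu := \bar{\beta} p m_*^{p-1} + \bar{h}$ and $c := H^{(4)}(m_*)/24$; recall that at a $p$-special point $H^{(4)}(m_*) < 0$, so $c < 0$, which makes $F_{\bar{\beta},\bar{h}}$ in \eqref{eq:beta_h_distribution} a genuine probability measure and ensures that all integrals below converge absolutely. Since the linear coefficient in the exponent defining $F_{\bar{\beta},\bar{h}}$ is exactly $\mu$, the left-hand side of \eqref{cltstr6pr} can be written as
\begin{align*}
\int e^{tx}\,\mathrm d F_{\bar{\beta},\bar{h}}(x) = \frac{\int_{-\infty}^\infty \exp\{c x^4 + (\mu+t)x\}\,\mathrm d x}{\int_{-\infty}^\infty \exp\{c x^4 + \mu x\}\,\mathrm d x}.
\end{align*}
So it suffices to compute $J(\nu) := \int_{-\infty}^\infty \exp\{c x^4 + \nu x\}\,\mathrm d x$ for a generic real $\nu$ and then specialize to $\nu = \mu+t$ and $\nu = \mu$.

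The key step is to eliminate the linear term. Set $R_\nu := \bigl(6\nu / H^{(4)}(m_*)\bigr)^{1/3}$, which is the unique real number with $4 c R_\nu^3 = \nu$, and substitute $x = y - R_\nu$. Expanding $(y - R_\nu)^4$ makes the coefficient of $y$ equal to $-4 c R_\nu^3 + \nu = 0$, and a short computation gives $c x^4 + \nu x = c y^4 - 4 c R_\nu\, y^3 + 6 c R_\nu^2\, y^2 - 3 c R_\nu^4$. Comparing the coefficients $-4 c R_\nu$ and $6 c R_\nu^2$ with the definitions of $b$ and $a$ in Lemma \ref{ex2} (with $\mu$ replaced by $\nu$), one sees that the $y$-dependent part is precisely $\eta^{(\nu)}(y)$, where $\eta^{(\nu)}$ denotes $\eta_{\bar{\beta},\bar{h},p}$ with every occurrence of $\mu = \bar{\beta} p m_*^{p-1}+\bar{h}$ replaced by $\nu$; in particular $\eta^{(\mu)} = \eta_{\bar{\beta},\bar{h},p}$ and $\eta^{(\mu+t)} = \eta_{\bar{\beta},\bar{h}+t,p}$. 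Hence $J(\nu) = e^{-3 c R_\nu^4}\int_{-\infty}^\infty e^{\eta^{(\nu)}(y)}\,\mathrm d y$. Taking the ratio $J(\mu+t)/J(\mu)$, the two $\eta$-integrals assemble into $C_p(\bar{\beta},\bar{h},t)$ and the prefactors into $\exp\{3 c(R_0^4 - R_t^4)\}$, where $R_0 := R_\mu = R_p(\bar{\beta},\bar{h},0)$ and $R_t := R_{\mu+t} = R_p(\bar{\beta},\bar{h},t)$.

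What remains is the purely algebraic identity $3 c(R_0^4 - R_t^4) = - t R_t + \eta_{\bar{\beta},\bar{h},p}(R_0 - R_t)$, which matches the prefactor above with the form on the right-hand side of \eqref{cltstr6pr}. I would verify it by substituting $a = 6 c R_0^2$, $b = -4 c R_0$ (the $\nu = \mu$ instance of the coefficient comparison) and $t = 4 c (R_t^3 - R_0^3)$ (from $4 c R_\nu^3 = \nu$) into the right-hand side, expanding $(R_0 - R_t)^2$, $(R_0 - R_t)^3$, $(R_0 - R_t)^4$, and collecting powers of $R_0$ and $R_t$: all the mixed monomials $R_0^3 R_t$, $R_0^2 R_t^2$, $R_0 R_t^3$ cancel, leaving $3 c R_0^4 - 3 c R_t^4$. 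None of the steps is genuinely hard --- the only real idea is the depressed-quartic shift --- so the main obstacle is bookkeeping: keeping straight which parameter ($\mu$ versus $\nu = \mu+t$) enters each copy of $\eta$ and each $R$, handling the degenerate case $\mu = 0$ (where $R_0 = 0$ and $a = b = 0$, but the argument is unchanged), and confirming the cancellations in the final identity so that the prefactor reorganizes into $\eta_{\bar{\beta},\bar{h},p}(R_0 - R_t)$ rather than some other cubic expression.
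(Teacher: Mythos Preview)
Your proof is correct and uses the same underlying idea as the paper: the depressed-quartic shift $u = y - R_\nu$ that removes the linear term and turns the integrand $e^{cx^4+\nu x}$ into $e^{\eta^{(\nu)}(y)}$ up to the constant $e^{-3cR_\nu^4}$. The only cosmetic difference is direction: the paper starts from the right-hand side, applies the same substitution to both numerator and denominator, and reads off the left-hand side directly (the constants from the two shifts being equal, namely $3cR_0^4$), whereas you start from the left-hand side and then verify the identity $3c(R_0^4-R_t^4)=-tR_t+\eta_{\bar\beta,\bar h,p}(R_0-R_t)$ to recover the right-hand side; this identity is exactly what the paper's ``straightforward algebra'' hides.
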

	\begin{proof} 
		Let us denote the right side of \eqref{cltstr6pr} by $M(t)$. Define
		\begin{align*}
			\Delta(t,y) :=  - t R_p(\bar \beta, \bar h, t)  + \eta_{\bar{\beta},\bar{h},p}\left( R_p(\bar \beta, \bar h, 0) -  R_p(\bar \beta, \bar h, t) \right)  + \eta_{\bar{\beta},\bar{h}+t,p}(y),
		\end{align*}
		Note that
		\begin{equation}\label{num}
			M(t) =\frac{\int_{-\infty}^\infty e^{\Delta(t,y)}\enskip \!\! \mathrm d y }{\int_{-\infty}^\infty e^{\eta_{\bar{\beta},\bar{h},p}(y) } \mathrm d y }.
		\end{equation}
		Using the change of variables $u = y- R_p(\bar \beta, \bar h, t)$ and a straightforward algebra, we have 
		\begin{equation}\label{integrand}
			\Delta(t,y) = \frac{H^{(4)}(m_*)}{24} u^4 + (\bar{\beta} p m_*^{p-1}+\bar{h})u + tu + \frac{(6(\bar{\beta} p m_*^{p-1}+\bar{h}))^{\frac{4}{3}}}{8 (H^{(4)}(m_*))^{\frac{1}{3}}} 
		\end{equation}
		and 
		\begin{equation}\label{integrand2}
			\eta_{\bar{\beta},\bar{h},p}(y)=\frac{H^{(4)}(m_*)}{24}u^4 + (\bar{\beta} p m_*^{p-1}+\bar{h}) u + \frac{\left(6(\bar{\beta}pm_*^{p-1}+\bar{h})\right)^{\frac{4}{3}}}{8(H^{(4)}(m_*))^{\frac{1}{3}}}.
		\end{equation}
		
		Lemma \ref{wint} now follows on substituting \eqref{integrand} and \eqref{integrand2} in \eqref{num}.
	\end{proof}
	
	The proof of \eqref{eq:cltun_III} now follows from Lemmas \ref{cltun133pr} and \ref{wint}. This completes the proof of Theorem \ref{cltun} when $(\beta, h)$ is $p$-special.

	\subsection{Proof of Theorem \ref{cltun} when $(\beta, h)$ is $p$-critical}\label{nonpuniqueclt}
	
	Throughout this section we assume that $(\beta, h) \in \Theta$ is $p$-critical. This means, by definition and \cite[Lemma B.1]{cmp}, that the function $H = H_{\beta,h,p}$ has $K \in \{2, 3\}$ global maximizers, which we denote by  $m_1< \ldots<m_K$. It also follows from Lemma \ref{derh44}, that for sequences $\beta_N \rightarrow \beta$ and $h_N \rightarrow h$, the function $H_N := H_{\beta_N,h_N,p}$, for all large $N$, has local maximizers at $m_1(N), \ldots,m_K(N)$ such that $m_k(N) \rightarrow m_k$, as $N \rightarrow \infty$, for all $1\leq k \leq K$. As before, $\bar{\p}$ and $\bar{Z}_N$ will denote $\p_{\beta_N,h_N,p} $ and $Z_N(\beta_N,h_N,p)$, respectively.

	In presence of multiple global maximizers, the magnetization $\os$ will concentrate around the set of all global maximizers.  In fact, we can prove the following stronger result: Consider an open interval $A$ around a local maximizer $m$ such that $m$ is the unique global maximizer of $H$ over $A$. Then conditional on the event  $\os \in A$ (which is a rare event if $m$ is not a global maximizer), $\os$ concentrates around $m$. This is the first step in the proof of Theorem \ref{cltun} when $(\beta, h)$ is $p$-critical. To state the result precisely, assume that $m$ is a local maximizer of $H$ and let $m(N)$ be local maximizers of $H_N$ converging to $m$, which exist by Lemma \ref{derh44}. Define 
	\begin{align}\label{eq:An}
		A_{N,\alpha}(m(N)) = \left(m(N)-N^{-\frac{1}{2} + \alpha},m(N) +N^{-\frac{1}{2} + \alpha}\right). 
	\end{align} 
	The following lemma gives the conditional and, hence, the unconditional, concentration result of $\os$ around local maximizers.\footnote{The unconditional concentration derived in  \eqref{eq:multi_max_local_II} is not required in the proof of Theorem \ref{cltun}. Nevertheless, we include it for the sake of completeness.}
	
	\begin{lem}\label{lem:multiple_max} Suppose $(\beta, h) \in \Theta$ is $p$-critical. Then for $\alpha \in \left(0,\frac{1}{6}\right]$ fixed and $A_{N,\alpha}(m(N))$ as defined in \eqref{eq:An}, 
		\begin{equation}\label{eq:multi_max_local_I}
			\bar{\mathbb P}\left(\os \in A_{N,\alpha}(m(N))^c\big|
			\os \in A\right) = \exp\left\{\frac{1}{3}N^{2\alpha} H''(m) \right\}O(N^{\frac{3}{2}}),
		\end{equation} 
		for any interval $A \subseteq [-1,1]$ such that $m \in \textrm{int}(A)$ and $H(m) > H(x)$, for all $x \in \mathrm{cl}(A) \setminus \{m\}$\footnote{For any set $A \subseteq \R$, $\textrm{int}(A)$ and $\mathrm{cl}(A)$ denote the topological interior and closure of $A$, respectively.} As a consequence, for $A_{N,\alpha,K} := \bigcup_{k=1}^K A_{N,\alpha}(m_k(N))$, 	
		\begin{equation}\label{eq:multi_max_local_II}
			\bar{\mathbb P}\left(\os \in A_{N,\alpha,K}^c\right) = \exp\left\{\frac{1}{3}N^{2\alpha} \max_{1\leq k\leq K} H''(m_k) \right\}O(N^{\frac{3}{2}}).
		\end{equation}
	\end{lem}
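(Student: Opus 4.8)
The plan is to imitate the proof of Lemma \ref{conc}, restricting every sum to configurations whose average lies in the fixed interval $A$. Writing $A_{m'} := \{\bs \in \sa_N : \os = m'\}$ as there, the law of $\os$ under $\bar{\mathbb P}$ gives, for $N$ large enough that $\mathcal M_N \cap A \neq \emptyset$,
$$\bar{\mathbb P}\left(\os \in A_{N,\alpha}(m(N))^c \,\big|\, \os \in A\right) = \frac{\sum_{m' \in \mathcal M_N \cap A \setminus A_{N,\alpha}(m(N))} |A_{m'}|\, e^{N(\beta_N (m')^p + h_N m')}}{\sum_{m' \in \mathcal M_N \cap A} |A_{m'}|\, e^{N(\beta_N (m')^p + h_N m')}}.$$
First I would bound the numerator above using the upper Talagrand estimate in \eqref{boundtal}, which yields $2^N(N+1)\sup_{x \in A \setminus A_{N,\alpha}(m(N))} e^{NH_N(x)}$, and the denominator below by keeping the single term at the point $m'' \in \mathcal M_N$ nearest $m(N)$ (which lies in $A$ for $N$ large, since $m(N) \to m \in \mathrm{int}(A)$) together with the lower Talagrand estimate; since $H_N'(m(N)) = 0$ and $H_N''$ is bounded uniformly near $m$, one has $H_N(m'') = H_N(m(N)) + O(N^{-2})$, so the denominator is at least $\frac{2^N}{LN^{1/2}} e^{N H_N(m(N))}(1+o(1))$. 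Combining,
$$\bar{\mathbb P}\left(\os \in A_{N,\alpha}(m(N))^c \,\big|\, \os \in A\right) \le \exp\left\{N\Big(\sup_{x \in A\setminus A_{N,\alpha}(m(N))} H_N(x) - H_N(m(N))\Big)\right\} O(N^{3/2}).$$

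Next I would locate this restricted supremum. Since $H_N - H = (\beta_N - \beta)x^p + (h_N - h)x$ is a polynomial of bounded degree with coefficients tending to $0$, $H_N \to H$ in every $C^k$-norm uniformly on $\mathrm{cl}(A)$; as $m$ is the \emph{unique} global maximizer of $H$ on the compact set $\mathrm{cl}(A)$ and $H''(m) < 0$, the argument behind Lemma \ref{mest} applies verbatim with $[-1,1]$ replaced by $\mathrm{cl}(A)$: for all $N$ large, $H_N$ is strictly concave on a fixed neighborhood of $m(N)$, and $\sup_{x \in A \setminus A_{N,\alpha}(m(N))} H_N(x)$ equals $\max\{H_N(m(N) + N^{-1/2+\alpha}),\, H_N(m(N) - N^{-1/2+\alpha})\}$. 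A Taylor expansion at $m(N)$, exactly as in \eqref{taylorH1}--\eqref{taylorH2} (using $H_N'(m(N)) = 0$, the uniform boundedness of $H_N^{(3)}$ near $m$, and $H_N''(m(N)) \to H''(m) < 0$, so that $\tfrac12 H_N''(m(N)) \le \tfrac13 H''(m)$ for $N$ large), gives
$$H_N\big(m(N) \pm N^{-1/2+\alpha}\big) - H_N(m(N)) \le \tfrac13 N^{-1+2\alpha} H''(m) + O\big(N^{-3/2+3\alpha}\big).$$
Substituting into the previous display and using that $N \cdot O(N^{-3/2 + 3\alpha}) = O(1)$ for $\alpha \in (0, \tfrac16]$ proves \eqref{eq:multi_max_local_I}.

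For \eqref{eq:multi_max_local_II} I would deduce it from \eqref{eq:multi_max_local_I} by a covering argument. By Lemma \ref{derh11}, the global maximizers of $H$ are exactly $m_1 < \cdots < m_K$ in $(-1,1)$; choosing cut points $-1 = s_0 < s_1 < \cdots < s_K = 1$ with $s_k \in (m_k, m_{k+1})$ for $1 \le k \le K-1$ and setting $B_k := [s_{k-1}, s_k]$, one checks that $m_k \in \mathrm{int}(B_k)$, that $H(m_k) > H(x)$ for all $x \in B_k \setminus \{m_k\}$ (because $m_1, \dots, m_K$ are the \emph{only} global maximizers, so $H < H(m_k)$ on each of $(m_{k-1},m_k)$ and $(m_k,m_{k+1})$), and that $\bigcup_{k=1}^K B_k = [-1,1]$. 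Since $A_{N,\alpha,K}^c \cap B_k \subseteq A_{N,\alpha}(m_k(N))^c \cap B_k$, summing over $k$ and applying \eqref{eq:multi_max_local_I} with $A = B_k$ gives
$$\bar{\mathbb P}\left(\os \in A_{N,\alpha,K}^c\right) \le \sum_{k=1}^K \bar{\mathbb P}\left(\os \in A_{N,\alpha}(m_k(N))^c \,\big|\, \os \in B_k\right) = \sum_{k=1}^K \exp\left\{\tfrac13 N^{2\alpha} H''(m_k)\right\} O(N^{3/2}),$$
and bounding each exponential by $\exp\{\tfrac13 N^{2\alpha} \max_{1\le k\le K} H''(m_k)\}$ yields \eqref{eq:multi_max_local_II}.

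The main obstacle is the one genuinely new ingredient over Lemma \ref{conc}: confirming, in the second paragraph, that inside the \emph{fixed} interval $A$ the supremum of $H_N$ off the shrinking window around $m(N)$ is still governed by the window endpoints and not by some point of $A$ far from $m(N)$. Everything hinges on the perturbation $H_N - H$ being a uniformly small polynomial, so that the local unimodality of $H$ near its maximizer on $\mathrm{cl}(A)$ is inherited by $H_N$ for $N$ large; once this is granted, the rest is a routine transcription of the $p$-regular estimate and of the elementary covering bookkeeping.
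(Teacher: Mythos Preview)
Your proof is correct and follows essentially the same approach as the paper: the paper also reduces \eqref{eq:multi_max_local_I} to the proof of Lemma \ref{conc} restricted to $A$ (invoking Lemma \ref{derh44} and Lemma \ref{mest} to locate the restricted supremum at the window endpoints), and then obtains \eqref{eq:multi_max_local_II} by partitioning $[-1,1]$ into intervals around each $m_k$ (using the midpoints $(m_{k-1}+m_k)/2$ as cut points) and summing the conditional bounds. The only cosmetic differences are your explicit single-term lower bound for the denominator and your use of arbitrary cut points $s_k$ instead of midpoints.
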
 
	
	\begin{proof}
		It follows from Lemma \ref{derh44}, that for all $N$ sufficiently large, $H_N(m(N)) > H_N(x)$ for all $x \in \mathrm{cl}(A) \setminus \{m(N)\}$, whence we can apply \cite[Lemma B.11]{cmp} to conclude that $$\sup_{x\in A\setminus A_{N,\alpha}(m(N))} H_N(x) = H_N\left(m(N) \pm N^{-\frac{1}{2}+\alpha}\right),$$ for all large $N$ such that $A_{N,\alpha}(m(N))\subset A$, as well. Following the proof of Lemma \ref{conc}, we have for all large N ,
		\begin{align}\label{coinnonun1}
			\bar{\mathbb P}\left(\os \in A_{N,\alpha}(m(N))^c\big|
			\os \in A\right) & \leq   \exp\left\{N\left(\sup_{x\in A\setminus A_{N,\alpha}(m(N))} H_N(x) - \sup_{x\in A} H_N(x)\right)\right\} O(N^{\frac{3}{2}})\nonumber\\& =  \exp\left\{N\left( H_N\left(m(N) \pm N^{-\frac{1}{2}+\alpha}\right) - H_N\left(m(N)\right)\right)\right\} O(N^{\frac{3}{2}})\nonumber\\& \leq  \exp\left\{\frac{N}{3}\left(N^{-1+2\alpha} H''(m) +  O\left(N^{-\frac{3}{2} + 3\alpha}\right) \right)\right\} O(N^{\frac{3}{2}}).
		\end{align} 
		The result \eqref{eq:multi_max_local_I} now follows from \eqref{coinnonun1}.
		
		Next, we proceed to prove \eqref{eq:multi_max_local_II}. Let $A_1 := [-1,(m_1+m_2)/2)$, $A_K := [(m_{K-1}+m_K)/2,1]$ and for $1< k< K$, $A_k := [(m_{k-1} + m_k)/2,(m_k+m_{k+1})/2)$. Then, $A_1,A_2,\ldots, A_K$ are disjoint intervals uniting to $[-1,1]$, $m_k \in \textrm{int}(A_k)$, and $H(m_k) > H(x)$ for all $x\in \mathrm{cl}(A_k) \setminus \{m_k\}$ and all $1\leq k\leq K$. Hence, by Lemma \ref{lem:multiple_max}, $$\bar{\mathbb P}\left(\os \in A_{N,\alpha}(m_k(N))^c\big|
		\os \in A_k\right) = \exp\left\{\frac{1}{3}N^{2\alpha} H''(m_k) \right\}O(N^{\frac{3}{2}})\quad\textrm{for all}~ 1\leq k\leq K.$$ Since $A_{N,\alpha}(m_k(N)) \subset A_k$ for all $1\leq k \leq K$, for all large $N$, we have $A_{N,\alpha}(m_k(N))^c \bigcap A_k = A_{N,\alpha,K}^c \bigcap A_k$ for all $1\leq k \leq K$, for all large $N$ (recall the definition of $ A_{N,\alpha,K}$ from the statement of Lemma \ref{lem:multiple_max}). Hence, $\bar{\mathbb P}\left(\os \in A_{N,\alpha}(m_k(N))^c\big|
		\os \in A_k\right) = \bar{\mathbb P}\left(\os \in A_{N,\alpha,K}^c\big|
		\os \in A_k\right)$ for all $1\leq k \leq K$, for all large $N$. Hence, for all large $N$, we have
		\begin{equation}\label{condtouncond}
			\bar{\mathbb P}\left(\os \in A_{N,\alpha,K}^c\big|
			\os \in A_k\right) = \exp\left\{\frac{1}{3}N^{2\alpha} H''(m_k) \right\}O(N^{\frac{3}{2}})\quad\textrm{for all}~ 1\leq k\leq K.
		\end{equation}
		It follows from \eqref{condtouncond} that for all large $N$,
		\begin{align}\label{lstp1}
			\bar{\p}(\os \in A_{N,\alpha,K}^c) & =  \sum_{k=1}^K \bar{\mathbb P}\left(\os \in A_{N,\alpha,K}^c\big|
			\os \in A_k\right) \bar{\p} (\os \in A_k)\nonumber\\& \leq  \exp\left\{\frac{1}{3}N^{2\alpha} \max_{1\leq k\leq K}H''(m_k) \right\}O(N^{\frac{3}{2}})\sum_{k=1}^K \bar{\p} (\os \in A_k)\nonumber\\& =  \exp\left\{\frac{1}{3}N^{2\alpha} \max_{1\leq k\leq K}H''(m_k) \right\}O(N^{\frac{3}{2}}).
		\end{align}
		The result in \eqref{eq:multi_max_local_II} now follows from \eqref{lstp1}, completing the proof of Lemma \ref{lem:multiple_max}. 
	\end{proof}
	In order to derive a conditional CLT of $\os$ around the local maximizer $m$, given that $m$ is in $A$ (where $A$ is as in Lemma \ref{lem:multiple_max} above), we need precise estimates of the \textit{restricted partition functions} defined as
	$$\bar{Z}_N\big|_A := \frac{1}{2^N}\sum_{\bs\in \sa_N: \os \in A} \exp\left\{N(\beta_N \overline{\sigma}^p_N + h_N\os)  \right\}.$$ Note that $\bar{Z}_N\big|_A$ is the partition function of the conditional measure $\bar{\mathbb P}\left(\bs \in \cdot\big|\os \in A \right)$, in the sense that for any $\bt = (\tau_1, \tau_2, \ldots, \tau_N) \in \sa_N$ such that $\bar{\bt} \in A$, we have
	$$\bar{\mathbb P}\left(\bs = \bt\big|\os \in A \right) = \frac{1}{2^{N} \bar{Z}_N\big|_{A}} \exp\left\{N(\beta_N \overline{\sigma}^p_N + h_N\os) \right\}.$$ The following lemma gives an approximation of the restricted and, hence, the unrestricted partition functions. To this end, recall that $m(N)$ is a local maximizer of $H_N$ converging to $m$.

	\begin{lem}\label{lm:condpart} Suppose  $(\beta, h) \in \Theta$ is $p$-critical. Then for $\alpha > 0$ and $N$ large enough, the restricted partition function can be expanded as
		\begin{equation}\label{eq:multi_max_I}
			\bar{Z}_N\big|_A =  \frac{e^{NH_N(m(N))}}{\sqrt{(m(N)^2-1)H_N''(m(N))}}\left(1+O\left(N^{-\frac{1}{2} +\alpha}\right)\right),
		\end{equation}
		where the set $A$ is as in Lemma \ref{lem:multiple_max}. This implies, for every $\alpha > 0$ and $N$ large enough, the (unrestricted) partition function can be expanded as 
		\begin{equation}\label{eq:multi_max_II}
			\bar{Z}_N =  \sum_{k=1}^K\frac{e^{NH_N(m_k(N))}}{\sqrt{(m_k(N)^2-1)H_N''(m_k(N))}}\left(1+O\left(N^{-\frac{1}{2} +\alpha}\right)\right). 
		\end{equation}
	\end{lem}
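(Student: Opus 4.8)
The plan is to follow, essentially line for line, the proof of Lemma \ref{ex} for the $p$-regular case, with the conditional concentration bound \eqref{eq:multi_max_local_I} of Lemma \ref{lem:multiple_max} substituting for Lemma \ref{conc}. First I would write the restricted partition function atom-wise,
\[
\bar{Z}_N\big|_A \;=\; \frac{1}{2^N}\sum_{m \in \mathcal{M}_N \cap A}\binom{N}{N(1+m)/2}\exp\left\{N(\beta_N m^p + h_N m)\right\},
\]
and note that $\bar{\mathbb P}(\,\cdot\mid \os \in A)$ is precisely the Gibbs measure whose partition function is $\bar{Z}_N\big|_A$. Since $H''(m)<0$, the bound \eqref{eq:multi_max_local_I} gives $\bar{\mathbb P}(\os \in A_{N,\alpha}(m(N)) \mid \os \in A) = 1 - O(e^{-N^\alpha})$, so that
\[
\bar{Z}_N\big|_A \;=\; \left(1 + O(e^{-N^\alpha})\right)\sum_{m \in \mathcal{M}_N \cap A_{N,\alpha}(m(N))}\zeta(m),
\]
with $\zeta$ as in \eqref{xidef}. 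The window $A_{N,\alpha}(m(N))$ from \eqref{eq:An} is the exact analogue of $A_{N,\alpha}$ from \eqref{eq:an_maximizer}, and on it $m(N)$ is, for all large $N$, the unique maximizer of $H_N$ by Lemma \ref{mest} and Lemma \ref{derh44}, with $H_N''(m(N)) \to H''(m) < 0$.

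I would then reproduce the three-step reduction in the proof of Lemma \ref{ex}: (i) replace the last sum by $\tfrac{N}{2}\int_{A_{N,\alpha}(m(N))}\zeta(x)\,\mathrm dx$ via the Riemann approximation (Lemma \ref{Riemann}) and the derivative bound on $\zeta$ (Lemma \ref{imp1}); (ii) rewrite the integrand through Stirling's formula (Lemma \ref{stir}, Lemma \ref{xiact}) as $\tfrac{N^{1/2}}{\sqrt{2}}\,(1 + O(N^{-1}))\,e^{NH_N(x)}\,(\pi(1-x^2))^{-1/2}$; and (iii) apply the local Laplace estimate (Lemma \ref{Laplace}) centred at $m(N)$. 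All three lemmas are local --- they use $H_N$ and $\zeta$ only on a vanishing neighbourhood of $m(N)$ --- so the fact that $m(N)$ is merely a local, rather than the global, maximizer of $H_N$ plays no role, and the computation reproduces \eqref{fin1} verbatim with $m_*(N)$ replaced by $m(N)$. This yields $\sum_{m \in \mathcal{M}_N \cap A_{N,\alpha}(m(N))}\zeta(m) = e^{NH_N(m(N))}\big((m(N)^2-1)H_N''(m(N))\big)^{-1/2}(1 + O(N^{-1/2+3\alpha}))$; combining with the previous display and re-parametrizing $3\alpha$ as $\alpha$ (legitimate since $\alpha>0$ is arbitrary) gives \eqref{eq:multi_max_I}.

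For \eqref{eq:multi_max_II}, I would take the partition $[-1,1] = A_1 \cup \cdots \cup A_K$ built in the proof of Lemma \ref{lem:multiple_max}, where $m_k \in \mathrm{int}(A_k)$ and $H(m_k) > H(x)$ for all $x \in \mathrm{cl}(A_k)\setminus\{m_k\}$. Since each $\bs \in \sa_N$ has $\os$ in exactly one $A_k$, we get $\bar{Z}_N = \sum_{k=1}^K \bar{Z}_N\big|_{A_k}$, and \eqref{eq:multi_max_I} applied to every summand yields \eqref{eq:multi_max_II}.

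I do not expect a genuine obstacle here: the whole content is checking that the Riemann-sum, Stirling, and Laplace lemmas --- originally invoked around a global maximizer --- transfer unchanged to the restricted window, which is immediate because their proofs are purely local. The one point worth a second look is that $H_N''(m(N))$ must stay bounded away from $0$ uniformly in $N$ so that the square roots are well defined and the error bookkeeping closes; this follows from $H_N''(m(N)) \to H''(m) < 0$, a consequence of Lemma \ref{derh44} together with the non-degeneracy of the maximizers at $p$-critical points established in Lemmas \ref{derh11} and \ref{derh22}.
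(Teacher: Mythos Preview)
Your proposal is correct and follows essentially the same route as the paper's own proof: use the conditional concentration \eqref{eq:multi_max_local_I} to localize $\bar Z_N|_A$ to the window $A_{N,\alpha}(m(N))$, then replay the Riemann--Stirling--Laplace computation of Lemma~\ref{ex} verbatim around the local maximizer $m(N)$, and finally sum over the partition $A_1,\ldots,A_K$ to get \eqref{eq:multi_max_II}. The paper compresses the middle step to ``mimicking the proof of Lemma~\ref{ex},'' while you spell out the three ingredients and the locality observation, but the content is identical.
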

	
	\begin{proof}
		The arguments below are meant for all sufficiently large $N$. Without loss of generality, let $\alpha \in \left(0,\frac{1}{6}\right]$ and note that
		\begin{align}\label{exp1}
			& \bar{\p}\left(\os \in A_{N,\alpha}(m(N))\Big|\os \in A\right)\nonumber\\ & =  \bar{Z}_N\big|_A^{-1} \sum_{m \in \mathcal{M}_N \bigcap A_{N,\alpha}(m(N))} \binom{N}{N(1+m)/2} \exp\left\{N(\beta_N m^p + h_N m - \log 2) \right\}.
		\end{align}
		By Lemma \ref{lem:multiple_max}, $\bar{\p}\left(\os \in A_{N,\alpha}(m(N))\Big|\os \in A\right) = 1-O(e^{-N^\alpha})$ and hence \eqref{exp1} gives us
		\begin{equation}\label{secc}
			\bar{Z}_N\big|_A = \left(1+O(e^{-N^\alpha})\right) \sum_{m \in \mathcal{M}_N \bigcap A_{N,\alpha}(m(N))} \binom{N}{N(1+m)/2} \exp\left\{N(\beta_N m^p + h_N m - \log 2) \right\}.
		\end{equation}
		Since $m(N)$ is the unique global maximizer of $H_N$ over the interval $A_{N,\alpha}(m(N))$,  by mimicking the proof of Lemma \ref{ex} on the interval $A_{N,\alpha}(m(N))$, it follows that 
		\begin{align}\label{nonunmim1}
			&\sum_{m \in \mathcal{M}_N\bigcap A_{N,\alpha}(m(N))} \binom{N}{N(1+m)/2} \exp\left\{N(\beta_N m^p + h_Nm-\log 2) \right\}\nonumber\\&= \frac{e^{N H_N(m(N))}}{\sqrt{(m(N)^2-1)H_N''(m(N))}}\left(1+O\left(N^{- \frac{1}{2}+3\alpha}\right)\right).
		\end{align} 
		The result in \eqref{eq:multi_max_I} now follows from \eqref{secc} and \eqref{nonunmim1}. 
		
		For each $1\leq k\leq K$, (\ref{eq:multi_max_I}) immediately gives us 
		\begin{equation}\label{tobeadded}
			\bar{Z}_N\big|_{A_k} =  \frac{e^{NH_N(m_k(N))}}{\sqrt{(m_k(N)^2-1)H_N''(m_k(N))}}\left(1+O\left(N^{-\frac{1}{2} +\alpha}\right)\right),
		\end{equation}
		where the sets $A_1, \ldots,A_K$ are as defined in the proof of \eqref{eq:multi_max_local_II}. The result in \eqref{eq:multi_max_II} now follows from \eqref{tobeadded} on observing that $\bar{Z}_N = \sum_{k=1}^K \bar{Z}_N\big|_{A_k}$. 
	\end{proof}

	Lemmas \ref{lem:multiple_max} and \ref{lm:condpart} can now be used to complete the proof of Theorem \ref{cltun} (2). \\

	\noindent\textbf{\textit{Completing the Proof of Theorem $\ref{cltun}$ when $(\beta, h)$ is $p$-critical}}: For each $\varepsilon > 0$ and $1\leq s \leq K$, define $B_{s, \varepsilon} = (m_s -\varepsilon, m_s+\varepsilon)$. Then for all $\varepsilon>0$ small enough, $H(m_s) > H(x)$, for all $x \in B_{s,\varepsilon}\setminus \{m_s\}$. Now, for each $1\leq s \leq K$, we have
	\begin{equation}\label{smallprob}
		\p_{\beta,h,p} (\os \in B_{s,\varepsilon}) = \frac{Z_N(\beta,h,p)\big|_{B_{s,\varepsilon}}}{Z_N(\beta,h,p)}.
	\end{equation}
	By Lemma \ref{lm:condpart} we have 
	\begin{equation}\label{smallprob1}
		Z_N(\beta,h,p)\big|_{B_{s,\varepsilon}} =  \frac{e^{N\sup_{x\in[-1,1]} H(x)}}{\sqrt{(m_s^2-1)H''(m_s)}}\left(1+o(1)\right)\quad\textrm{for all} ~1\leq s \leq K, 
	\end{equation} 
	and 
	\begin{equation}\label{smallprob2}
		Z_N(\beta,h,p) = e^{N\sup_{x\in[-1,1]} H(x)} \sum_{s=1}^K \frac{1}{\sqrt{(m_s^2-1)H''(m_s)}}\left(1+o(1)\right).
	\end{equation}
	The result in \eqref{eq:cltun_p1} now follows from \eqref{smallprob}, \eqref{smallprob1} and \eqref{smallprob2}.
	
	Now, we proceed we prove \eqref{eq:cltun_II}. Hereafter, let $\beta_N := \beta+ N^{-\frac{1}{2}} \bar{\beta}$ and $h_N := h + N^{-\frac{1}{2}} \bar{h}$. A direct calculation reveals that 
	\begin{equation}\label{condexp}
		\e_{\beta_N,h_N,p}  \left[e^{tN^{\frac{1}{2}}(\os-m)}\Big| \os \in A  \right] =  e^{-tN^{\frac{1}{2}}m} \frac{Z_N(\beta_N,h_N + N^{-\frac{1}{2}} t,p)\big|_A}{Z_N(\beta_N,h_N,p)\big|_A}.
	\end{equation}
	Using Lemma \ref{lm:condpart}, the right side of \eqref{condexp} simplifies to
	\begin{equation*}
		(1+o(1)) e^{-tN^{\frac{1}{2}}m +  N\left\{H_{\beta_N, h_N+N^{-\frac{1}{2}}t,p} \left(m\left(\beta_N, h_N+N^{-\frac{1}{2}}t,p\right) \right) - H_{\beta_N, h_N,p} \left(m\left(\beta_N, h_N,p\right) \right)  \right\}},
	\end{equation*}
	where $m(\beta_N, h_N,p )$ and $m(\beta_N, h_N+N^{-\frac{1}{2}}t,p)$ are the local maximizers of the functions $H_{\beta_N, h_N,p}$ and $H_{\beta_N, h_N+N^{-\frac{1}{2}}t,p}$ respectively, converging to $m$. We can mimic the proof of Theorem \ref{cltun} verbatim from this point onward, to conclude that as $N \rightarrow \infty$,   
	\begin{equation}\label{moment generating functionnonun}
		\e_{\beta_N,h_N,p}  \left[e^{tN^{\frac{1}{2}}(\os-m)}\Big| \os \in A  \right] \rightarrow 	\exp\left\{ -\frac{t(\bar{h}+\bar{\beta}pm^{p-1})}{H''(m)} -\frac{t^2}{2H''(m)}\right\}.
	\end{equation}
	The result in \eqref{eq:cltun_II} now follows from \eqref{moment generating functionnonun}. \qed

	\section{Perturbative Concentration Lemmas when $(\beta, h)$ is $p$-critical}
	\label{sec:pf_concentration}
	
	It was shown in \eqref{eq:cltun_p1} that for $(\beta,h) \in \Theta$ which is $p$-critical, the limiting distribution of $\os$ assigns positive mass to each of the global maximizers $m_1,m_2,\ldots,m_K$. However, to use this result to obtain the limiting distribution of the ML estimates, we need to derive a similar concentration for $\os$ under $\p_{\beta_N,h_N,p}$. In particular, is it the case that $\os$ assigns positive mass to each of $m_1,m_2,\ldots,m_K$, or is the asymptotic support of $\os$ in this case a proper subset of $\{m_1,m_2,\ldots,m_K\}$ (we already know from \eqref{eq:multi_max_local_II} that the asymptotic support of $\os$ is a subset of  $\{m_1,m_2,\ldots,m_K\}$)? The answer to this question depends upon the rate of convergence of $(\beta_N,h_N)$ to $(\beta,h)$. This section is devoted to deriving these concentration results, which will be essential in proving the asymptotic distributions of $\hat{\beta}_N$ and $\hat{h}_N$ at the critical points.
	
	In what follows, assume  $(\beta,h) \in \Theta$ which is $p$-critical and let $m_1<m_2<\ldots<m_K$ be the global maximizers of $H_{\beta,h,p}$, and let $A_1,A_2,\ldots,A_K$ be the sets defined in the proof of \eqref{eq:multi_max_local_II}.  The following lemma shows that keeping $\beta$ fixed, if $h$ is perturbed at a rate slower than $1/N$, then under the perturbed sequence of measures, $\os$ concentrates around the largest/smallest global maximizer according as the perturbation is in the positive/negative direction, respectively.

	\begin{lem}\label{redunsup}
		For any positive sequence $y_N$ satisfying $N^{-1} \ll  y_N \ll 1$, there exist positive constants $C_1$ and $C_2$ not depending on $N$, such that
		$$\p_{\beta,h+\bar{h}y_N,p}\left(\os \in A_{\bm{1}(\bar{h}<0) + K \bm{1}(\bar{h}>0)}^c\right) \leq C_1e^{-C_2 Ny_N}.$$
	\end{lem}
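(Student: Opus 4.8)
The plan is to follow the same Laplace-approximation strategy used in the proof of Lemma \ref{conc} and Lemma \ref{lm:condpart}, but now tracking how the perturbation $\bar h y_N$ tilts the function $H$ and breaks the degeneracy between the global maximizers $m_1,\ldots,m_K$. Write $H_N := H_{\beta,h+\bar h y_N,p}$, so that $H_N(x) = H_{\beta,h,p}(x) + \bar h y_N x$. Since $H_{\beta,h,p}(m_1)=\cdots=H_{\beta,h,p}(m_K)=:H^*$ (all $m_k$ are global maximizers) while $m_1<\cdots<m_K$, we get, say for $\bar h>0$,
\begin{equation*}
H_N(m_K) - H_N(m_k) = \bar h y_N (m_K - m_k) \geq \bar h y_N (m_K - m_{K-1}) =: c\, y_N > 0
\end{equation*}
for every $k<K$, where $c>0$ is a constant depending only on $(\beta,h,p,\bar h)$. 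By Lemma \ref{derh44}, for $N$ large the perturbed function $H_N$ has local maximizers $m_k(N)\to m_k$, and on each of the intervals $A_1,\ldots,A_K$ (from the proof of \eqref{eq:multi_max_local_II}) the unique global maximizer of $H_N$ restricted to $A_k$ is $m_k(N)$; moreover $|H_N(m_k(N)) - H_N(m_k)| = O(y_N^2) = o(y_N)$ by a Taylor expansion around $m_k$ (the first derivative of $H_{\beta,h,p}$ vanishes at $m_k$), so the gap estimate above persists at the level of the true local maxima of $H_N$: $\sup_{x\in A_K} H_N(x) - \sup_{x\in A_k} H_N(x) \geq \tfrac{c}{2} y_N$ for all $k<K$ and all large $N$.

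Next I would invoke the restricted-partition-function expansion of Lemma \ref{lm:condpart} (which holds for any sequences $\beta_N\to\beta$, $h_N\to h$, hence in particular for $h_N = h+\bar h y_N$ since $y_N\ll 1$): for each $k$,
\begin{equation*}
Z_N(\beta, h+\bar h y_N, p)\big|_{A_k} = \frac{e^{N H_N(m_k(N))}}{\sqrt{(m_k(N)^2-1)H_N''(m_k(N))}}\,(1+o(1)).
\end{equation*}
Since $\p_{\beta,h+\bar h y_N,p}(\os\in A_k) = Z_N|_{A_k}/Z_N = Z_N|_{A_k}/\sum_{j=1}^K Z_N|_{A_j}$, and the prefactors $\sqrt{(m_k(N)^2-1)H_N''(m_k(N))}$ are all bounded away from $0$ and $\infty$ (using $H_{\beta,h,p}''(m_k)<0$ from Lemma \ref{derh11}/Lemma \ref{derh22} and continuity), the ratio is controlled entirely by the exponential factors. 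For $\bar h>0$ this gives
\begin{equation*}
\p_{\beta,h+\bar h y_N,p}(\os\in A_k) \leq C\, e^{N(H_N(m_k(N)) - H_N(m_K(N)))} \leq C\, e^{-\tfrac{c}{2} N y_N}
\end{equation*}
for every $k<K$, and summing over $k<K$ yields $\p_{\beta,h+\bar h y_N,p}(\os\in A_K^c)\leq C_1 e^{-C_2 N y_N}$. The case $\bar h<0$ is symmetric, with $m_1$ playing the role of $m_K$ and the sign of every gap reversed, giving concentration on $A_1$. This establishes the statement with $A_{\mathbf 1(\bar h<0) + K\mathbf 1(\bar h>0)}$.

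The main obstacle is making sure the error bounds in the partition-function expansion of Lemma \ref{lm:condpart} (the $1+O(N^{-1/2+\alpha})$ and $o(1)$ terms) do not swamp the exponential gap $e^{-\tfrac{c}{2}Ny_N}$; this is exactly why the hypothesis $y_N \gg N^{-1}$ is needed — it guarantees $Ny_N\to\infty$ so that $e^{-\tfrac{c}{2}Ny_N}$ decays, while $y_N\ll 1$ guarantees $m_k(N)\to m_k$ and keeps the Taylor remainders $O(y_N^2)=o(y_N)$. A secondary point requiring care is the uniformity of the $o(1)$ in Lemma \ref{lm:condpart} along the specific sequence $h_N = h+\bar h y_N$ rather than a fixed point; but since that lemma is stated for arbitrary convergent sequences $(\beta_N,h_N)$, this is immediate, and I would simply cite it. Everything else is a routine bookkeeping of constants.
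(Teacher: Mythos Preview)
Your proposal is correct and follows essentially the same route as the paper: both bound $\p_{\beta,h+\bar h y_N,p}(\os\in A_s)$ by the ratio of restricted partition functions from Lemma \ref{lm:condpart}, then use the Taylor expansion $H_N(m_k(N)) = H(m_k) + \bar h y_N m_k + O(y_N^2)$ (which the paper packages as Lemma \ref{bdhbest}) to extract the exponential gap $e^{Ny_N[\bar h(m_s-m_t)+o(1)]}$. The only cosmetic difference is that the paper bounds $Z_N|_{A_s}/Z_N \le Z_N|_{A_s}/Z_N|_{A_t}$ directly rather than writing $Z_N=\sum_j Z_N|_{A_j}$, but the substance is identical.
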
 
	
	\begin{proof} Let $H_N:= H_{\beta,h+\bar{h}y_N,p}$ and $m_k(N)$ be the local maximizers of $H_N$ converging to $m_k$. In what follows, for two positive sequences $\phi_N$ and $\psi_N$, we will use the notation $\phi_N \lesssim \psi_N$ to denote that $\phi_N \leq C \psi_N$ for all $N$ and some constant $C$ not depending on $N$. Let $t := \bm{1}(\bar{h}<0) + K \bm{1}(\bar{h}>0)$. Then for any $s \neq t$, we have by Lemma \ref{lm:condpart} and Lemma \ref{bdhbest},
		\begin{align}\label{compact}
			\p_{\beta,h+\bar{h}y_N,p}(\os \in A_s) & =  \frac{Z_N(\beta,h+\bar{h}y_N,p)\big|_{A_s}}{Z_N(\beta,h+\bar{h}y_N,p)}\nonumber\\& \leq   \frac{Z_N(\beta,h+\bar{h}y_N,p)\big|_{A_s}}{Z_N(\beta,h+\bar{h}y_N,p)\big|_{A_t}}\nonumber \\&\lesssim  \sqrt{\frac{(m_t(N)^2-1)H_N''(m_t(N))}{(m_s(N)^2-1)H_N''(m_s(N))}} e^{N\left[H_N(m_s(N)) - H_N(m_t(N)) \right]}\nonumber\\&\lesssim  e^{N\left[\bar{h}y_N(m_s-m_t) + O(y_N^2)\right]} = e^{Ny_N\left[\bar{h}(m_s-m_t) + o(1)\right]}.
		\end{align} 
		Lemma \ref{redunsup} now follows from \eqref{compact}, since $\bar{h}(m_s-m_t) <0$ for every $s \neq t$, by definition. 
	\end{proof}

	The situation becomes a bit trickier when $h$ is fixed and $\beta$ is perturbed, as two cases arise depending upon the parity of $p$. The case $p \geq 3$ is odd, is the easier one, and is exactly similar to the previous setting. Note that in this case, $K = 2$.
	\begin{lem}\label{redunsup2}
		Suppose that $p\geq 3$ is odd. Then, for any positive sequence $x_N$ satisfying $N^{-1} \ll  x_N \ll 1$, there exist positive constants $C_1$ and $C_2$ not depending on $N$, such that
		$$\p_{\beta+\bar{\beta}x_N,h,p}\left(\os \in A_{\bm{1}\{\bar{\beta}<0\} + 2 \cdot \bm{1}\{\bar{\beta}>0\}}^c\right) \leq C_1e^{-C_2 Nx_N}.$$
	\end{lem}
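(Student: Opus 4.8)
\textbf{Proof proposal for Lemma \ref{redunsup2}.}

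The plan is to mimic the proof of Lemma \ref{redunsup}, exploiting the fact that for $p \geq 3$ odd and a $p$-critical point $(\beta, h)$, the function $H = H_{\beta, h, p}$ has exactly $K = 2$ global maximizers $m_1 < m_2$, and crucially that $m_1^p < m_2^p$ (since $x \mapsto x^p$ is strictly increasing for odd $p$). First I would set $H_N := H_{\beta + \bar{\beta} x_N, h, p}$ and let $m_1(N), m_2(N)$ be the local maximizers of $H_N$ converging to $m_1, m_2$ respectively (these exist by Lemma \ref{derh44}). Writing $t := \bm{1}\{\bar{\beta} < 0\} + 2 \cdot \bm{1}\{\bar{\beta} > 0\}$ and letting $s \neq t$ be the other index, I would express
\begin{align*}
\p_{\beta+\bar{\beta}x_N,h,p}(\os \in A_s) = \frac{Z_N(\beta+\bar{\beta}x_N,h,p)\big|_{A_s}}{Z_N(\beta+\bar{\beta}x_N,h,p)} \leq \frac{Z_N(\beta+\bar{\beta}x_N,h,p)\big|_{A_s}}{Z_N(\beta+\bar{\beta}x_N,h,p)\big|_{A_t}},
\end{align*}
and then apply the restricted partition function expansion \eqref{eq:multi_max_I} from Lemma \ref{lm:condpart} to both numerator and denominator. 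This gives, up to a bounded ratio of the prefactors $\sqrt{(m_\bullet(N)^2 - 1) H_N''(m_\bullet(N))}$ (which is controlled by a lemma analogous to Lemma \ref{bdhbest} for the $\beta$-perturbation — I would invoke or state the obvious analogue), a bound of order $\exp\{N[H_N(m_s(N)) - H_N(m_t(N))]\}$.

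The key computation is then to Taylor expand $H_N(m_s(N)) - H_N(m_t(N))$ in $x_N$ around $x_N = 0$. Since $H_N(x) = H(x) + \bar{\beta} x_N x^p$, and $H(m_s) = H(m_t)$ (both are global maximizers of $H$), while the local maximizers move by $O(x_N)$ and $H'$ vanishes at the maximizers of $H$ (so the first-order contribution from the shift in the maximizer location is second order), we get
\begin{align*}
H_N(m_s(N)) - H_N(m_t(N)) = \bar{\beta} x_N (m_s^p - m_t^p) + O(x_N^2) = x_N [\bar{\beta}(m_s^p - m_t^p) + o(1)].
\end{align*}
By the definition of $t$ (largest maximizer when $\bar{\beta} > 0$, smallest when $\bar{\beta} < 0$) and the strict monotonicity of $x \mapsto x^p$ for odd $p$, we have $\bar{\beta}(m_s^p - m_t^p) < 0$ strictly. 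Hence the exponent is bounded above by $-C_2 N x_N$ for some $C_2 > 0$ and all large $N$, which combined with the bounded prefactor ratio yields $\p_{\beta+\bar{\beta}x_N,h,p}(\os \in A_s) \leq C_1 e^{-C_2 N x_N}$. Since $A_{t}^c \cap [-1,1]$ is contained in $A_s$ (as $A_1 \sqcup A_2 = [-1,1]$), this is exactly the claimed bound.

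I do not anticipate a serious obstacle here — the main point is really just the sign analysis $\bar{\beta}(m_s^p - m_t^p) < 0$, which is where oddness of $p$ enters (for even $p$ one could have $m_1^p = m_2^p$ when the maximizers are symmetric, or the wrong ordering, which is precisely why the even case is handled separately in a later lemma). The only mild care needed is to confirm that the prefactor ratio $\sqrt{(m_t(N)^2-1)H_N''(m_t(N)) / [(m_s(N)^2-1)H_N''(m_s(N))]}$ stays bounded, which follows from the facts that $H_N'' \to H''$ uniformly on compacts, $H''(m_i) < 0$ for $i = 1, 2$ at a critical point (Lemma \ref{derh11}), and $m_i(N) \to m_i \in (-1,1)$; I would cite the relevant estimate (the analogue of Lemma \ref{bdhbest}) rather than redo it. The argument is short enough that, as the authors note for the even case, one could reasonably say the proof is "so similar" to Lemma \ref{redunsup} that the details are routine.
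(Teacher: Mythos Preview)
Your proposal is correct and follows essentially the same approach as the paper's proof, which simply says ``by exactly following the proof of Lemma \ref{redunsup}'' one obtains $\p_{\beta+\bar{\beta}x_N,h,p}(\os \in A_s) \lesssim e^{Nx_N[\bar{\beta}(m_s^p-m_t^p) + o(1)]}$ and then concludes from $\bar{\beta}(m_s^p-m_t^p) < 0$. One small remark: Lemma \ref{bdhbest} already covers simultaneous perturbations in both $\beta$ and $h$, so you can invoke it directly (with $y_N = 0$) rather than stating an analogue.
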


	\begin{proof} 
		Let $H_N:= H_{\beta+\bar{\beta}x_N,h,p}$ and $m_k(N)$ be the local maximizers of $H_N$ converging to $m_k$. Then for any $s \neq t := \bm{1}\{\bar{\beta}<0\} + 2 \cdot \bm{1}\{\bar{\beta}>0\}$, by exactly following the proof of Lemma \ref{redunsup}, one gets
		\begin{equation}\label{compact2}
			\p_{\beta+\bar{\beta}x_N,h,p}(\os \in A_s) \lesssim e^{Nx_N\left[\bar{\beta}(m_s^p-m_t^p) + o(1)\right]}.
		\end{equation}
		Lemma \ref{redunsup2} now follows from \eqref{compact2}, since $\bar{\beta}(m_s^p-m_t^p) <0$ for every $s \neq t$, by definition.  
	\end{proof}

	In the following lemma, we deal with the case $p\geq 4$ even. The result is presented in two cases, depending upon whether $h=0$ or not. Note that, if $h \neq 0$, then $K=2$. On the other hand, if $h = 0$, then we may assume that $\beta \geq \tilde{\beta}_p$, since otherwise, $(\beta,h)$ is $p$-regular.  In this case, $K=2$ if $\beta > \tilde{\beta}_p$ and $K=3$ if $\beta = \tilde{\beta}_p$.

	\begin{lem}\label{redunsup3}
		The following hold when $p \geq 4$ is even.
		\begin{enumerate}
			\item[$(1)$]  Suppose that $h \neq 0$. Then, for any positive sequence $x_N$ satisfying $N^{-1} \ll  x_N \ll 1$, there exist positive constants $C_1$ and $C_2$ not depending on $N$, such that the following hold.
			\begin{itemize}
				\item[$\bullet$] If $h>0$, then
				$$\p_{\beta+\bar{\beta}x_N,h,p}\left(\os \in A_{\bm{1}\{\bar{\beta}<0\} + 2 \cdot \bm{1}\{\bar{\beta}>0\}}^c\right) \leq C_1e^{-C_2 Nx_N}. $$
				\item[$\bullet$] If $h < 0$, then
				$$\p_{\beta+\bar{\beta}x_N,h,p}\left(\os \in A_{\bm{1}\{\bar{\beta}>0\} + 2 \cdot \bm{1}\{\bar{\beta}<0\}}^c\right) \leq C_1e^{-C_2 Nx_N}. $$
			\end{itemize}
			\item[$(2)$] Suppose that $h = 0$. 
			\begin{itemize}
				\item[$\bullet$] If $\beta > \tilde{\beta}_p$, then for any sequence $(\beta_N,h_N) \rightarrow (\beta,h)$, there exists a positive constant $C$ not depending on $N$, such that
				\begin{equation}\label{cc1}
					\max\left\{\left|\p_{\beta_N,h_N,p}\left(\os \in A_1\right) -\frac{1}{2}\right| ,~ \left|\p_{\beta_N,h_N,p}\left(\os \in A_2\right) -\frac{1}{2}\right|\right\} \leq Ce^{-N^{\frac{1}{6}}}.
				\end{equation}
				\item[$\bullet$] If $\beta = \tilde{\beta}_p$ and $\bar{\beta} > 0$, then for any positive sequence $x_N$ satisfying $N^{-1} \ll  x_N \ll 1$, there exist positive constants $C_1$ and $C_2$ not depending on $N$, such that
				\begin{equation}\label{cc2}
					\max\left\{\left|\p_{\beta_N,h,p}\left(\os \in A_1\right)-\frac{1}{2}\right|,~\left|\p_{\beta_N, h,p}\left(\os \in A_3\right) - \frac{1}{2}\right|\right\} \leq C_1e^{-C_2 Nx_N},
				\end{equation}
				where $\beta_N=\beta + \bar{\beta} x_N$. 
				\item[$\bullet$] If $\beta = \tilde{\beta}_p$ and $\bar{\beta} < 0$, then for any positive sequence $x_N$ satisfying $N^{-1} \ll  x_N \ll 1$, there exist a positive constants $C_1$ and $C_2$ not depending on $N$, such that
				\begin{equation}\label{cc3}
					\p_{\beta+\bar{\beta}x_N,h,p}\left(\os \in A_2^c\right) \leq C_1 e^{-C_2 N x_N}.
				\end{equation}
			\end{itemize}
		\end{enumerate}
		
	\end{lem}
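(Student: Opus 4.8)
\emph{Strategy.} The plan is to run, in each sub-case, the template used for Lemmas \ref{redunsup} and \ref{redunsup2}. Given a sequence $(\beta_N,h_N)\to(\beta,h)$, set $H_N:=H_{\beta_N,h_N,p}$ and let $m_1(N)<\dots<m_K(N)$ be the local maximizers of $H_N$ converging to $m_1<\dots<m_K$, which exist for large $N$ by Lemma \ref{derh44}. With the cells $A_1,\dots,A_K$ from the proof of \eqref{eq:multi_max_local_II}, Lemma \ref{lm:condpart} gives, for $s\ne t$,
\[
\p_{\beta_N,h_N,p}(\os\in A_s)\le\frac{\bar{Z}_N|_{A_s}}{\bar{Z}_N|_{A_t}}=O\!\left(e^{N[H_N(m_s(N))-H_N(m_t(N))]}\right),
\]
so everything reduces to comparing exponents. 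Since each $m_k$ is a global maximizer of $H_{\beta,h,p}$ we have $H_{\beta,h,p}'(m_k)=0$, hence the perturbed maximizer moves only by $O(|\beta_N-\beta|+|h_N-h|)$ and, exactly as in Lemma \ref{redunsup2} (using Lemma \ref{bdhbest}),
\[
H_N(m_s(N))-H_N(m_t(N))=(\beta_N-\beta)(m_s^p-m_t^p)+(h_N-h)(m_s-m_t)+O\!\left((|\beta_N-\beta|+|h_N-h|)^2\right).
\]

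For part $(1)$ I would take $h_N\equiv h$ and $\beta_N=\beta+\bar\beta x_N$, so the exponent equals $Nx_N[\bar\beta(m_s^p-m_t^p)+o(1)]$. Recalling from Section \ref{sec:mle_beta} that for even $p$ one has $0<m_1<m_2$ when $h>0$ and $m_1<m_2<0$ when $h<0$, so that $m_1^p<m_2^p$ in the first case and $m_1^p>m_2^p$ in the second, I choose $t$ to be the index maximizing $\bar\beta m^p$; this gives $t=\ind\{\bar\beta<0\}+2\,\ind\{\bar\beta>0\}$ when $h>0$ and $t=\ind\{\bar\beta>0\}+2\,\ind\{\bar\beta<0\}$ when $h<0$, while for the remaining index $s$ the coefficient $\bar\beta(m_s^p-m_t^p)$ is strictly negative, producing the claimed bound with $C_2=\tfrac12|\bar\beta|\,|m_1^p-m_2^p|$.

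For part $(2)$ I will always use $h_N\equiv 0$, so that $H_N=H_{\beta_N,0,p}$ is even and the global spin-flip $\bs\mapsto-\bs$ is a Hamiltonian-preserving bijection of $\sa_N$. When $\beta=\tilde{\beta}_p$, the maximizers of $H_{\tilde{\beta}_p,0,p}$ are $m_1=-m_*$, $m_2=0$, $m_3=m_*$ with $m_*>0$, and with $\beta_N=\beta+\bar\beta x_N$ the exponent comparing $A_i$ ($i\in\{1,3\}$) to $A_2$ is $Nx_N[\bar\beta m_*^p+o(1)]$. If $\bar\beta<0$ this tends to $-\infty$, so $\os$ concentrates in $A_2$, which is \eqref{cc3}; if $\bar\beta>0$ then $A_2$ is the subdominant cell while $\bar{Z}_N|_{A_1}=\bar{Z}_N|_{A_3}$ exactly by symmetry ($A_1$ and $A_3$ are reflections and $A_2$ is reflection-invariant), whence
\[
\p_{\beta_N,0,p}(\os\in A_1)=\frac{\bar{Z}_N|_{A_1}}{2\bar{Z}_N|_{A_1}+\bar{Z}_N|_{A_2}}=\frac12+O\!\left(e^{-C_2 N x_N}\right),
\]
and the same for $A_3$, which is \eqref{cc2}. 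When $\beta>\tilde{\beta}_p$ the maximizers are $\pm m_*$, the cells $A_1=[-1,0)$ and $A_2=[0,1]$ are swapped by $\bs\mapsto-\bs$ up to the single value $\os=0$, whose probability is $e^{-\Omega(N)}$ since $0$ is bounded away from $\pm m_*$ and $H_{\beta,0,p}(0)<H_{\beta,0,p}(\pm m_*)$; hence $\bar{Z}_N|_{A_1}$ and $\bar{Z}_N|_{A_2}$ coincide up to a factor $1+e^{-\Omega(N)}$, which gives \eqref{cc1} with room to spare relative to the stated $e^{-N^{1/6}}$.

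The hard part will be extracting the \emph{sharp} exponential rates rather than mere consistency: one must check that the polynomial error $O(N^{-1/2+\alpha})$ in Lemma \ref{lm:condpart} and the quadratic error in the exponent comparison are both swamped by the exponential gap $Nx_N$ (which is exactly why the hypothesis $x_N\gg N^{-1}$ is imposed), and — most delicately, in the balanced cases $\beta\ge\tilde{\beta}_p$ with $h_N\equiv 0$ — that one should invoke the \emph{exact} finite-$N$ symmetry of the model rather than its asymptotic expansion, so that the polynomial prefactors cancel identically and the limiting weights come out precisely $\tfrac12$.
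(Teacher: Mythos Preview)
Your approach is essentially the paper's own. For part~(1) and for \eqref{cc2}, \eqref{cc3} you run exactly the template of Lemmas~\ref{redunsup}--\ref{redunsup2}: compare restricted partition functions via Lemma~\ref{lm:condpart}, expand the exponent with Lemma~\ref{bdhbest}, and read off the sign of $\bar\beta(m_s^p-m_t^p)$; the paper does the same (it literally says part~(1) ``is exactly similar to that of Lemma~\ref{redunsup2}'' and derives \eqref{cc2}, \eqref{cc3} from the analogue of \eqref{compact2} plus the spin-flip symmetry).

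One point to flag on \eqref{cc1}. You announce ``for part~(2) I will always use $h_N\equiv 0$'', but the \emph{statement} of \eqref{cc1} is for an arbitrary sequence $(\beta_N,h_N)\to(\beta,0)$. Your symmetry argument (and the paper's, which invokes $\os\stackrel{D}{=}-\os$ under $\p_{\beta_N,h_N,p}$) genuinely needs $h_N=0$: if $h_N\ne 0$ the exponent gap is $N[H_N(m_1(N))-H_N(m_2(N))]=-2Nh_N m_*+o(N|h_N|)$, which can diverge and force the mass onto a single cell. So as written neither argument covers the full generality of \eqref{cc1}; fortunately only the case $h_N\equiv 0$ is ever used downstream (in the proof of \eqref{eq:bmle_multimax_5}), and your proof handles that case correctly and by the same mechanism as the paper.
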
 
	
	\begin{proof}
		The proof of (1) is exactly similar to that of Lemma \ref{redunsup2}, and hence we ignore it. One only needs to observe that $m_1<m_2<0$ if $h <0$, and $0<m_1<m_2$ if $h> 0$. Hence, $m_1^p < m_2^p$ if $h >0$, and $m_1^p > m_2^p$ if $h<0$.

		Next, we prove (2). Note that \eqref{cc1} follows directly from \eqref{eq:multi_max_local_II} in Lemma \ref{lem:multiple_max} (taking $\alpha = \frac{1}{6}$) and using the fact that for even $p$ and $h=0$, $\os \stackrel{D}{=} -\os$. Next, note that if $\beta = \tilde{\beta}_p$ and $\bar{\beta} > 0$, then from \eqref{compact2},
		$\p_{\beta + \bar{\beta} x_N,h,p}\left(\os \in A_2\right) \lesssim e^{-C_2 Nx_N}$ for some positive constant $C_2$ not depending on $N$, and \eqref{cc2} follows from the symmetry of the distribution of $\os$. Finally, if $\beta = \tilde{\beta}_p$ and $\bar{\beta} < 0$, then once again from \eqref{compact2}, $\p_{\beta + \bar{\beta} x_N,h,p}\left(\os \in A_k\right) \lesssim e^{-C_2 N x_N}$ for some positive constant $C_2$ not depending on $N$ and $k \in \{1,3\}$. This gives \eqref{cc3} and completes the proof of Lemma \ref{redunsup3}. 
	\end{proof}

	\section{Monotonicity of the Likelihood and Existence of ML Estimates}
\label{sec:increasingML}

In this section we collect some results about the monotonicity of the likelihood equations and existence of the ML estimates. To this end, define $u_{N,p}$ and $u_{N,1}$ to be the functions appearing in the LHS of appearing in the equations \eqref{eqmle} and \eqref{eqmleh}, respectively, that is, 
\begin{equation}\label{eq:ubetah}
u_{N,p}(\beta, h, p) := \e_{\beta,h,p} \left(\overline{\sigma}^p_N \right)  \text{ and }  u_{N, 1}(\beta, h, p) := \e_{\beta,h,p} \left(\os \right)  . 
\end{equation}

	\begin{lem}\label{increasing}
		For every fixed $h$, the function $\beta \mapsto F_N(\beta,h,p)$ is strictly convex, and for every fixed $\beta$, the function $h \mapsto F_N(\beta,h,p)$ is strictly convex. Consequently, the maps $u_{N,1}$ and $u_{N,p}$ defined in \eqref{eq:ubetah} are strictly increasing in both $\beta$ and $h$.
	\end{lem}
	
	\begin{proof}
		Let $\psi_N(\beta,h) := F_N(\beta,h,p) + N\log 2 = \log \sum_{\bs \in \sa_N} e^{N\beta \overline{\sigma}^p_N + Nh\os}$. Then for every $\beta_1, \beta_2, h$ and $\lambda \in (0,1)$, we have by H\"older's inequality,
		\begin{align*}
			\psi_N(\lambda \beta_1 + (1-\lambda)\beta_2,h)& =  \log \sum_{\bs \in \sa_N} e^{N\lambda (\beta_1 \overline{\sigma}^p_N + h\os)} e^{N(1-\lambda)(\beta_2 \overline{\sigma}^p_N + h \os)}\\& <  \log \left[\left(\sum_{\bs \in \sa_N} e^{N\beta_1 \overline{\sigma}^p_N + Nh\os} \right)^\lambda \left(\sum_{\bs \in \sa_N} e^{N\beta_2 \overline{\sigma}^p_N + Nh\os} \right)^{1-\lambda}\right]\\& =  \lambda \psi_N(\beta_1,h) + (1-\lambda)\psi_N(\beta_2,h).
		\end{align*}
		Similarly, for every $h_1,h_2,\beta$ and $\lambda \in (0,1)$, we have by H\"older's inequality,
		\begin{align*}
			\psi_N(\beta,\lambda h_1 + (1-\lambda)h_2)& =  \log \sum_{\bs \in \sa_N} e^{N\lambda (\beta \overline{\sigma}^p_N + h_1\os)} e^{N(1-\lambda)(\beta \overline{\sigma}^p_N + h_2 \os)}\\& <  \log \left[\left(\sum_{\bs \in \sa_N} e^{N\beta \overline{\sigma}^p_N + Nh_1\os} \right)^\lambda \left(\sum_{\bs \in \sa_N} e^{N\beta \overline{\sigma}^p_N + Nh_2\os} \right)^{1-\lambda}\right]\\& =  \lambda \psi_N(\beta,h_1) + (1-\lambda)\psi_N(\beta,h_2).
		\end{align*}
		This shows strict convexity of the functions $\beta \mapsto F_N(\beta,h,p)$ and $h \mapsto F_N(\beta,h,p)$. Now, note that $$\frac{\partial}{\partial \beta} F_N(\beta,h,p) = Nu_{N,p}(\beta,h,p)\quad\textrm{and}\quad \frac{\partial}{\partial h} F_N(\beta,h,p) = Nu_{N,1}(\beta,h,p).$$ Lemma \ref{increasing} now follows from the fact that the first derivative of a differentiable, strictly convex function is strictly increasing.
	\end{proof}
	
	Next, we show that for fixed $\beta$, the ML Estimate  of $h$ exists, and for fixed $h$, the ML Estimate  of $\beta$ exists, asymptotically almost surely. However, if $p$ is even, then the joint ML Estimate  of $(\beta,h)$ does not exist.
	
	\begin{lem}\label{mleexist}
		Fix $N \geq 1$. Then $\hat{\beta}_N$ and $\hat{h}_N$ exist in $[-\infty,\infty]$ and are unique.  Further, $\hat{h}_N $ exists in $\mathbb{R}$ if and only if $|\os| < 1$. For odd $p$, $\hat{\beta}_N$ exists in $\mathbb{R}$ if and only if $|\os| < 1$, and for even $p$, $\hat{\beta}_N$ exists in $\mathbb{R}$ if and only if $\os \notin \{-1,0,1\}$. Hence, $$\lim_{N \rightarrow \infty} \p_{\beta,h,p} \left(\hat{\beta}_N~\textrm{exists in}~\mathbb{R}\right) = \lim_{N \rightarrow \infty} \p_{\beta,h,p} \left(\hat{h}_N~\textrm{exists in}~\mathbb{R}\right) =1.$$ However, if $p$ is even, then for all  $N \geq 1$ and all $\bs \in \sa_N$, the joint ML Estimate  of $(\beta,h)$ does not exist.
	\end{lem}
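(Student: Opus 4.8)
\textbf{Proof plan for Lemma \ref{mleexist}.}

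The plan is to analyze the log-likelihood as a function of $(\beta,h)$ and exploit the fact that the model \eqref{cwwithmg} is a one-parameter exponential family in each coordinate (with the other fixed), so that the relevant log-partition functions $\beta \mapsto F_N(\beta,h,p)$ and $h \mapsto F_N(\beta,h,p)$ are strictly convex by Lemma \ref{increasing}. Writing the log-likelihood (up to additive constants) as $\ell_N(\beta,h) = N\beta \overline\sigma_N^p + Nh\overline\sigma_N - F_N(\beta,h,p)$, maximization over $h$ for fixed $\beta$ reduces to solving \eqref{eqmleh}, i.e. $u_{N,1}(\beta,h,p) = \overline\sigma_N$; since $u_{N,1}(\beta,\cdot,p)$ is continuous, strictly increasing (Lemma \ref{increasing}), with range equal to the open interval $(-1,1)$ (because $\overline\sigma_N$ takes values in $[-1,1]$ and $\e_{\beta,h,p}\overline\sigma_N$ tends to $\pm 1$ as $h\to\pm\infty$), there is a unique root in $\mathbb{R}$ precisely when $|\overline\sigma_N| < 1$, and the supremum is attained at $h = \pm\infty$ (in the extended sense) when $\overline\sigma_N = \pm 1$. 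This handles the existence/uniqueness claims for $\hat h_N$, and the analogous argument using \eqref{eqmle} and the monotonicity of $u_{N,p}(\cdot,h,p)$ handles $\hat\beta_N$: the only subtlety is that $x \mapsto x^p$ is a bijection on $[-1,1]$ when $p$ is odd but only on $[0,1]$ (resp. $[-1,0]$) when $p$ is even, so for even $p$ one gets $\overline\sigma_N^p \in [0,1]$, the map $\beta \mapsto u_{N,p}(\beta,h,p)$ has range $(0,1)$, and the equation $u_{N,p}(\beta,h,p) = \overline\sigma_N^p$ has a finite solution iff $\overline\sigma_N^p \in (0,1)$, i.e. iff $\overline\sigma_N \notin \{-1,0,1\}$; for odd $p$, $\overline\sigma_N^p$ ranges over $[-1,1]$ and one needs $\overline\sigma_N^p \in (-1,1)$, i.e. $|\overline\sigma_N| < 1$.

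Next I would deduce the two probability limits. Under $\p_{\beta,h,p}$, the exceptional events $\{|\overline\sigma_N| = 1\}$ and $\{\overline\sigma_N = 0\}$ have probability tending to $0$: indeed $\p_{\beta,h,p}(\overline\sigma_N = \pm 1) \le 2\,e^{-cN}$ for a suitable $c>0$ (directly from the mass function, since $\binom{N}{0} = \binom{N}{N} = 1$ while $Z_N$ is of exponential order $e^{N\sup_x H(x)}$ with $\sup_x H \ge 0$ by Lemma \ref{derh11}), and $\p_{\beta,h,p}(\overline\sigma_N = 0)$ is at most the maximal atom of $\overline\sigma_N$, which is $O(N^{-1/2})$ by the concentration/approximation results (Lemma \ref{conc}, Lemma \ref{lem:multiple_max}, together with Stirling, Lemma \ref{stir}); in fact $\overline\sigma_N = 0$ is only possible when $N$ is even, and even then the point mass $\p(\overline\sigma_N = 0)$ is $O(N^{-1/2})$ in the regular/critical cases and $O(N^{-1/4})$ in the special case. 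Either way it vanishes, so $\p_{\beta,h,p}(\hat\beta_N \text{ and } \hat h_N \text{ exist in } \mathbb{R}) \to 1$.

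Finally, the nonexistence of the joint ML estimate for even $p$: here the point is that when $p$ is even, $\overline\sigma_N^p = |\overline\sigma_N|^p$, so the sufficient statistic is effectively one-dimensional ($\overline\sigma_N$ alone determines both $\overline\sigma_N^p$ and $\overline\sigma_N$, but the map $(\beta,h) \mapsto (\e\overline\sigma_N^p, \e\overline\sigma_N)$ is not onto a full-dimensional set — or more to the point, one can always increase the likelihood by sending $\beta \to +\infty$ while compensating with $h$). Concretely, for any fixed $\bs \in \sa_N$ with $m := \overline\sigma_N$, consider the curve $\beta \to +\infty$, $h = h(\beta)$ chosen so that the distribution concentrates on configurations with average magnetization $|m|$ (or with $\overline\sigma_N \in \{m, -m\}$ when $m \ne 0$): along such a curve $\ell_N(\beta, h(\beta)) = N\beta m^p + Nh(\beta)m - F_N(\beta,h(\beta),p)$ and one checks, using the approximation $F_N(\beta,h,p) = N\sup_{x}H_{\beta,h,p}(x) + O(\log N)$ from Lemma \ref{ex}/Lemma \ref{lm:condpart}, that this diverges to $+\infty$ (the "entropy penalty" $I(x)$ stays bounded while the energy term can be made arbitrarily favorable). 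Hence the supremum of the likelihood over $\Theta' := \mathbb{R}\times\mathbb{R}$ is never attained, for every $N$ and every $\bs$. I expect the main obstacle to be this last part: making the divergence argument rigorous requires a clean choice of the path $(\beta,h(\beta))$ and careful bookkeeping of the competing linear-in-$\beta$ and $F_N$ terms — in particular ruling out that $F_N$ grows as fast as the energy term — which is where the partition-function asymptotics from Appendix \ref{approx} and the properties of $H$ from Appendix \ref{sec:propH} do the real work. The other steps are essentially soft consequences of strict convexity, monotonicity (Lemma \ref{increasing}), and the surjectivity of the mean-value maps onto the appropriate open intervals.
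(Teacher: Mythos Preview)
Your treatment of the first several claims---existence and uniqueness of $\hat h_N$ and $\hat\beta_N$ in $[-\infty,\infty]$, the ``if and only if'' characterizations via the ranges of $u_{N,1}(\beta,\cdot,p)$ and $u_{N,p}(\cdot,h,p)$, and the probability limits---is essentially the paper's argument and is correct. (The paper phrases the probability limits via the weak convergence of $\os$ from Theorem \ref{cltintr1} rather than direct mass estimates, but your route works equally well.)

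The last part, however, has a genuine error. Your plan is to exhibit a path $(\beta,h(\beta))$ along which $\ell_N(\beta,h(\beta))\to+\infty$. But $\ell_N(\beta,h)=\log\p_{\beta,h,p}(\bs)$ is a log-probability and hence bounded above by $0$ for every $(\beta,h)$; it cannot diverge. In fact, along the very path you describe (choosing $h(\beta)$ so that the maximizer of $H_{\beta,h(\beta),p}$ equals $m=\os$), the approximation $F_N\approx N\sup_x H_{\beta,h,p}(x)=N(\beta m^p+h(\beta)m-I(m))$ gives $\ell_N(\beta,h(\beta))\approx NI(m)-N\log 2$, a constant in $\beta$. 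So the supremum of the likelihood is finite; the issue is that it is not \emph{attained}, and your argument does not address this.

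The paper's device is a one-line application of Jensen's inequality: if a joint MLE $(\hat\beta,\hat h)$ existed, both score equations \eqref{eqmle} and \eqref{eqmleh} would hold simultaneously at $(\hat\beta,\hat h)$, yielding
\[
\e_{\hat\beta,\hat h,p}\,\overline\sigma_N^{\,p}=\overline\sigma_N^{\,p}=\bigl(\e_{\hat\beta,\hat h,p}\,\os\bigr)^{p}.
\]
But for even $p$ the map $x\mapsto x^p$ is convex and non-affine on the support $\mathcal M_N$ of $\os$, and $\os$ is not almost surely constant under any $\p_{\beta,h,p}$; hence Jensen gives the strict inequality $\e\,\overline\sigma_N^{\,p}>(\e\,\os)^{p}$, a contradiction. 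This replaces your path-construction entirely and needs none of the partition-function asymptotics you anticipated invoking.
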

	\begin{proof}
		The log-likelihood function is given by
		$$\ell_p(\beta,h|\bs) = - N \log 2 + \beta N\overline{\sigma}^p_N + hN\os - F_N(\beta,h,p).$$ By Lemma \ref{increasing}, the functions $\beta \mapsto F_N(\beta,h,p)$ and $h \mapsto F_N(\beta,h,p)$ are strictly convex, and hence, the functions $\beta \mapsto \ell_p(\beta,h|\bs)$ and $h \mapsto \ell_p(\beta,h|\bs)$ are strictly concave. Consequently, $\beta \mapsto \ell_p(\beta,h|\bs)$ attains maximum at $\hat{\beta} \in \mathbb{R}$ if and only if $\frac{\partial}{\partial \beta} \ell_p(\beta,h|\bs)\Big|_{\beta = \hat{\beta}} = 0$, and $h \mapsto \ell_p(\beta,h|\bs)$ attains maximum at $\hat{h} \in \mathbb{R}$ if and only if $\frac{\partial}{\partial h} \ell_p(\beta,h|\bs)\Big|_{h = \hat{h}} = 0$. In those cases, $\hat{\beta}$ and $\hat{h}$ are the unique maximizers of $\ell_p(\beta,h|\bs)$ over $\beta \in \mathbb{R}$ and $h \in \mathbb{R}$, respectively. Now, the equations $\frac{\partial}{\partial \beta} \ell_p(\beta,h|\bs) = 0$ and $\frac{\partial}{\partial h} \ell_p(\beta,h|\bs) = 0$ are (respectively) equivalent to the equations
		\begin{equation}\label{mleeq1}
			\frac{\partial}{\partial \beta} F_N(\beta,h,p) = N\overline{\sigma}^p_N\quad\textrm{and}\quad \frac{\partial}{\partial h} F_N(\beta,h,p) = N\os.
		\end{equation}
		One can easily show that
		\begin{equation}\label{hinm}
			\lim_{h \rightarrow \infty} \frac{\partial}{\partial h} F_N(\beta,h,p) = N\quad\textrm{and}\quad \lim_{h \rightarrow -\infty} \frac{\partial}{\partial h} F_N(\beta,h,p) = -N.
		\end{equation}
		Similarly, if $p$ is odd, we have
		\begin{equation}\label{binm1}
			\lim_{\beta \rightarrow \infty} \frac{\partial}{\partial \beta} F_N(\beta,h,p) = N\quad\textrm{and}\quad \lim_{\beta \rightarrow -\infty} \frac{\partial}{\partial \beta} F_N(\beta,h,p) = -N.
		\end{equation}
		Finally, for even $p$, we have
		\begin{equation}\label{binm2}
			\lim_{\beta \rightarrow \infty} \frac{\partial}{\partial \beta} F_N(\beta,h,p) = N\quad\textrm{and}\quad \lim_{\beta \rightarrow -\infty} \frac{\partial}{\partial \beta} F_N(\beta,h,p) = 0.
		\end{equation}
		The existence and uniqueness of $\hat{h}_N$ and $\hat{\beta}_N$ in $[-\infty,\infty]$, and the necessary and sufficient conditions about the existence of $\hat{h}_N$ and $\hat{\beta}_N$ in $\mathbb{R}$ now follow from \eqref{mleeq1}, \eqref{hinm}, \eqref{binm1} and \eqref{binm2}, since the functions $h \mapsto \frac{\partial}{\partial h} F_N(\beta,h,p)$ and $\beta \mapsto \frac{\partial}{\partial \beta} F_N(\beta,h,p)$ are strictly increasing and continuous. 
		
		Next, we show that the ML estimates are real valued with probability (under $\beta,h$) going to $1$. Towards this, first note that under $\p_{\beta,h,p} $, $\os$ converges weakly to a discrete measure supported on the set of all global maximizers of $H_{\beta,h,p}$ (see Theorem \ref{cltun} (2)). Since $-1$ and $1$ are not global maximizers of $H_{\beta,h,p}$, it follows that $\p_{\beta,h,p} (|\os| = 1) \rightarrow 0$ as $N \rightarrow \infty$. If $h\neq 0$, then $0$ is not a global maximizer of $H_{\beta,h,p}$, so $\p_{\beta,h,p} (\os \in \{-1,0,1\}) \rightarrow 0$ as $N \rightarrow \infty$. Therefore, assume that $h=0$. By Stirling-type bounds, $$\p_{\beta,0,p}(\os = 0) = \frac{1}{2^N}\binom{N}{\frac{N}{2}}Z_{N}(\beta,0,p)^{-1}\bm{1}\{N~\textrm{is even}\} \leq \frac{e}{\pi \sqrt{N}},$$ 
		where the last inequality uses the fact that $F_N(\beta,0,p) \geq 0$ for all $\beta \geq 0$. Hence, $\p_{\beta,0,p}(\os = 0) \rightarrow 0$ as $N \rightarrow \infty$, completing the proof of the finiteness of $\hat{h}_N$ and $\hat{\beta}_N$ for all $\beta,h,p$.

		Finally, let $p$ be even and $N \geq 1$. If the joint ML Estimate  of $(\beta,h)$ exists, then by \eqref{eqmle} and \eqref{eqmleh}, we must have
		$\e_{\beta,h,p}  \overline{\sigma}^p_N = \left(\e_{\beta,h,p}  \os\right)^p$.
		Since each of the measures $\p_{\beta,h,p} $ has support $\mathcal{M}_N$ and the function $x \mapsto x^p$ is non-affine, convex on $\mathcal{M}_N$, we arrive at a contradiction to Jensen's inequality.
	\end{proof}

	\section{Properties of the Function $H$}\label{sec:Hlemmas} 
	
This section we prove various technical results about the function $H$. We start by analyzing the global and local maxima of the sequence of functions $H_{\beta_N,h_N,p}$, where $(\beta_N,h_N)\rightarrow (\beta,h)$.

	\begin{lem}\label{derh44}
		Suppose that $(\beta_N,h_N) \in [0,\infty)\times \mathbb{R}$ is a sequence converging to a point $(\beta,h) \in [0,\infty)\times \mathbb{R}$. Then, we have the following:
		\begin{enumerate}
			\item[$(1)$] Suppose that $(\beta,h)$ is a $p$-regular point, and let $m_*$ be the global maximizer of $H_{\beta,h,p}$. Then, for any sequence $(\beta_N,h_N) \in [0,\infty)\times \mathbb{R}$ converging to $(\beta,h)$, the function $H_{\beta_N,h_N,p}$ will have unique global maximizer $m_*(N)$ for all large $N$, and $m_*(N) \rightarrow m_*$ as $N \rightarrow \infty$. 
			\item[$(2)$]Let $m$ be a local maximizer of the function $H_{\beta,h,p}$, where the point $(\beta,h)$ is not $p$-special. Suppose that $(\beta_N,h_N) \in [0,\infty)\times \mathbb{R}$ is a sequence converging to $(\beta,h)$. Then for all large $N$, the function $H_{\beta_N,h_N,p}$ will have a local maximizer $m(N)$, such that $m(N) \rightarrow m$ as $N \rightarrow \infty$. Further, if $A\subseteq [-1,1]$ is a closed interval such that $m \in \textrm{int}(A)$ and $H_{\beta,h,p}(m) > H_{\beta,h,p}(x)$ for all $x \in A\setminus \{m\}$, then there exists $N_0\geq 1$, such that for all $N \geq N_0$, we have $H_N(m(N)) > H_N(x)$ for all $x \in A \setminus \{m(N)\}$.
		\end{enumerate}
	\end{lem}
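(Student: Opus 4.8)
The plan is to base everything on two soft ingredients together with Lemma \ref{derh11}(2). First, $H_{\beta_N,h_N,p}\to H_{\beta,h,p}$ uniformly on $[-1,1]$, since $\sup_{x\in[-1,1]}|H_{\beta_N,h_N,p}(x)-H_{\beta,h,p}(x)|\le|\beta_N-\beta|+|h_N-h|$. Second, on every compact subinterval $J\subset(-1,1)$ the first and second derivatives converge uniformly as well, because $H'_{\beta_N,h_N,p}(x)=\beta_N p x^{p-1}+h_N-\tanh^{-1}(x)$ and $H''_{\beta_N,h_N,p}(x)=\beta_N p(p-1)x^{p-2}-(1-x^2)^{-1}$ differ from their limits by $O(|\beta_N-\beta|+|h_N-h|)$ uniformly on $J$. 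Since Lemma \ref{derh11}(2) confines all local maximizers of each $H_{\beta_N,h_N,p}$ to the open interval $(-1,1)$, these facts let me transfer the local geometry near a maximizer of $H_{\beta,h,p}$ to $H_N:=H_{\beta_N,h_N,p}$.

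For part (1), I would use $p$-regularity ($m_*\in(-1,1)$, $H''_{\beta,h,p}(m_*)<0$, and $m_*$ the strict global maximizer). By compactness of $[-1,1]$, for every neighbourhood $U\ni m_*$ the gap $\eta_U:=H_{\beta,h,p}(m_*)-\sup_{[-1,1]\setminus U}H_{\beta,h,p}$ is positive, so uniform convergence forces every global maximizer of $H_N$ into $U$ for $N$ large; shrinking $U$ shows that any choice of global maximizer $m_*(N)$ satisfies $m_*(N)\to m_*$. For uniqueness I would fix $\delta_0>0$ with $[m_*-\delta_0,m_*+\delta_0]\subset(-1,1)$ and $H''_{\beta,h,p}<0$ on it; uniform convergence of second derivatives then makes $H_N$ strictly concave there for $N$ large, hence with at most one maximizer on that interval, and since all global maximizers of $H_N$ already lie inside $(m_*-\delta_0,m_*+\delta_0)$ for large $N$, $H_N$ has a unique global maximizer.

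For part (2), the first observation is that since $(\beta,h)$ is not $p$-special, Lemma \ref{derh22} (equivalence of $p$-special and $p$-locally special) rules out $H''_{\beta,h,p}(m)=0$ at the local maximizer $m$, and being a local maximum gives $H''_{\beta,h,p}(m)\le0$, so $H''_{\beta,h,p}(m)<0$. I would fix $\delta_0>0$ with $[m-\delta_0,m+\delta_0]\subseteq\mathrm{int}(A)$ and $H''_{\beta,h,p}<0$ on this interval; then $H_N$ is strictly concave on $[m-\delta_0,m+\delta_0]$ for $N$ large, hence has a unique critical point $m(N)$ there. Since $H'_{\beta,h,p}(m)=0$ with $H'_{\beta,h,p}$ strictly decreasing near $m$, for every $\delta\in(0,\delta_0]$ one has $H'_{\beta,h,p}(m-\delta)>0>H'_{\beta,h,p}(m+\delta)$, and uniform convergence of first derivatives gives the same strict inequalities for $H'_N$ for $N$ large, which pins $m(N)$ inside $(m-\delta,m+\delta)$; letting $\delta\downarrow0$ yields $m(N)\to m$, and $m(N)$ is an interior local maximizer of $H_N$.

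It then remains to upgrade $m(N)$ to the strict maximizer of $H_N$ over all of $A$. I would split $A=A_1\cup A_2$ with $A_1:=[m-\tfrac{\delta_0}{2},m+\tfrac{\delta_0}{2}]$ and $A_2:=A\setminus(m-\tfrac{\delta_0}{2},m+\tfrac{\delta_0}{2})$. On $A_1$, strict concavity of $H_N$ and $m(N)\in\mathrm{int}(A_1)$ give $H_N(m(N))>H_N(x)$ for all $x\in A_1\setminus\{m(N)\}$. On $A_2$, the hypothesis gives $\eta:=H_{\beta,h,p}(m)-\sup_{A_2}H_{\beta,h,p}>0$, while $H_N(m(N))\to H_{\beta,h,p}(m)$ and $\sup_{A_2}H_N\to\sup_{A_2}H_{\beta,h,p}$ by uniform convergence on $[-1,1]$, so $H_N(m(N))>\sup_{A_2}H_N$ for $N$ large. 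There is no deep obstacle: the one point that must be handled precisely is that the hypothesis ``$(\beta,h)$ not $p$-special'' is used exactly to force $H''_{\beta,h,p}(m)<0$ via Lemma \ref{derh22}, after which everything is compactness, joint continuity of $H_{\beta,h,p}$ and its derivatives in $(\beta,h,x)$, and the resulting uniform convergence; the only remaining care is keeping the strictly-concave window $[m-\delta_0,m+\delta_0]$, the interval $A$, and the various ``$N$ large'' thresholds mutually compatible, which the nested-interval argument above arranges.
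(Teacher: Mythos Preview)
Your proof is correct and largely parallels the paper's argument; the compactness-plus-uniform-convergence skeleton is identical for both parts, as is the use of Lemma~\ref{derh22} to secure $H''_{\beta,h,p}(m)<0$ in part~(2). The one noteworthy difference is in how uniqueness of the global maximizer in part~(1) is obtained: the paper argues that the set $\cR_p$ of $p$-regular points is open (by appealing to Lemma~\ref{derh33}, which characterizes the critical curve and its closure), so $(\beta_N,h_N)\in\cR_p$ for large $N$ and uniqueness is immediate from the definition. You instead localize all global maximizers of $H_N$ into a small window around $m_*$ and then use uniform convergence of $H''$ to make $H_N$ strictly concave there, giving uniqueness directly. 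Your route is more self-contained and avoids the rather involved Lemma~\ref{derh33}; the paper's route is shorter once that lemma is in hand. For the final clause of part~(2), the paper invokes Lemma~\ref{mest} to control $\sup_{A\setminus(m-\delta,m+\delta)}H$, whereas your direct gap argument on $A_2$ accomplishes the same thing without that auxiliary lemma.
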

	\begin{proof}[Proof of $(1)$] The set $\cR_p$ of all $p$-regular points is an open subset of $[0,\infty)\times \mathbb{R}$. To see this, note that $\cR_p^c$ is given by $\cp \bigcup \{(\check{\beta}_p,\check{h}_p)\}$ if $p$ is odd, and by $\cp \bigcup \{(\check{\beta}_p,\check{h}_p), (\check{\beta}_p,-\check{h}_p)\}$ if $p$ is even. By \cite[Lemma B.3]{cmp}, $\cR_p^c$ is a closed set in either case. Hence, the function $H_{\beta_N,h_N,p}$ will have unique global maximizer $m_*(N)$ for all large $N$. 
		
		To show that $m_*(N) \rightarrow m_*$, let $\{N_k\}_{k \geq 1}$ be a subsequence of the natural numbers. Then, $\{N_k\}_{k \geq 1}$ will have a further subsequence $\{N_{k_\ell}\}_{\ell \geq 1}$, such that $m_*(N_{k_\ell})$ converges to some $m' \in [-1,1]$. Since $H_{\beta_{N_{k_\ell}},h_{N_{k_\ell}},p}\left(m_*(N_{k_\ell})\right) \geq H_{\beta_{N_{k_\ell}},h_{N_{k_\ell}},p}(x)$ for all $x \in [-1,1]$, by taking limit as $\ell \rightarrow \infty$ on both sides, we have $H_{\beta,h,p}(m') \geq H_{\beta,h,p}(x)$ for all $x \in [-1,1]$, showing that $m'$ is a global maximizer of $H_{\beta,h,p}$. Since $m_*$ is the unique global maximizer of $H_{\beta,h,p}$, it follows that $m' = m_*$, completing the proof of (1).  
		
		\medskip
		
		\noindent\noindent\textit{Proof of} (2):~Let us denote $H_{\beta,h,p}$ by $H$ and $H_{\beta_N,h_N,p}$ by $H_N$. It is easy to show that there exists $M \geq 1$ odd, and points $-1=a_0<a_1<\ldots<a_M=1$, such that $H'$ is strictly decreasing on $[a_{2i},a_{2i+1}]$ and strictly increasing on $[a_{2i+1},a_{2i+2}]$ for all $0\leq i \leq \frac{M-1}{2}$. Hence, the local maximizer $m$ of $H$ lies in $(a_{2i},a_{2i+1})$ for some $0\leq i \leq \frac{M-1}{2}$. Since $H'(a_{2i}) > 0$ and $H'(a_{2i+1})<0$, we also have $H_N'(a_{2i}) > 0$ and $H_N'(a_{2i+1})<0$ for all large $N$, and hence $H_N'$ has a root $m(N) \in (a_{2i},a_{2i+1})$ for all large $N$. 
		
		Let us now show that $m(N) \rightarrow m$. Towards this, let $\{N_k\}_{k\geq 1}$ be a subsequence of the natural numbers, whence there is a further subsequence $\{N_{k_\ell}\}_{\ell \geq 1}$ of $\{N_k\}_{k\geq 1}$, such that $m(N_{k_\ell}) \rightarrow m'$ for some $m' \in [a_{2i},a_{2i+1}]$. Since $H_{N_{k_\ell}}'(m(N_{k_\ell})) = 0$ for all $\ell \geq 1$, we have $H'(m') = 0$. But the strict decreasing nature of $H'$ on $[a_{2i},a_{2i+1}]$ implies that $m$ is the only root of $H'$ on this interval, and hence, $m'=m$. This shows that $m(N) \rightarrow m$. 
		
		Next, we show that $m(N)$ is a local maximizer of $H_N$ for all $N$ sufficiently large. For this, we prove something stronger than needed, because this will be useful in proving the last statement of (2). Since $H''(m) < 0$, there exists $\varepsilon > 0$ such that $[m-\varepsilon,m+\varepsilon] \subset (a_{2i},a_{2i+1})$ and $H''<0$ on $[m-\varepsilon,m+\varepsilon]$. If $m_0 \in [m-\varepsilon,m+\varepsilon]$ is such that $H''(m_0) = \sup_{x \in [m-\varepsilon,m+\varepsilon]} H''(x) < 0$, then since $H_N''$ converges to $H''$ uniformly on $(-1,1)$, $$\sup_{x\in [m-\varepsilon,m+\varepsilon]}H_N''(x) < H''(m_0)/2\quad\textrm{for all large}~N.$$ In particular, since $m(N) \in [m-\varepsilon,m+\varepsilon]$ for all large $N$, we have $H_N''(m(N)) < 0$ for all large $N$, showing that $m(N)$ is a local maximizer of $H_N$ for all large $N$. Also, since $H_N'(m(N)) = 0$ and $\sup_{x\in [m-\varepsilon,m+\varepsilon]}H_N''(x) < 0$ for all large $N$, we must have $$H_N(m(N)) > H_N(x)\quad\textrm{for all}~ x \in [m-\varepsilon,m+\varepsilon]\setminus\{m(N)\},\quad \textrm{for all large} N.$$   Finally, suppose that $A \subseteq [-1,1]$ is a closed interval such that $m \in \textrm{int}(A)$ and $H(m) > H(x)$ for all $x \in A\setminus \{m\}$. By  \cite[Lemma B.11]{cmp}, there exists $\varepsilon' >0 $ such that for all $0 < \delta \leq \varepsilon'$, $\sup_{x \in A\setminus (m-\delta,m+\delta)} H(x) = H(m\pm \delta)$. Let $\alpha = \min\{\varepsilon,\varepsilon'\}$. Then, 
		\begin{equation}\label{fs}
			H_N(m(N)) > H_N(x) \quad\textrm{for all} ~x \in [m-\alpha,m+\alpha]\setminus \{m(N)\},\quad\textrm{for all large}~ N,
		\end{equation}
		and $\sup_{x \in A\setminus (m-\alpha,m+\alpha)} H(x) = H(m\pm \alpha) < H(m)$ (since $H'(m) = 0$ and $H''< 0$ on $[m-\alpha,m+\alpha]$). Hence, 
		\begin{equation}\label{ss}
			\sup_{x \in A\setminus (m-\alpha,m+\alpha)} H_N(x) < H_N(m(N))\quad\textrm{for all large}~ N. 
		\end{equation}
		The proof of (2) now follows from \eqref{fs} and \eqref{ss}, and the proof of Lemma \ref{derh44} is now complete. 
	\end{proof}


	In the next lemma, we prove an asymptotic expansion of a local maximum value of the perturbed function $H_N$, around the corresponding local maximum value of the original function $H$. This is required in the proof of Lemma \ref{redunsup}.  
	
	\begin{lem}\label{bdhbest}
		Let $m$ be a local maximizer of $H := H_{\beta,p,h}$. Let $\beta_N := \beta + \bar{\beta}x_N$ and $h_N := h + \bar{h} y_N$ for some fixed constants $\beta,\bar{\beta},h,\bar{h}$ and sequences $x_N, y_N \rightarrow 0$. Suppose that the point $(\beta,h)$ is not $p$-special. Let $m(N)$ denote the local maximizer of $H_{\beta_N,h_N,p}$ converging to $m$. Then we have as $N \rightarrow \infty$,
		$$H_{\beta_N,h_N,p}(m(N)) = H(m) + \bar{\beta} x_N m^p + \bar{h} y_N m + O\left((x_N + y_N)^2\right).$$
	\end{lem}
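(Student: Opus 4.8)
\textbf{Proof plan for Lemma \ref{bdhbest}.}

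The plan is to establish the expansion via a two-step Taylor argument: first in the parameters $(\beta,h)$, and then in the location of the maximizer. First I would write
\[
H_{\beta_N,h_N,p}(m(N)) = H_{\beta_N,h_N,p}(m(N)) - H_{\beta,h,p}(m(N)) + H_{\beta,h,p}(m(N)).
\]
The first difference is exactly $(\beta_N-\beta)m(N)^p + (h_N-h)m(N) = \bar\beta x_N m(N)^p + \bar h y_N m(N)$, since $H_{\beta,h,p}(x)$ is affine in $(\beta,h)$ for fixed $x$. Since $m(N) \to m$ (by Lemma \ref{derh44}, using that $(\beta,h)$ is not $p$-special so $m$ is a nondegenerate local maximizer with $H''(m) < 0$), we have $m(N) = m + O(x_N + y_N)$; substituting this in gives $\bar\beta x_N m^p + \bar h y_N m + O((x_N+y_N)^2)$ for the parameter-difference term, as desired. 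So the remaining task is to show $H_{\beta,h,p}(m(N)) = H(m) + O((x_N+y_N)^2)$.

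For this second term, I would Taylor-expand $H = H_{\beta,h,p}$ around its local maximizer $m$: since $H'(m) = 0$ and $H''$ is bounded on a closed subinterval of $(-1,1)$ containing $m$ and $m(N)$ for all large $N$, Taylor's theorem gives
\[
H(m(N)) = H(m) + \tfrac{1}{2} H''(\xi_N)(m(N)-m)^2
\]
for some $\xi_N$ between $m$ and $m(N)$, hence $H(m(N)) = H(m) + O((m(N)-m)^2) = O((x_N+y_N)^2)$ once we know $m(N)-m = O(x_N+y_N)$. Combining the two steps yields the claimed expansion.

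The one genuine obstacle is establishing the rate $m(N) - m = O(x_N + y_N)$; mere convergence from Lemma \ref{derh44} is not quantitative enough. To get the rate I would apply the implicit function theorem to the stationarity equation $\partial_x H_{\underline\beta,\underline h,p}(x) = \underline\beta p x^{p-1} + \underline h - \tanh^{-1}(x) = 0$ near $(\beta,h,m)$: since $H_{\beta,h,p}''(m) < 0 \ne 0$, the implicit function theorem guarantees a $C^1$ (indeed $C^\infty$) solution $x = m(\underline\beta,\underline h)$ in a neighborhood, with partial derivatives given explicitly by Lemma \ref{derh2}, namely $\partial m/\partial\beta = -pm^{p-1}/H''(m)$ and $\partial m/\partial h = -1/H''(m)$, both bounded. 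A first-order Taylor expansion of $(\underline\beta,\underline h) \mapsto m(\underline\beta,\underline h)$ then gives $m(N) = m(\beta_N,h_N) = m + O(|\beta_N-\beta| + |h_N-h|) = m + O(x_N + y_N)$, and the local maximizer of $H_{\beta_N,h_N,p}$ produced by Lemma \ref{derh44} must coincide with this implicit-function branch for large $N$ (both converge to $m$ and, by the strict decreasing nature of $H'$ near $m$ used in the proof of Lemma \ref{derh44}, $H_{\beta_N,h_N,p}'$ has a unique root near $m$). Substituting this rate back into the two steps above completes the proof.
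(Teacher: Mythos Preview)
Your proposal is correct and follows essentially the same approach as the paper: both use Lemma~\ref{derh2} (equivalently, the implicit function theorem applied to the stationarity equation) to obtain the rate $m(N)-m = O(x_N+y_N)$, and then combine the exact affine dependence of $H_{\beta,h,p}(x)$ on $(\beta,h)$ with a second-order Taylor expansion at a stationary point. The only cosmetic difference is the decomposition: the paper writes $H_N(m(N)) - H(m) = [H_N(m(N)) - H_N(m)] + [H_N(m) - H(m)]$ and Taylor-expands $H_N$ around its critical point $m(N)$, whereas you write $H_N(m(N)) = [H_N(m(N)) - H(m(N))] + H(m(N))$ and Taylor-expand $H$ around its critical point $m$; either choice works.
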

	\begin{proof}
		For any sequence $({\beta}_N', {h}_N')  \rightarrow (\beta,h)$, let us denote by $m({\beta}_N', {h}_N' ,p)$ the local maximum of $H_{{\beta}_N',{h}_N',p}$ converging to $m$. In particular, $m(\beta_N,h_N,p) = m(N)$ and $m(\beta,h,p) = m$. By a simple application of Taylor's theorem and \cite[Lemma B.5]{cmp}, we have
		\begin{align}\label{ty1}
			m(N) - m & = m(\beta_N,h_N,p) - m(\beta_N,h,p) + m(\beta_N,h,p) - m(\beta,h,p)\nonumber\\
			&= -\frac{\bar{h} y_N}{H_{\beta_N,h,p}''(m(\beta_N,h,p))} -  \frac{\bar{\beta}pm^{p-1} x_N}{H''(m)} + O\left(x_N^2 + y_N^2\right)\nonumber\\
			&= O(x_N + y_N).
		\end{align}
		By another application of Taylor's theorem, we have
		\begin{align}\label{ty2}
			H_{\beta_N,h_N,p}(m(N)) - H(m) &  = H_{\beta_N,h_N,p}(m(N)) - H_{\beta_N,h_N,p}(m) + H_{\beta_N,h_N,p}(m) - H(m)\nonumber\\
			&= O\left((m(N)-m)^2\right) + \bar{\beta} x_N m^p + \bar{h}y_N m.
		\end{align}
		Lemma \ref{bdhbest} now follows from \eqref{ty1} and \eqref{ty2}.
	\end{proof}

The following lemma provides estimates of the first four derivatives of the function $H$ at the maximizer $m_*(N)$ for a perturbation of a $p$-special point. This key result is used in the proof of Lemma \ref{ex2}. 
	
	\begin{lem}\label{mnminusm}
		Let $(\beta,h)$ be a $p$-special point and $(\beta_N,h_N) := (\beta+\bar{\beta}N^{-\frac{3}{4}},h+\bar{h}N^{-\frac{3}{4}} )$ for some $\bar{\beta},\bar{h} \in \mathbb{R}$. If $m_*$ and $m_*(N)$ denote the unique global maximizers of $H := H_{\beta,h,p}$ and $H_N := H_{\beta_N,h_N,p}$ respectively, then we have the following:
		\begin{align} 
			\label{id1} N^\frac{1}{4}(m_*(N) - m_*) & = -\left(\frac{6(\bar{\beta}pm_*^{p-1} + \bar{h})}{H^{(4)}(m_*)}\right)^\frac{1}{3} + O\left(N^{-\frac{1}{4}}\right), \\
			\label{id2}	N^\frac{1}{2}H''(m_*(N)) & = \frac{1}{2}\left(6(\bar{\beta}pm_*^{p-1} + \bar{h})\right)^\frac{2}{3}\left(H^{(4)}(m_*)\right)^{\frac{1}{3}} + O\left(N^{-\frac{1}{4}}\right), \\
			\label{id3} N^\frac{1}{4}H^{(3)}(m_*(N)) & = -\left(6(\bar{\beta}pm_*^{p-1} + \bar{h})\right)^\frac{1}{3}\left(H^{(4)}(m_*)\right)^\frac{2}{3} + O\left(N^{-\frac{1}{4}}\right), \\
			\label{id4}
			H^{(4)}(m_*(N)) & = H^{(4)}(m_*) + O\left(N^{-\frac{1}{4}}\right).
		\end{align} 
	\end{lem}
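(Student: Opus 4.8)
\textbf{Proof strategy for Lemma \ref{mnminusm}.}
The plan is to exploit the fact that $m_*$ is a $p$-special maximizer, so that $H'(m_*)=H''(m_*)=H^{(3)}(m_*)=0$ and $H^{(4)}(m_*)<0$ (these are exactly the conclusions of Lemma \ref{derh22}), while $H_N$ differs from $H$ only through the lower-order terms $\bar\beta N^{-3/4}x^p + \bar h N^{-3/4}x$. Writing $\delta_N := m_*(N)-m_*$, the first and main task is to pin down the order of $\delta_N$. Since $m_*(N)$ is a stationary point of $H_N$, we have $H_N'(m_*(N))=0$, i.e.
\begin{align}\label{eq:stat_exp}
H'(m_*+\delta_N) + N^{-3/4}\bigl(\bar\beta p (m_*+\delta_N)^{p-1} + \bar h\bigr) = 0.
\end{align}
Taylor-expanding $H'$ about $m_*$ and using that the first three derivatives of $H$ vanish there, the leading term of $H'(m_*+\delta_N)$ is $\tfrac{1}{6}H^{(4)}(m_*)\delta_N^3$, while the perturbation term is $N^{-3/4}(\bar\beta p m_*^{p-1}+\bar h) + O(N^{-3/4}\delta_N)$. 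Balancing $\delta_N^3 \asymp N^{-3/4}$ forces $\delta_N = \Theta(N^{-1/4})$, and solving \eqref{eq:stat_exp} to leading order gives precisely \eqref{id1}: $\delta_N = -\bigl(6(\bar\beta p m_*^{p-1}+\bar h)/H^{(4)}(m_*)\bigr)^{1/3}N^{-1/4} + O(N^{-1/2})$. One subtlety to dispatch first: one must know that such a maximizer $m_*(N)\to m_*$ genuinely exists and is the global maximizer, which is guaranteed by Lemma \ref{derh44}(1) once we note a $p$-special point is not $p$-regular but the maximizer is still unique.

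Once \eqref{id1} is established, the remaining identities \eqref{id2}--\eqref{id4} follow by plain Taylor expansion of the respective derivatives of $H$ about $m_*$, substituting the expansion of $\delta_N$. Concretely, for \eqref{id4} we write $H^{(4)}(m_*(N)) = H^{(4)}(m_*) + H^{(5)}(\xi_N)\delta_N = H^{(4)}(m_*) + O(N^{-1/4})$ since $H^{(5)}$ is bounded near $m_*$ and $\delta_N = O(N^{-1/4})$. For \eqref{id3}, since $H^{(3)}(m_*)=0$, we get $H^{(3)}(m_*(N)) = H^{(4)}(m_*)\delta_N + \tfrac12 H^{(5)}(m_*)\delta_N^2 + \cdots = H^{(4)}(m_*)\delta_N + O(N^{-1/2})$; multiplying by $N^{1/4}$ and inserting \eqref{id1} yields the stated constant $-\bigl(6(\bar\beta p m_*^{p-1}+\bar h)\bigr)^{1/3}\bigl(H^{(4)}(m_*)\bigr)^{2/3}$. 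For \eqref{id2}, since $H''(m_*)=H^{(3)}(m_*)=0$, we get $H''(m_*(N)) = \tfrac12 H^{(4)}(m_*)\delta_N^2 + \tfrac16 H^{(5)}(m_*)\delta_N^3 + \cdots = \tfrac12 H^{(4)}(m_*)\delta_N^2 + O(N^{-3/4})$; multiplying by $N^{1/2}$ and squaring \eqref{id1} gives $\tfrac12 \bigl(6(\bar\beta p m_*^{p-1}+\bar h)\bigr)^{2/3}\bigl(H^{(4)}(m_*)\bigr)^{1/3}$ after simplifying the exponents, noting $H^{(4)}(m_*)\cdot \bigl(H^{(4)}(m_*)\bigr)^{-2/3} = \bigl(H^{(4)}(m_*)\bigr)^{1/3}$.

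The one point requiring care is the error bookkeeping in \eqref{id1}: to get the $O(N^{-1/4})$ error term (rather than merely $o(1)$) in $N^{1/4}\delta_N$, I would expand \eqref{eq:stat_exp} to one further order. Writing $\delta_N = c N^{-1/4} + r_N$ with $c := -\bigl(6(\bar\beta p m_*^{p-1}+\bar h)/H^{(4)}(m_*)\bigr)^{1/3}$, substitute into \eqref{eq:stat_exp}, cancel the $O(N^{-3/4})$ terms that define $c$, and observe that the leftover terms are $O(N^{-1})$ coming from (i) the $\delta_N^4$ term in the Taylor expansion of $H'$, i.e. $\tfrac{1}{24}H^{(5)}(m_*)\delta_N^4 = O(N^{-1})$, and (ii) the $N^{-3/4}\bar\beta p\bigl((m_*+\delta_N)^{p-1}-m_*^{p-1}\bigr) = O(N^{-1})$ term. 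Since the coefficient of the linear-in-$r_N$ part of \eqref{eq:stat_exp} is $\tfrac12 H^{(4)}(m_*)c^2 N^{-1/2}(1+o(1)) \ne 0$ (nonzero because $H^{(4)}(m_*)\ne0$ and $c\ne0$, assuming $\bar\beta p m_*^{p-1}+\bar h\ne 0$; the degenerate case is trivial since then $m_*(N)=m_*$), solving for $r_N$ gives $r_N = O(N^{-1}/N^{-1/2}) = O(N^{-1/2})$, hence $N^{1/4}r_N = O(N^{-1/4})$, which is \eqref{id1}. This error-propagation argument is the main (though still routine) obstacle; everything else is mechanical Taylor expansion using the vanishing of the low-order derivatives of $H$ at the special point and the uniform boundedness of $H^{(5)}$ on a neighborhood of $m_*$.
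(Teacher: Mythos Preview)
Your approach is essentially the same as the paper's: both set up the stationary equation $H_N'(m_*(N))=0$, Taylor-expand $H'$ about $m_*$ using $H'(m_*)=H''(m_*)=H^{(3)}(m_*)=0$, balance the cubic term against the $N^{-3/4}$ perturbation to obtain \eqref{id1}, and then read off \eqref{id2}--\eqref{id4} by further Taylor expansions of $H''$, $H^{(3)}$, $H^{(4)}$ about $m_*$.

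One small correction on the degenerate case $\bar\beta p m_*^{p-1}+\bar h = 0$: it is \emph{not} true in general that $m_*(N)=m_*$, since $H_N''(m_*)=N^{-3/4}\bar\beta p(p-1)m_*^{p-2}$ can be strictly positive, so $m_*$ need not even be a local maximizer of $H_N$. The paper avoids your linearize-in-$r_N$ step (which indeed breaks down when $c=0$) by first establishing $N^{1/4}\delta_N\to c$ via a Lagrange-remainder cubic expansion of $H'$, and then taking cube roots of the sharper identity $(N^{1/4}\delta_N)^3 = c + O(N^{-1/4})$ obtained from one more Taylor term; this gives \eqref{id1} directly whenever $c\ne 0$.
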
 
	
	\begin{proof}
		Let us start by noting that $$H'(m_*(N)) = H_N'(m_*(N)) - (\bar{\beta}pm_*(N)^{p-1} + \bar{h})N^{-\frac{3}{4}} = -(\bar{\beta} p m_*(N)^{p-1}+ \bar{h})N^{-\frac{3}{4}}.$$ On the other hand, by a Taylor expansion of $H'$ around $m_*$ and using the fact $H'(m_*) = H''(m_*) = H^{(3)}(m_*) = 0$ (see \cite[Lemma B.2]{cmp}), we have
		$$H'(m_*(N)) = \tfrac{1}{6}(m_*(N)-m_*)^3 H^{(4)}(\zeta_N),$$ where $\zeta_N$ lies between $m_*(N)$ and $m_*$. Hence, $$N^\frac{3}{4} (m_*(N)-m_*)^3 = -\frac{6(\bar{\beta}pm_*(N)^{p-1} + \bar{h})}{H^{(4)}(\zeta_N)}.$$	
		Now, it follows from the proof of Lemma \ref{derh44}, part (1), that $m_*(N) \rightarrow m_*$, and hence, $\zeta_N \rightarrow m_*$. This implies that 
		\begin{equation}\label{help1}
			\lim_{N \rightarrow \infty} N^{\tfrac{1}{4}} (m_*(N) - m_*) = -\left(\frac{6(\bar{\beta}pm_*^{p-1} + \bar{h})}{H^{(4)}(m_*)}\right)^\frac{1}{3} .
		\end{equation}
		By a $5$-term Taylor expansion of $H'(m_*(N))$ around $m_*$, one obtains
		\begin{equation}\label{addtayl}
			\tfrac{1}{6}(m_*(N)-m_*)^3 H^{(4)}(m_*) + \tfrac{1}{24}(m_*(N)-m_*)^4 H^{(5)}(\zeta_N') = -(\bar{\beta} p m_*(N)^{p-1}+ \bar{h})N^{-\frac{3}{4}}.
		\end{equation}
		for some sequence $\zeta_N'$ lying between $m_*(N)$ and $m_*$. From \eqref{addtayl} and \eqref{help1}, we have
		\begin{align}\label{addtaylor}
			N^\frac{3}{4} (m_*(N) - m_*)^3 & =  -\frac{6(\bar{\beta} p m_*(N)^{p-1}+ \bar{h})}{H^{(4)}(m_*)} - \frac{N^\frac{3}{4}(m_*(N) - m_*)^4 H^{(5)}(\zeta_N')}{4H^{(4)}(m_*)}\nonumber\\& =  -\frac{6(\bar{\beta} p m_*^{p-1}+ \bar{h})}{H^{(4)}(m_*)} + O\left(N^{-\frac{1}{4}}\right).
		\end{align}
		\eqref{id1} now follows from \eqref{addtaylor}, and \eqref{id2}, \eqref{id3}, \eqref{id4} follow by substituting \eqref{id1} into the following expansions
		\begin{equation*}
			H''(m_*(N)) = \tfrac{1}{2}\left(m_*(N) - m_*\right)^2 H^{(4)}(m_*) + O\left((m_*(N)-m_*)^3\right),
		\end{equation*}
		\begin{equation*}
			H^{(3)}(m_*(N)) = \left(m_*(N) - m_*\right)H^{(4)}(m_*) + O\left((m_*(N)-m_*)^2\right),
		\end{equation*}
		and 	$H^{(4)}(m_*(N))  = H^{(4)}(m_*) + O(m_*(N) - m_*)$. 
	\end{proof}
	
	The final lemma shows that if a function has non-vanishing curvature at a unique point of maxima, then for every sufficiently small open interval $I$ around that point of maxima, it attains its maximum on $I^c$ at either of the endpoints of $I$. This fact is used in the proofs of Lemmas \ref{conc} and \ref{lem:multiple_max}.


\begin{thebibliography}{10}
		
		
		\bibitem{structure_learning}
		Animashree Anandkumar, Vincent Y. F. Tan, Furong Huang, and Allan S. Willsky, High-dimensional structure estimation in Ising models: Local separation criterion, {\it The Annals of Statistics}, Vol. 40 (3), 1346--1375  Vol. 2012.
		
		
		\bibitem{spatial}
		Sudipto Banerjee, Bradley~P. Carlin, and Alan~E. Gelfand,
		\newblock Hierarchical modeling and analysis for spatial data,
		\newblock {\em Chapman and Hall/CRC, 2014}.
		
		
		\bibitem{ab_ferromagnetic_pspin} 
		Adriano Barra, Notes on ferromagnetic $p$-spin and REM, {\it Mathematical Methods in the Applied Sciences}, Vol. 32 (7), 783--797, 2009.
		
		
		\bibitem{besag_lattice}
		Julian Besag, Spatial interaction and the statistical analysis of lattice systems, {\it Journal of the Royal Statistical Society, (Series B)}, Vol. 36, 192--236, 1974.
		
		\bibitem{besag_nl}
		Julian Besag, Statistical analysis of non-lattice data, {\it The Statistician}, Vol. 24 (3), 179--195, 1975. 
		
		
		
		
		\bibitem{BM16}
		Bhaswar B. Bhattacharya and Sumit Mukherjee, Inference in ising models, {\it Bernoulli}, Vol. 24 (1), 493--525, 2018. 
		
		
		
		
		\bibitem{bresler}
		Guy Bresler,  Efficiently learning Ising models on arbitrary graphs, {\it Proceedings Symposium on Theory of Computing (STOC)}, 771--782, 2015.
		
		
		
\bibitem{gb_testing}
Guy Bresler and Dheeraj Nagaraj, Optimal single sample tests for structured versus unstructured
network data, {\it Conference On Learning Theory (COLT)}, 1657--1690, 2018.  


		
		\bibitem{high_tempferro}
		Yuan Cao, Matey Neykov and Han Liu, High temperature structure detection in ferromagnets, {\it Information and Inference: A Journal of the IMA}, Vol. 11 (1), 55--102, 2022. 
		
		
		\bibitem{chatterjee} 
		Sourav Chatterjee, Estimation in spin glasses: A first step,  {\it The Annals of Statistics}, Vol. 35 (5), 1931--1946, 2007. 
		
		
		\bibitem{cw_stein_nonnormal}
		Sourav Chatterjee and Qi-Man Shao, Non-normal approximation by Stein's method of exchangeable pairs with application to the Curie-Weiss model, {\it The Annals of Applied Probability}, Vol. 21 (2), 464--483, 2011.
		
		
		
		\bibitem{discrete_tree}
		C. Chow and C. Liu, {\it Approximating discrete probability distributions with dependence trees},  {\it IEEE Transactions on Information Theory}, Vol. 14~(3), 462--467, 1968.
		
		
		\bibitem{comets_exp}
		Francis Comets, On consistency of a class of estimators for exponential families of Markov random fields on the lattice,  {\it The Annals of Statistics}, Vol. 20 (1), 455--468, 1992.
		
		
		
		\bibitem{comets}
		Francis~Comets and Basilis~Gidas,
		\newblock Asymptotics of maximum likelihood estimators for the Curie-Weiss
		model,
		\newblock {\em The Annals of Statistics}, 19(2):557--578, 1991.
		
		
		\bibitem{cd_ising_estimation}
		Yuval Dagan, Constantinos Daskalakis, Nishant Dikkala, and Anthimos Vardis Kandiros, Learning Ising models from one or multiple samples, {\it Proceedings of the 53rd Annual ACM SIGACT Symposium on Theory of Computing  (STOC)}, 161--168, 2021. 	
		
		
		\bibitem{cd_testing}
		Constantinos Daskalakis, Nishant Dikkala, and Gautam Kamath, Testing Ising models, {\it IEEE Transactions on Information Theory}, Vol. 65 (11), 6829--6852, 2019.
		
		
		\bibitem{cd_ising_I}
		Constantinos Daskalakis, Nishant Dikkala, and Ioannis Panageas, Regression from dependent observations, {\it Proceedings of the 51st Annual ACM SIGACT Symposium on Theory of Computing (STOC)}, 881--889, 2019. 
		
		
		\bibitem{cd_ising_II}
		Constantinos Daskalakis, Nishant Dikkala, and Ioannis Panageas, Logistic regression with peer-group effects via inference in higher-order Ising models, {\it International Conference on Artificial Intelligence and Statistics (AISTATS)}, PMLR 3653--3663, 2020. 
		
		
		\bibitem{cd_trees}
		Constantinos Daskalakis, Elchanan Mossel, and Sebastien Roch, Evolutionary trees and the Ising model on the Bethe lattice: A proof of Steel's conjecture, {\it Probability Theory and Related Fields}, Vol. 149 (1), 149--189, 2011.
		
		
		\bibitem{dm_gibbs_notes}
		Amir Dembo and Andrea Montanari, Gibbs measures and phase transitions on sparse random graphs,  {\it Brazilian Journal of Probability and Statistics}, Vol. 24 (2), 137--211, 2010. 
		
	
		
		
		\bibitem{ellis_book}
		Richard~S.~Ellis, Entropy, Large Deviations, and Statistical Mechanics, {\it Springer}, 2007.  
		
		
		\bibitem{ellis}
		Richard~S. Ellis and Charles~M. Newman,
		\newblock The statistics of Curie-Weiss models,
		\newblock {\em Journal of Statistical Physics}, Vol. 19, 149--161, 1978.
		
		
		
		\bibitem{geman_graffinge}
		Stuart Geman and Christine Graffigne, Markov random field image models and their applications
		to computer vision, {\it  Proceedings of the International Congress of Mathematicians}, 1496--1517, 1986.
		
		
		\bibitem{pg_sm}
		Promit Ghosal and Sumit Mukherjee, Joint estimation of parameters in Ising model, {\it  The Annals of Statistics}, Vol. 48(2), 785--810, 2020. 
		
		
		\bibitem{gidas}
		Basilis Gidas, Consistency of maximum likelihood and pseudolikelihood estimators for Gibbs distributions, {\it In Stochastic  Differential  Systems, Stochastic Control Theory and Applications
			(W. Fleming and P.-L. Lions, eds.)}, 129--145, Springer, New York, 1988.
		
		
		\bibitem{disease}
		Peter~J. Green and Sylvia~Richardson,
		\newblock Hidden markov models and disease mapping,
		\newblock {\em Journal of the American Statistical Association}, 97:1055--1070,
		2002.
		
		
		\bibitem{guyon}
		Xavier Guyon and Hans R. K\"unsch, Asymptotic comparison of estimators in the Ising Model, {\it Stochastic Models, Statistical Methods, and Algorithms in Image Analysis}, Lecture Notes in Statistics, Vol, 74, 177--198, 1992. 
		
		
		
		\bibitem{graphical_models_algorithmic}
		Linus Hamilton, Frederic Koehler, and Ankur Moitra, Information theoretic properties of Markov Random Fields, and their algorithmic applications, {\it Advances in Neural Information Processing Systems (NIPS)}, 2463--2472, 2017.
		
		
		
		\bibitem{multispin_simulations} 
		J. R. Heringa, H. W. J. Bl{\"o}te, and A. Hoogland, Phase transitions in self-dual Ising models with multispin interactions and a field, {\it Physical Review Letters}, Vol. 63, 1546--1549, 1989. 
		
		
		
		\bibitem{neural}
		John J.~Hopfield,
		\newblock Neural networks and physical systems with emergent collective
		computational abilities,
		\newblock {\em Proceedings of the National Academy of Sciences}, 79:2554--2558, 1982.
		
		
		
		\bibitem{ising}
		Ernst Ising, Beitrag zur theorie der ferromagnetismus, {\it Zeitschrift f\"ur Physik}, Vol. 31, 253--258, 1925. 
		
		\bibitem{multiplicative}
		Adam Klivans and Raghu Meka, Learning graphical models using multiplicative weights, {\it Proceedings of the Annual Symposium on Foundations of Computer Science (FOCS)}, 343--354, 2017. 
		
		\bibitem{glauber_dynamics}
		David  A.  Levin,  Malwina J.  Luczak  and  Yuval  Peres, Glauber  dynamics  for  the  Mean-Field  Ising Model: cut-off, critical power law, and metastability, {\it Probability Theory and Related
			Fields}, Vol. 146 (1-2), 223--265, 2010.

		
		\bibitem{innovations}
		Andrea Montanari and Amin Saberi, The spread of innovations in social networks, {\it Proceedings
			of the National Academy of Sciences}, Vol. 107 (47), 20196--20201, 2010.
		
		
					
\bibitem{ising_testing}
Rajarshi Mukherjee and Gourab Ray, On testing for parameters in Ising model, {\it Annales de l'Institut Henri Poincar\'e, Probabilit\'es et Statistiques}, Vol. 58 (1), 164--187, 2022. 


		\bibitem{cmp}
		Somabha Mukherjee, Jaesung Son and Bhaswar B. Bhattacharya, Fluctuations of the magnetization in the $p$-spin
		Curie-Weiss model, {\it Communications in Mathematical Physics}, 387, 681--728, 2021.
		
		
		\bibitem{neykovliu_property}
		Matey Neykov and Han Liu, Property testing in high-dimensional Ising models, 
		{\it The Annals of Statistics}, Vol. 47 (5), 2472--2503, 2019. 
		
		
		\bibitem{pspinref1}
		Masaki~Ohkuwa, Hidetoshi~Nishimori, and Daniel~A. Lidar,
		\newblock Reverse annealing for the fully connected $p$-spin model,
		\newblock {\em Phys. Rev. A}, 98:022314, 2018.
		
		
		\bibitem{discrete_mrf_pickard}
		David K. Pickard, Inference for discrete Markov Fields: the simplest nontrivial case, {\it Journal of the American Statistical Association}, Vol. 82 (397), 90--96, 1987. 
		
		
		\bibitem{highdim_ising}
		Pradeep Ravikumar, Martin J. Wainwright and John D. Lafferty, High-dimensional Ising model selection using $\ell_1$-regularized logistic regression, {\it The Annals of Statistics}, Vol. 38 (3), 1287--1319, 2010.
		
		
		
		\bibitem{graphical_models_binary}
		Narayana P. Santhanam and Martin J. Wainwright, Information-theoretic limits of selecting binary
		graphical models in high dimensions, {\it IEEE Transactions on Information Theory}, Vol. 58 (7), 4117--4134, 2012. 
		
		
		\bibitem{ising_suzuki}
		Masuo Suzuki, Solution and critical behavior of some  ``Three-Dimensional" Ising Models with a four-spin interaction, {\it Physical Review Letters}, Vol. 28, 507--510, 1972. 
		
		
		\bibitem{ising_general} 
		Masuo Suzuki and Michael E. Fisher,  Zeros of the partition function for the Heisenberg, ferroelectric, and general Ising models, {\it Journal of Mathematical Physics}, Vol. 12 (2), 235--246, 1971.
		
		\bibitem{talagrand}
		Michel~Talagrand,
		\newblock {\em Spin Glasses: A Challenge for Mathematicians-Cavity and Mean
			Field Models},
		\newblock Springer, Berlin, 2003. 
		
		
		\bibitem{ferromagnetic_mean_field} 
		Thomas J\"org, Florent Krzakala,  Jorge Kurchan, A. C. Maggs, and Jos\'e Pujos, Energy gaps in quantum first-order mean-field--like transitions: The problems that quantum annealing cannot solve, {\it EPL (Europhysics Letters)}, Vol. 89 (4), 40004, 2010. 
		
		
		\bibitem{turban}
		Lo\"ic~Turban,
		\newblock One-dimensional ising model with multispin interactions,
		\newblock {\em Journal of Physics A: Mathematical and Theoretical}, 49(35),
		2016.
		
		
		
		
		
		\bibitem{ising_nonconcave}
		Lingzhou Xue, Hui Zou, and Tianxi Cai, Non-concave penalized composite conditional likelihood estimation of sparse Ising models, 
		{\it The Annals of Statistics}, Vol. 40 (3), 1403--1429, 2012. 
		
		
		\bibitem{pspinref2}
		Yu~Yamashiro, Masaki~Ohkuwa, Hidetoshi~Nishimori, and Daniel~A. Lidar,
		\newblock Dynamics of reverse annealing for the fully-connected $p$-spin model,
		\newblock {\em Phys. Rev. A}, 100:052321, 2019. 
		
		
		
	\end{thebibliography}
\end{document}